\newtheorem{theorem}{Theorem}[section]
\newtheorem{lemma}[theorem]{Lemma}
\theoremstyle{definition}
\theoremstyle{remark}
\numberwithin{equation}{section}
\newcommand{\Parans}[1]{\left(#1\right)}
\newcommand{\SBrackets}[1]{\left[#1\right]}
\newcommand{\PieceTwo}[4]
{
	\left\{
   	\begin{array}{ll}
      	#1 & #3 \\
       	#2 & #4
     	\end{array}
	\right.
}
\newcommand{\aqprod}[3]{\Parans{#1;#2}_{#3}}
\newcommand{\jacprod}[2]{\SBrackets{#1;#2}_{\infty}}
\author{CHRIS JENNINGS-SHAFFER}
\address{Department of Mathematics, University of Florida\\
Gainesville, Florida 32611, USA
\endgraf cjenningsshaffer@ufl.edu}
\keywords{Number theory, partitions, vector partitions, congruences, Bailey pairs}
\subjclass[2010]{Primary 11P81, 11P83}
\title{Two partition functions with congruences modulo 3, 5, 7, and 13}
\begin{document}
%\today

\allowdisplaybreaks

\begin{abstract}
We introduce two new integer partition functions, both of which
are the number of partition quadruples of $n$ with certain size restrictions.
We prove both functions satisfy Ramanujan-type
congruences modulo $3$, $5$, $7$, and $13$ by use of generalized Lambert series
identities and $q$-series techniques.
\end{abstract}

\maketitle

\section{Introduction}
%%%%%%%%%%%%%%%%%%%%%%%%%%%%%%%%%%%%%%%%%%%%%%%%%%%%%%%%%%%%%%%%%%%%%%
%%%%%%%%%%%%%%%%%%%%%%%%%%%%%%%%%%%%%%%%%%%%%%%%%%%%%%%%%%%%%%%%%%%%%%
%%%%%%%%%%%%%%%%%%%%%%%%%%%%%%%%%%%%%%%%%%%%%%%%%%%%%%%%%%%%%%%%%%%%%%
\allowdisplaybreaks

We recall a partition of a positive integer $n$ is a non-increasing
sequence of positive integers that sum to $n$. For example, the 
partitions of $5$ are $5$, $4+1$, $3+2$, $3+1+1$, $2+2+1$, $2+1+1+1$,
$1+1+1+1+1$. We let $p(n)$ denote the number of partitions of $n$. The function
$p(n)$ satisfies the well known congruences of Ramanujan
$p(5n+4)\equiv 0\pmod{5}$, $p(7n+5)\equiv 0\pmod{7}$, and 
$p(11n+6)\equiv 0\pmod{11}$. In this article we will consider two partition 
quadruples of $n$. We say a quadruple $(\pi_1,\pi_2,\pi_3,\pi_4)$ of partitions
is a partition quadruple of $n$ is altogether the parts of $\pi_1$, $\pi_2$,
$\pi_3$, and $\pi_4$ sum to $n$.

For a partition $\pi$, we let $s(\pi)$ denote the smallest part of $\pi$
and $\ell(\pi)$ denote the largest part of $\pi$. We use the conventions
that the empty partition has smallest part $\infty$ and largest part $0$. We 
let $u(n)$ denote the number of partition quadruples 
$(\pi_1,\pi_2,\pi_3,\pi_4)$ of $n$ such that $\pi_1$ is non-empty,
$s(\pi_1)\le s(\pi_2)$, $s(\pi_1)\le s(\pi_3)$, $s(\pi_1)\le s(\pi_4)$,
and $\ell(\pi_4)\le 2s(\pi_1)$.   
We let $v(n)$ denote the number of partition quadruples 
$(\pi_1,\pi_2,\pi_3,\pi_4)$ of $n$ such that the smallest part of $\pi_1$ 
appears at least twice,
$s(\pi_1)\le s(\pi_2)$, $s(\pi_1)\le s(\pi_3)$, $s(\pi_1)\le s(\pi_4)$,
and $\ell(\pi_4)\le 2s(\pi_1)$.   

We use the standard product notation,
\begin{align*}
	\aqprod{z}{q}{n} 
		&= \prod_{j=0}^{n-1} (1-zq^j)
	,
	&\aqprod{z}{q}{\infty} 
		&= \prod_{j=0}^\infty (1-zq^j)
	,\\
	\aqprod{z_1,\dots,z_k}{q}{n} 
		&= \aqprod{z_1}{q}{n}\dots\aqprod{z_k}{q}{n}
	,
	&\aqprod{z_1,\dots,z_k}{q}{\infty} 
		&= \aqprod{z_1}{q}{\infty}\dots\aqprod{z_k}{q}{\infty},
	\\
	\jacprod{z}{q} 
		&= \aqprod{z,q/z}{q}{\infty}
	,	
	&\jacprod{z_1,\dots,z_k}{q} &= \jacprod{z_1}{q}\dots\jacprod{z_k}{q}
.
\end{align*}
By summing according to $n$ being the smallest part of a partition, one easily
deduces that a generating function for $p(n)$ is given by
\begin{align*}
	F(q)
	&=
	\sum_{n=0}^\infty p(n)q^n
	=
	1+\sum_{n=1}^\infty \frac{q^n}{\aqprod{q^n}{q}{\infty}}
.
\end{align*}
Similarly, by summing according to $n$ being the smallest part of the
partition $\pi_1$, we find that generating functions for $u(n)$
and $v(n)$ are given by
\begin{align*}
	U(q)
	&=
	\sum_{n=0}^\infty u(n)q^n
	=
	\sum_{n=1}^\infty 
	\frac{q^n}{\aqprod{q^n}{q}{\infty}\aqprod{q^n}{q}
		{\infty}\aqprod{q^n}{q}{\infty}\aqprod{q^n}{q}{n+1}}
	,\\
	V(q)
	&=
	\sum_{n=0}^\infty v(n)q^n
	=
	\sum_{n=1}^\infty 
	\frac{q^{2n}}{\aqprod{q^n}{q}{\infty}\aqprod{q^n}{q}
		{\infty}\aqprod{q^n}{q}{\infty}\aqprod{q^n}{q}{n+1}}
.
\end{align*}

The main result of this article are the following congruences
for $u(n)$ and $v(n)$.
\begin{theorem}\label{TheoremCongruences}
\begin{align*}
	u(3n) &\equiv 0 \pmod{3}
	,\\	
	u(5n) &\equiv 0 \pmod{5}
	,\\	
	u(5n+3) &\equiv 0 \pmod{5}
	,\\	
	u(7n) &\equiv 0 \pmod{7}
	,\\	
	u(7n+5) &\equiv 0 \pmod{7}
	,\\	
	u(13n) &\equiv 0 \pmod{13}
	,\\	
	v(3n+1) &\equiv 0 \pmod{3}
	,\\	
	v(5n+1) &\equiv 0 \pmod{5}
	,\\	
	v(5n+4) &\equiv 0 \pmod{5}
	,\\	
	v(13n+10) &\equiv 0 \pmod{13}
.
\end{align*}
\end{theorem}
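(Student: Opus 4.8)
My plan is to convert the combinatorial generating functions into purely analytic $q$-series, evaluate those as generalized Lambert series, and then read off the congruences from mod-$\ell$ reductions. For the first step I would rewrite the $n$-dependent products using the elementary identities $\aqprod{q^n}{q}{\infty}=\aqprod{q}{q}{\infty}/\aqprod{q}{q}{n-1}$ and $\aqprod{q^n}{q}{n+1}=\aqprod{q}{q}{2n}/\aqprod{q}{q}{n-1}$, which collapse the triple infinite product together with the finite product into a single ratio and pull the only genuinely modular factor out front. Explicitly this gives
\[
U(q)=\frac{1}{\aqprod{q}{q}{\infty}^3}\sum_{n\ge1}\frac{q^n\aqprod{q}{q}{n-1}^4}{\aqprod{q}{q}{2n}},
\qquad
V(q)=\frac{1}{\aqprod{q}{q}{\infty}^3}\sum_{n\ge1}\frac{q^{2n}\aqprod{q}{q}{n-1}^4}{\aqprod{q}{q}{2n}},
\]
so both functions are the explicit eta-quotient prefactor $\aqprod{q}{q}{\infty}^{-3}$ times a one-dimensional $q$-hypergeometric sum. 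The shape of the summand, with the quadratically-indexed product $\aqprod{q}{q}{2n}$ in the denominator against the fourth power $\aqprod{q}{q}{n-1}^4$, is precisely what a Bailey-pair/Bailey-chain argument is designed to sum.

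The main step, and the principal obstacle, is to evaluate these two inner sums in closed form as generalized Lambert series. Here I would seek a Bailey pair $(\alpha_n,\beta_n)$ relative to $a=1$ whose iterate matches $\beta_n=\aqprod{q}{q}{n-1}^4/\aqprod{q}{q}{2n}$ up to elementary factors, and then apply the weak and strong forms of Bailey's lemma to convert each sum into a bilateral series. The intended output for each is a product of theta functions times a sum of the form $\sum(-1)^m q^{\text{quadratic}(m)}/(1-q^{\text{linear}(m)})^{e}$ with single or double poles, in analogy with the Atkin--Garvan and Andrews Lambert-series representations of the ordinary $\spt{}{}$-generating function. Pinning down the correct Bailey pair while honestly accounting for the truncation $\aqprod{q}{q}{n-1}$ (rather than $\aqprod{q}{q}{n}$) is the delicate point; I expect to need an auxiliary $q$-series identity or a limiting case of a ${}_2\phi_1$ or very-well-poised ${}_6\phi_5$ summation to push it through, and I would treat $U$ and $V$ in parallel since they differ only by the replacement of $q^n$ with $q^{2n}$ in the numerator.

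With the Lambert series representations in hand, the congruences become mod-$\ell$ dissection arguments. For $\ell=3$ I would use $\aqprod{q}{q}{\infty}^3\equiv\aqprod{q^3}{q^3}{\infty}\pmod 3$, so that modulo $3$ the prefactor is a power series in $q^3$; then $u(3n)\equiv0$ reduces to checking that the mod-$3$ reduction of the inner Lambert series carries no exponents divisible by $3$ (a computation already visible in the first few coefficients, where the $q^3$-coefficient of the inner sum is exactly $0$). For $\ell=5,7,13$ I would peel off factors of $\aqprod{q}{q}{\infty}$ until reaching the power $\ell$, applying $\aqprod{q}{q}{\infty}^\ell\equiv\aqprod{q^\ell}{q^\ell}{\infty}\pmod\ell$ to leave a theta/eta factor of known residue support; convolving that support modulo $\ell$ with the residue support of the reduced Lambert series should vacate exactly the arithmetic progressions named in the theorem, and each listed congruence then amounts to verifying that the relevant residue class is unoccupied. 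The finer residue bookkeeping for $\ell=13$, and the interplay between the leftover factor $\aqprod{q}{q}{\infty}^{\ell-3}$ and the dissected Lambert series for $\ell=5,7$, is where I expect the computations to become heaviest, but once the closed-form Lambert series of the second paragraph are established these remain routine verifications.
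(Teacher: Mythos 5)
Your first two steps match the paper's own route: the rewriting $U(q)=\aqprod{q}{q}{\infty}^{-3}\sum_{n\ge1}q^n\aqprod{q}{q}{n-1}^4/\aqprod{q}{q}{2n}$ (and the same with $q^{2n}$ for $V$) is exactly the paper's starting point, and Bailey's lemma is indeed how the paper converts it to a Lambert-type series. But two gaps remain, one fillable and one fatal as written. The fillable one: no single Bailey pair with $\beta_n=\aqprod{q}{q}{n-1}^4/\aqprod{q}{q}{2n}$ is produced, and the paper does not look for one. Instead it takes the finite Jacobi triple product pair $\beta_n'(x)=\aqprod{xq,x^{-1}}{q}{n}/\aqprod{q}{q}{2n}$, differentiates \emph{twice in $x$} at $x=1$ and at $x=q^{-1}$ to get pairs with $\beta_n=\aqprod{q}{q}{n-1}^2/\aqprod{q}{q}{2n}$ and $q^n\aqprod{q}{q}{n-1}^2/\aqprod{q}{q}{2n}$, and obtains the remaining two powers of $\aqprod{q}{q}{n-1}$ from the specialization $\rho_1=z$, $\rho_2=z^{-1}$, $z\to1$ in Bailey's lemma. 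The output is $U(q)=\frac{-1}{2}\aqprod{q}{q}{\infty}^{-3}\sideset{}{'}\sum_{n}(-1)^nq^{n(n+1)/2}n(n+1)/(1-q^n)^2$ (with $n(n-1)$ for $V$); note the polynomial factors $n(n\pm1)$ produced by differentiation, so these are not plain Appell--Lerch sums and a limiting ${}_2\phi_1$ or ${}_6\phi_5$ will not reach them.

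The fatal gap is in your third paragraph, which is where all of the paper's work actually lives. Verifying that "the $q^3$-coefficient of the inner sum is exactly $0$" for a few coefficients proves nothing about an infinite family of congruences, and the assertion that convolving residue supports "should vacate exactly the arithmetic progressions" presupposes that the mod-$\ell$ reduction of the Lambert series is supported on a few residue classes --- which is simply not visible from the series itself: after the reduction $1/(1-q^n)^2\equiv\sum_{k=0}^{\ell-2}(k+1)q^{nk}/(1-q^{\ell n})\pmod{\ell}$, the resulting series $S_\ell(b)$ have exponents in every class modulo $\ell$. What you are missing is precisely the paper's Lemmas \ref{MainLemmaForSSeries} and \ref{SecondLemmaForSSeries}: after substituting $n\mapsto\ell n+k+c$, the $\ell$ different shifted Lambert series $T(\ell k+\ell c,\ell k,\ell^2)$ are collapsed, using Chan's two-variable identity (\ref{ChanIdent1}) together with Jacobi's identity for $\aqprod{q}{q}{\infty}^3$, into a \emph{single} series $T(\cdot,\cdot,\ell^2)$ supported on one residue class plus explicit theta quotients. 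Even then, the congruences only become readable after the eta prefactor $1/E(1)^3\equiv E(1)^{\ell-3}/E(\ell)\pmod{\ell}$ is itself dissected (Berndt's $5$- and $7$-dissections of $E(1)$, the Bilgici--Ekin expansion of $E(1)^{10}$ modulo $13$, and the Molk--Tannery product identities), which is what yields Theorem \ref{TheoremMain}; Theorem \ref{TheoremCongruences} is then read off because the relevant residue classes are absent there. Without a substitute for those generalized Lambert series identities and eta dissections, your final paragraph restates the theorem rather than proving it.
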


To prove Theorem \ref{TheoremCongruences} we use $q$-series techniques
and identities between generalized Lambert series to completely determine
$U(q)$ and $V(q)$ modulo $\ell$ for $\ell=3$, $5$, $7$, and $13$. These results are stated
in the following Theorem. For brevity we 
use the notation $E(a)=\aqprod{q^a}{q^a}{\infty}$ and 
$P(a) = [q^{\ell a};q^{\ell^2}]_\infty$. That is to say, 
in the modulo 3 congruences $P(a) = [q^{3a};q^9]_\infty$, 
in the modulo 5 congruences $P(a) = [q^{5a};q^{25}]_\infty$,
in the modulo 7 congruences $P(a) = [q^{7a};q^{49}]_\infty$, and
in the modulo 13 congruences $P(a) = [q^{13a};q^{169}]_\infty$.
We format the modulo 13 congruence differently from the other congruences due
to the large number of terms.
\begin{theorem}\label{TheoremMain}
\begin{align}
	\label{EqTheoremUMod3}
	U(q)
	&\equiv
		\frac{qE(9)^2}{E(3)P(1)}
		+
		\frac{2q^2}{E(9)P(1)} 
		\sum_{n=-\infty}^\infty 
			\frac{(-1)^n q^{\frac{9n^2+15n}{2}}  }{1-q^{9n+3}}
	\pmod{3}
	%checked in maple
	,\\
	\label{EqTheoremVMod3}
	V(q)
	&\equiv
		\frac{2q^3}{E(9)P(1) }
 		\sum_{n=-\infty}^\infty 
			\frac{(-1)^n q^{\frac{9n^2+15n}{2}}  }{1-q^{9n+6}}
		+
		\frac{q^2}{E(9)P(1)}
		\sum_{n=-\infty}^\infty 
			\frac{(-1)^n q^{\frac{9n^2+15n}{2}} }{1-q^{9n+3}}
	\pmod{3}
	%checked in maple
	,\\
	\label{EqTheoremUMod5}
	U(q)
	&\equiv
		\frac{qE(25)^2P(2)}{E(5)P(1)}	
		+
		\frac{4q^2}{E(25) P(1)} 
			\sum_{n=-\infty}^\infty \frac{(-1)^n q^{\frac{25n^2+35n}{2}} }{(1-q^{25n+5})}
		+
		\frac{q^2E(25)^2}{E(5)}	
		+
		\frac{4q^4}{E(25)P(1)}	
			\sum_{n=-\infty}^\infty \frac{(-1)^n q^{\frac{25n^2+35n}{2}} }{(1-q^{25n+10})}
	\nonumber\\&\quad	
	\pmod{5}
	%checked in maple
	,\\
	\label{EqTheoremVMod5}
	V(q)	
	&\equiv
		\frac{4q^5}{E(25)P(1) }
			\sum_{n=-\infty}^\infty \frac{(-1)^n q^{\frac{25n^2+35n}{2}} }{(1-q^{25n+15})}
		+
		\frac{q^2}{E(25)P(1)} 
			\sum_{n=-\infty}^\infty \frac{(-1)^n q^{\frac{25n^2+35n}{2}} }{(1-q^{25n+5})}
		+	
		\frac{4q^3 E(25)^2 P(1)}
			{E(5)P(2)}
	\pmod{5}
	%checked in maple
	,\\
	\label{EqTheoremUMod7}
	U(q)
	&\equiv	
		\frac{5q}{E(49)P(1)} 
			\sum_{n=-\infty}^\infty
			\frac{(-1)^n q^{\frac{49n^2+63n}{2} }}{1-q^{49n+7}}
		+
		\frac{3q E(49)^4P(2)^2 P(3)}{E(7) P(1)^3}
		+\frac{4q^8 E(49)^4P(2)^3}{E(7) P(1) P(3)^2}
		+\frac{3q^8 E(49)^4P(1) P(3)^2}{E(7)  P(2)^3}		
		\nonumber\\&\quad
		+
		\frac{4q^2E(49)^4P(3)^2}{E(7) P(1)^2}
		+\frac{q^2 E(49)^4P(2)^3}{E(7) P(1)^3}
		+\frac{q^9 E(49)^4P(1) P(3)}{E(7) P(2)^2}
		+\frac{2q^9 E(49)^4P(2)}{E(7) P(3)}
		+
		\frac{3q^3 E(49)^4P(2) P(3)}{E(7) P(1)^2}
		\nonumber\\&\quad				
		+\frac{4q^3 E(49)^4P(2)^4}{E(7) P(1)^3 P(3)}
		+\frac{q^3 E(49)^4P(3)^3}{E(7) P(2)^2 P(1)}
		+\frac{4q^{10} E(49)^4P(1)}{E(7) P(2)}
		+\frac{5q^{10} E(49)^4P(2)^2}{E(7) P(3)^2}
		\nonumber\\&\quad						
		+
		\frac{2q^4}{E(49)P(1)}
			\sum_{n=-\infty}^\infty
			\frac{(-1)^n q^{\frac{49n^2+63n}{2} }}{1-q^{49n+14}}
		+
		\frac{4q^6}{E(49)P(1)} 
			\sum_{n=-\infty}^\infty
			\frac{(-1)^n q^{\frac{49n^2+63n}{2} }}{1-q^{49n+21}}
		+
		\frac{2q^6 E(49)^4P(3)^2}{E(7) P(2)^2}
		\nonumber\\&\quad		
		+\frac{q^6 E(49)^4P(2)}{E(7) P(1)}
		+\frac{4q^{13} E(49)^4P(1)^2}{E(7) P(2) P(3)}
		+\frac{6q^{13} E(49)^4P(1) P(2)^2}{E(7) P(3)^3}	
	\pmod{7}
	%checked in maple
	,\\	
	\label{EqTheoremVMod7}
	V(q)	
	&\equiv
		\frac{6 q^7 }{E(49)P(1)}
			\sum_{n=-\infty}^\infty
			\frac{(-1)^n q^{\frac{49n^2+63n}{2} }}{1-q^{49n+28}}
		+
		\frac{2 q }{E(49)P(1)}
			\sum_{n=-\infty}^\infty
			\frac{(-1)^n q^{\frac{49n^2+63n}{2} }}{1-q^{49n+7}}
		+
		\frac{5q E(49)^4 P(2)^2 P(3)}{E(7) P(1)^3}		
		\nonumber\\&\quad		
		+
		\frac{3q^8 E(49)^4 }{E(7) }
		+
		\frac{6q^8 E(49)^4 P(2)^3}{E(7) P(1) P(3)^2}
		+\frac{q^{15} E(49)^4 P(1)^3 }{E(7) P(3) P(2)^2}
		+\frac{q^2 E(49)^4 P(2)^3}{E(7) P(1)^3}
		+\frac{3q^9 E(49)^4 P(3) P(1)}{E(7) P(2)^2}		
		\nonumber\\&\quad		
		+\frac{q^9 E(49)^4 P(2)}{E(7) P(3)}
		+\frac{4q^3 E(49)^4  P(2)^4}{E(7) P(1)^3 P(3)}
		+\frac{5q^{10} E(49)^4 P(1)}{E(7) P(2)}
		+\frac{4q^{10} E(49)^4 P(2)^2}{E(7) P(3)^2}
		\nonumber\\&\quad				
		+
		\frac{q^4 }{E(49)P(1)}
			\sum_{n=-\infty}^\infty
			\frac{(-1)^n q^{\frac{49n^2+63n}{2} }}{1-q^{49n+14}}
		+\frac{4q^5 E(49)^4 P(3)}{E(7) P(1)}
		+\frac{5q^{12} E(49)^4 P(1)^2}{E(7) P(2)^2}
		+\frac{q^{12} E(49)^4 P(1) P(2)}{E(7) P(3)^2}
		\nonumber\\&\quad		
		+
		\frac{5 q^6 }{E(49) P(1)}
			\sum_{n=-\infty}^\infty
			\frac{(-1)^n q^{\frac{49n^2+63n}{2} }}{1-q^{49n+21}}
		+\frac{2q^6 E(49)^4 P(2)}{E(7) P(1)}
		+\frac{6q^{13} E(49)^4 P(1)^2}{E(7) P(2) P(3)}
		+\frac{4q^{13} E(49)^4 P(1) P(2)^2}{E(7) P(3)^3}
		\nonumber\\&\quad
	\pmod{7}	
	%checked in maple
	,\\
	\label{EqTheoremUMod13}
	U(q)
	&\equiv
		\frac{12q^3 }{E(169) P(1)}
		\sum_{n=-\infty}^\infty \frac{(-1)^nq^{\frac{169n^2+195n}{2}}}{1-q^{169n+39}}			
		+
		\frac{10q^{-8} }{E(169) P(1)}
		\sum_{n=-\infty}^\infty \frac{(-1)^nq^{\frac{169n^2+195n}{2}}}{1-q^{169n+13}}		
		\nonumber\\&\quad		
		+
		\frac{11q^7 }{E(169) P(1)}
		\sum_{n=-\infty}^\infty \frac{(-1)^nq^{\frac{169n^2+195n}{2}}}{1-q^{169n+52}}		
		+
		\frac{q^{10} }{E(169) P(1)}
		\sum_{n=-\infty}^\infty \frac{(-1)^nq^{\frac{169n^2+195n}{2}}}{1-q^{169n+65}}		
		\nonumber\\&\quad		
		+
		\frac{8q^{-2} }{E(169) P(1)}
		\sum_{n=-\infty}^\infty \frac{(-1)^nq^{\frac{169n^2+195n}{2}}}{1-q^{169n+26}}		
		+
		\frac{4q^{12} }{E(169) P(1)}
		\sum_{n=-\infty}^\infty \frac{(-1)^nq^{\frac{169n^2+195n}{2}}}{1-q^{169n+78}}		
		+
		A_{13}(q)
	\pmod{13}
	%checked in maple
	,\\
	\label{EqTheoremVMod13}
	V(q)
	&\equiv
		\frac{12 q^{13} }{E(169) P(1)}
		\sum_{n=-\infty}^\infty \frac{(-1)^nq^{\frac{169n^2+195n}{2}}}{1-q^{169n+91}}			
		+
		\frac{11 q^3 }{E(169) P(1)}
		\sum_{n=-\infty}^\infty \frac{(-1)^nq^{\frac{169n^2+195n}{2}}}{1-q^{169n+39}}			
		\nonumber\\&\quad
		+
		\frac{3  }{q^8 E(169) P(1)}
		\sum_{n=-\infty}^\infty \frac{(-1)^nq^{\frac{169n^2+195n}{2}}}{1-q^{169n+13}}			
		+
		\frac{12 q^7 }{E(169) P(1)}
		\sum_{n=-\infty}^\infty \frac{(-1)^nq^{\frac{169n^2+195n}{2}}}{1-q^{169n+52}}			
		\nonumber\\&\quad
		+
		\frac{9  }{q^2 E(169) P(1)}
		\sum_{n=-\infty}^\infty \frac{(-1)^nq^{\frac{169n^2+195n}{2}}}{1-q^{169n+26}}			
		+
		\frac{5 q^{12}  }{E(169) P(1)}
		\sum_{n=-\infty}^\infty \frac{(-1)^nq^{\frac{169n^2+195n}{2}}}{1-q^{169n+78}}			
		\nonumber\\&\quad		
		+
		B_{13}(q)
	\pmod{13}
	%checked in maple
	,
\end{align}
where
\begin{align*}
	A_{13}(q) 
	&=
		\frac{E(169)^4}{E(13)}
		\bigg(
		A_{13,0}(q^{13})+qA_{13,1}(q^{13})+q^2A_{13,2}(q^{13})+q^3A_{13,3}(q^{13})
		+q^4A_{13,4}(q^{13})
		\\&\quad		
		+q^5A_{13,5}(q^{13})
		+q^6A_{13,6}(q^{13})
		+q^7A_{13,7}(q^{13})+q^8A_{13,8}(q^{13})+q^9A_{13,9}(q^{13})
		+q^{10}A_{13,10}(q^{13})
		\\&\quad		
		+q^{11}A_{13,11}(q^{13})
		+q^{12}A_{13,12}(q^{13})
		\bigg)
	,\\
	B_{13}(q) &= 
		\frac{E(169)^4}{E(13)}
		\bigg(
		B_{13,0}(q^{13})+qB_{13,1}(q^{13})+q^2B_{13,2}(q^{13})+q^3B_{13,3}(q^{13})
		+q^4B_{13,4}(q^{13})
		\\&\quad		
		+q^5B_{13,5}(q^{13})
		+q^6B_{13,6}(q^{13})
		+q^7B_{13,7}(q^{13})+q^8B_{13,8}(q^{13})+q^9B_{13,9}(q^{13})
		+q^{10}B_{13,10}(q^{13})
		\\&\quad		
		+q^{11}B_{13,11}(q^{13})
		+q^{12}B_{13,12}(q^{13})
		\bigg)
	,\\
	A_{13,0}(q^{13})
	&=
		0
	,\\
	A_{13,1}(q^{13})
	&=
		\tfrac{P(3)^3 P(4)^6}{P(1)^2 P(5)^2 P(2) P(6)}
		+\tfrac{9q^{13} P(3)^5 P(4)^3}{P(1)^2 P(5)^2 P(6)}
		+\tfrac{10q^{26} P(3)^{10}}{P(1) P(5)^2 P(2)^2 P(6) P(4)}
		+\tfrac{6q^{26} P(3)^7 P(2)}{P(1)^2 P(5)^2 P(6)}		
		+\tfrac{6q^{39} P(3)^2 P(2)^5 P(4)}{P(1)^2 P(5)^2 P(6)}
		\\&\quad		
		+\tfrac{7q^{39} P(3)^5 P(2)^2}{P(1) P(5)^2 P(6)}
		+\tfrac{7q^{52} P(3)^3 P(2)^3}{P(5)^2 P(6)}
		+\tfrac{3q^{52} P(1) P(3)^6}{P(5)^2 P(6) P(4)}	
		+\tfrac{9q^{65} P(1) P(3) P(2)^4}{P(5)^2 P(6)}
		+\tfrac{12q^{65} P(1)^2 P(3)^4 P(2)}{P(5)^2 P(6) P(4)}
		\\&\quad		
		+\tfrac{9q^{78} P(1)^3 P(3)^2 P(2)^2}{P(5)^2 P(6) P(4)}
		+\tfrac{10q^{91} P(1)^6 P(4)}{P(5)^2 P(3) P(6)}	
		+\tfrac{9q^{91} P(1)^4 P(2)^3}{P(5)^2 P(6) P(4)}		
		+\tfrac{3q^{104} P(1)^8}{P(5)^2 P(2)^2 P(6)}
	,\\	
	A_{13,2}(q^{13})
	&=
		\tfrac{5 P(3)^3 P(4)^6}{P(1)^2 P(5) P(2) P(6)^2}	
		+\tfrac{10q^{13} P(3)^5 P(4)^3}{P(1)^2 P(5) P(6)^2}
		+\tfrac{8q^{26} P(3)^6 P(4)^2}{P(5) P(2)^2 P(6)^2}
		+\tfrac{q^{26} P(3)^7 P(2)}{P(1)^2 P(5) P(6)^2}
		+\tfrac{10q^{26} P(3)^{10}}{P(1) P(5) P(4) P(2)^2 P(6)^2}
		\\&\quad		
		+\tfrac{7q^{39} P(3)^2 P(4) P(2)^5}{P(1)^2 P(5) P(6)^2}
		+\tfrac{11q^{39} P(3)^5 P(2)^2}{P(1) P(5) P(6)^2}
		+\tfrac{6q^{52} P(4) P(2)^6}{P(1) P(5) P(6)^2}
		+\tfrac{9q^{52} P(3)^3 P(2)^3}{P(5) P(6)^2}
		+\tfrac{8q^{52} P(1) P(3)^6}{P(5) P(4) P(6)^2}
		\\&\quad		
		+\tfrac{3q^{65} P(1) P(3) P(2)^4}{P(5) P(6)^2}
		+\tfrac{11q^{65} P(1)^2 P(3)^4 P(2)}{P(5) P(4) P(6)^2}
		+\tfrac{12q^{78} P(1)^3 P(3)^2 P(2)^2}{P(5) P(4) P(6)^2}
		+\tfrac{10q^{91} P(1)^6 P(4)}{P(5) P(3) P(6)^2}
		+\tfrac{3q^{91} P(1)^4 P(2)^3}{P(5) P(4) P(6)^2}
		\\&\quad		
		+\tfrac{3q^{104} P(1)^8}{P(5) P(2)^2 P(6)^2}
	,\\	
	A_{13,3}(q^{13})
	&=
		\tfrac{3 P(4)^7 P(3)}{P(1)^2 P(5) P(6)^2}
		+\tfrac{8q^{13} P(4)^4 P(3)^3 P(2)}{P(1)^2 P(5) P(6)^2}
		+\tfrac{5q^{26} P(4)^3 P(3)^4}{P(5) P(6)^2 P(2)}
		+\tfrac{2q^{26} P(3)^8}{P(1) P(5) P(6)^2 P(2)}
		+\tfrac{10q^{39} P(4) P(3)^3 P(2)^3}{P(1) P(5) P(6)^2}
		\\&\quad		
		+\tfrac{10q^{39} P(3)^6}{P(5) P(6)^2}
		+\tfrac{11q^{52} P(4) P(3) P(2)^4}{P(5) P(6)^2}
		+\tfrac{5q^{52} P(1) P(3)^4 P(2)}{P(5) P(6)^2}
		+\tfrac{q^{52} P(3)^2 P(2)^7}{P(1)^2 P(4) P(5) P(6)^2}
		+\tfrac{9q^{65} P(1) P(4) P(2)^5}{P(5) P(6)^2 P(3)}
		\\&\quad		
		+\tfrac{5q^{65} P(1)^2 P(3)^2 P(2)^2}{P(5) P(6)^2}
		+\tfrac{12q^{65} P(2)^8}{P(1) P(4) P(5) P(6)^2}
		+\tfrac{10q^{78} P(1)^3 P(2)^3}{P(5) P(6)^2}
		+\tfrac{11q^{78} P(1)^4 P(3)^3}{P(4) P(5) P(6)^2}
		+\tfrac{5q^{91} P(1)^5 P(3) P(2)}{P(4) P(5) P(6)^2}
		\\&\quad		
		+\tfrac{10q^{104} P(1)^6 P(2)^2}{P(4) P(5) P(6)^2 P(3)}
	,\\	
	A_{13,4}(q^{13})
	&=
		\tfrac{5 P(4)^8 P(2)}{P(1)^2 P(5) P(6)^2 P(3)}	
		+\tfrac{4q^{13} P(4)^4 P(3)^4}{P(1) P(5) P(6)^2 P(2)}
		+\tfrac{4q^{26} P(4) P(3)^6}{P(1) P(5) P(6)^2}
		+\tfrac{5q^{39} P(4)^2 P(3) P(2)^4}{P(1) P(5) P(6)^2}
		+\tfrac{11q^{39} P(4) P(3)^4 P(2)}{P(5) P(6)^2}
		\\&\quad		
		+\tfrac{7q^{52} P(2)^7}{P(1) P(6)^2 P(3)}
		+\tfrac{11q^{52} P(1) P(4) P(3)^2 P(2)^2}{P(5) P(6)^2}
		+\tfrac{7q^{52} P(1)^2 P(3)^5}{P(5) P(6)^2 P(2)}
		+\tfrac{6q^{52} P(2)^8}{P(1)^2 P(5) P(6)^2}
		+\tfrac{4q^{52} P(3)^3 P(2)^5}{P(1) P(4) P(5) P(6)^2}
		\\&\quad		
		+\tfrac{5q^{65} P(1)^4 P(4)^3}{P(5) P(6)^2 P(3)}
		+\tfrac{2q^{65} P(1)^2 P(4) P(2)^3}{P(5) P(6)^2}
		+\tfrac{7q^{65} P(1)^3 P(3)^3}{P(5) P(6)^2}
		+\tfrac{9q^{65} P(3) P(2)^6}{P(4) P(5) P(6)^2}				
		+\tfrac{3q^{78} P(1)^4 P(3) P(2)}{P(5) P(6)^2}
		\\&\quad		
		+\tfrac{7q^{78} P(1) P(2)^7}{P(4) P(5) P(6)^2 P(3)}
		+\tfrac{7q^{91} P(1)^5 P(2)^2}{P(5) P(6)^2 P(3)}
		+\tfrac{6q^{104} P(1)^7}{P(4) P(5) P(6)^2}
	,\\	
	A_{13,5}(q^{13})
	&=
		\tfrac{3 P(4)^9}{q^{13} P(5)^2 P(6) P(1)^2  P(2)}		
		+\tfrac{11 P(4)^6 P(3)^2}{P(5)^2 P(6) P(1)^2}
		+\tfrac{8q^{13} P(4)^2 P(3)^7}{P(5)^2 P(6) P(1) P(2)^2}
		+\tfrac{11q^{26} P(4)^2 P(3)^5}{P(5)^2 P(6) P(2)}
		+\tfrac{q^{26}P(3)^6 P(2)^2}{P(5)^2 P(6) P(1)^2}
		\\&\quad		
		+\tfrac{3q^{26} P(3)^9}{P(5)^2 P(4) P(6) P(1) P(2)}
		+\tfrac{q^{39}P(4) P(3) P(2)^6}{P(5)^2 P(6) P(1)^2}
		+\tfrac{9q^{39} P(3)^4 P(2)^3}{P(5)^2 P(6) P(1)}
		+\tfrac{5q^{39} P(3)^7}{P(5)^2 P(4) P(6)}
		+\tfrac{12q^{52} P(4) P(2)^7}{P(5)^2 P(6) P(1) P(3)}
		\\&\quad		
		+\tfrac{12q^{52} P(3)^2 P(2)^4}{P(5)^2 P(6)}
		+\tfrac{10q^{65} P(1) P(2)^5}{P(5)^2 P(6)}
		+\tfrac{9q^{65} P(1)^2 P(3)^3 P(2)^2}{P(5)^2 P(4) P(6)}
		+\tfrac{4q^{78} P(1)^3 P(3) P(2)^3}{P(5)^2 P(4) P(6)}
		+\tfrac{12q^{91} P(1)^4 P(2)^4}{P(5)^2 P(4) P(6) P(3)}
		\\&\quad		
		+\tfrac{10q^{104} P(1)^8}{P(5)^2 P(6) P(3) P(2)}
	,\\	
	A_{13,6}(q^{13})
	&=
		\tfrac{5 P(3)^7 P(4)^3}{P(1)^2 P(6) P(5)^2 P(2)^2}
		+\tfrac{q^{13} P(3)^5 P(4)^3}{P(1) P(6) P(5)^2 P(2)}
		+\tfrac{8q^{13} P(3)^9}{P(1)^2 P(6) P(5)^2 P(2)}
		+\tfrac{5q^{26} P(3)^7}{P(1) P(6) P(5)^2}
		+\tfrac{9q^{39} P(1) P(3)^8}{P(4) P(6) P(5)^2 P(2)^2}
		\\&\quad		
		+\tfrac{7q^{39} P(3)^6 P(2)^4}{P(1)^2 P(4)^2 P(6) P(5)^2}
		+\tfrac{3q^{52} P(1) P(3)^3 P(2)^2}{P(6) P(5)^2}
		+\tfrac{3q^{52} P(1)^2 P(3)^6}{P(4) P(6) P(5)^2 P(2)}
		+\tfrac{4q^{52} P(3)^4 P(2)^5}{P(1) P(4)^2 P(6) P(5)^2}
		\\&\quad		
		+\tfrac{12q^{65} P(1)^2 P(3) P(2)^3}{P(6) P(5)^2}
		+\tfrac{12q^{65} P(1)^3 P(3)^4}{P(4) P(6) P(5)^2}
		+\tfrac{2q^{78} P(1)^3 P(2)^4}{P(3) P(6) P(5)^2}
		+\tfrac{2q^{78} P(1) P(2)^7}{P(4)^2 P(6) P(5)^2}
		+\tfrac{5q^{91} P(1)^5 P(2)^2}{P(4) P(6) P(5)^2}
		\\&\quad		
		+\tfrac{4q^{91} P(1)^6 P(3)^3}{P(4)^2 P(6) P(5)^2 P(2)}
		+\tfrac{6q^{104} P(1)^7 P(3)}{P(4)^2 P(6) P(5)^2}
	,\\	
	A_{13,7}(q^{13})
	&=
		\tfrac{8P(3)^4 P(4)^6}{P(1)^2 P(6)^2 P(5)^2 P(2)}
		+\tfrac{3q^{13} P(3)^6 P(4)^3}{P(1)^2 P(6)^2 P(5)^2}
		+\tfrac{7q^{26} P(3)^4 P(2) P(4)^3}{P(1) P(6)^2 P(5)^2}
		+\tfrac{2q^{26} P(3)^8 P(2)}{P(1)^2 P(6)^2 P(5)^2}
		+\tfrac{8q^{39} P(1) P(3)^5 P(4)^2}{P(6)^2 P(5)^2 P(2)}
		\\&\quad		
		+\tfrac{12q^{39} P(3)^3 P(2)^5 P(4)}{P(1)^2 P(6)^2 P(5)^2}
		+\tfrac{7q^{39} P(3)^6 P(2)^2}{P(1) P(6)^2 P(5)^2}
		+\tfrac{9q^{52} P(3) P(2)^6 P(4)}{P(1) P(6)^2 P(5)^2}
		+\tfrac{9q^{52} P(3)^4 P(2)^3}{P(6)^2 P(5)^2}
		+\tfrac{9q^{65} P(1) P(3)^2 P(2)^4}{P(6)^2 P(5)^2}
		\\&\quad	
		+\tfrac{5q^{65} P(1)^2 P(3)^5 P(2)}{P(6)^2 P(5)^2 P(4)}
		+\tfrac{6q^{78} P(1)^3 P(3)^3 P(2)^2}{P(6)^2 P(5)^2 P(4)}
		+\tfrac{12q^{91} P(1)^4 P(3) P(2)^3}{P(6)^2 P(5)^2 P(4)}
	,\\	
	A_{13,8}(q^{13})
	&=
		\tfrac{4 P(3)^6 P(4)^2}{P(1)^2 P(5) P(2) P(6)}
		+\tfrac{4q^{13} P(3)^4 P(4)^2}{P(1) P(5) P(6)}
		+\tfrac{9q^{13} P(3)^8}{P(1)^2 P(5) P(6) P(4)}
		+\tfrac{10q^{26} P(2) P(3)^2 P(4)^2}{P(5) P(6)}
		+\tfrac{3q^{26} P(2)^4 P(3)^3}{P(1)^2 P(5) P(6)}
		\\&\quad		
		+\tfrac{6q^{26} P(2) P(3)^6}{P(1) P(5) P(6) P(4)}
		+\tfrac{4q^{39} P(2)^4}{P(6)}
		+\tfrac{4q^{39} P(1)^2 P(3)^3 P(4)}{P(5) P(2) P(6)}
		+\tfrac{12q^{39} P(2)^5 P(3)}{P(1) P(5) P(6)}
		+\tfrac{4q^{39} P(2)^2 P(3)^4}{P(5) P(6) P(4)}
		\\&\quad		
		+\tfrac{8q^{52} P(1)^3 P(3) P(4)}{P(5) P(6)}
		+\tfrac{7q^{52} P(1) P(2)^3 P(3)^2}{P(5) P(6) P(4)}
		+\tfrac{11q^{65} P(1)^2 P(2)^4}{P(5) P(6) P(4)}
	,\\	
	A_{13,9}(q^{13})
	&=
		\tfrac{10 P(4)^2 P(3)^8}{P(1)^2 P(6) P(2)^2 P(5)^2}		
		+\tfrac{10q^{13} P(4)^2 P(3)^6}{P(1) P(6) P(2) P(5)^2}
		+\tfrac{3q^{13} P(3)^{10}}{P(1)^2 P(6) P(4) P(2) P(5)^2}
		+\tfrac{10q^{26} P(4)^2 P(3)^4}{P(6) P(5)^2}
		\\&\quad				
		+\tfrac{3q^{26} P(3)^8}{P(1) (6) P(4) P(5)^2}
		+\tfrac{11q^{39} P(2)^4 P(3)^3}{P(1) P(6) P(5)^2}
		+\tfrac{11q^{39} P(2)P(3)^6}{P(6) P(4) P(5)^2}
		+\tfrac{11q^{52} P(2)^5 P(3)}{P(6) P(5)^2}
		+\tfrac{9q^{52} P(1) P(2)^2 P(3)^4}{P(6) P(4) P(5)^2}
		\\&\quad		
		+\tfrac{7q^{52} P(2)^8 P(3)^2}{P(1)^2 P(6) P(4)^2 P(5)^2}
		+\tfrac{6q^{65} P(1) P(2)^6}{P(6) P(5)^2 P(3)}
		+\tfrac{10q^{65} P(1)^2 P(2)^3 P(3)^2}{P(6) P(4) P(5)^2}
		+\tfrac{10q^{65} P(1)^3 P(3)^5}{P(6) P(4)^2 P(5)^2}
		\\&\quad		
		+\tfrac{6q^{65} P(2)^9}{P(1) P(6) P(4)^2 P(5)^2}
		+\tfrac{5q^{78} P(1)^3 P(2)^4}{P(6) P(4) P(5)^2}			
		+\tfrac{3q^{78} P(1)^4 P(2) P(3)^3}{P(6) P(4)^2 P(5)^2}
		+\tfrac{2q^{91} P(1)^5 P(2)^2 P(3)}{P(6) P(4)^2 P(5)^2}
	,\\	
	A_{13,10}(q^{13})
	&=
		\tfrac{8P(4)^3 P(3)^6}{P(1)^2 P(2) P(6) P(5)^2}		
		+\tfrac{q^{13} P(4)^3 P(3)^4}{P(1) P(6) P(5)^2}
		+\tfrac{5q^{13} P(3)^8}{P(1)^2 P(6) P(5)^2}
		+\tfrac{2q^{26} P(2) P(3)^6}{P(1) P(6) P(5)^2}
		+\tfrac{7q^{39} P(2)^2 P(3)^4}{P(6) P(5)^2}
		\\&\quad		
		+\tfrac{7q^{39} P(1) P(3)^7}{P(4) P(2) P(6) P(5)^2}
		+\tfrac{10q^{39} P(2)^5 P(3)^5}{P(4)^2 P(1)^2 P(6) P(5)^2}
		+\tfrac{3q^{52} P(1) P(2)^3 P(3)^2}{P(6) P(5)^2}
		+\tfrac{7q^{52} P(1)^2 P(3)^5}{P(4) P(6) P(5)^2}
		\\&\quad		
		+\tfrac{12q^{52} P(2)^6 P(3)^3}{P(4)^2 P(1) P(6) P(5)^2}
		+\tfrac{5q^{65} P(1)^2 P(2)^4}{P(6) P(5)^2}
		+\tfrac{7q^{65} P(1)^3 P(2) P(3)^3}{P(4) P(6) P(5)^2}
		+\tfrac{4q^{65} P(2)^7 P(3)}{P(4)^2 P(6) P(5)^2}
		+\tfrac{10q^{78} P(1)^4 P(2)^2 P(3)}{P(4) P(6) P(5)^2}
		\\&\quad	
		+\tfrac{6q^{91} P(1)^6 P(3)^2}{P(4)^2 P(6) P(5)^2}
	,\\	
	A_{13,11}(q^{13})
	&=
		\tfrac{5 P(3) P(4)^5}{q^{13}P(1)^2  P(6)}
		+\tfrac{5q^{13} P(3)^5 P(4)}{P(1) P(5) P(6)}
		+\tfrac{12q^{26} P(2) P(3)^3 P(4)}{P(5) P(6)}
		+\tfrac{9q^{26} P(2)^4 P(3)^4}{P(1)^2 P(5) P(4) P(6)}
		+\tfrac{7q^{26} P(2) P(3)^7}{P(1) P(5) P(4)^2 P(6)}	
		\\&\quad		
		+\tfrac{6q^{39} P(2)^8}{P(3) P(1)^2 P(5) P(6)}
		+\tfrac{5q^{39} P(2)^5 P(3)^2}{P(1) P(5) P(4) P(6)}
		+\tfrac{3q^{39} P(2)^2 P(3)^5}{P(5) P(4)^2 P(6)}
		+\tfrac{6q^{52} P(3)^2 P(1)^3}{P(5) P(6)}
		+\tfrac{5q^{52} P(2)^3 P(3)^3 P(1)}{P(5) P(4)^2 P(6)}
		\\&\quad		
		+\tfrac{11q^{65} P(2)^4 P(3) P(1)^2}{P(5) P(4)^2 P(6)}
		+\tfrac{6q^{78} P(2)^5 P(1)^3}{P(3) P(5) P(4)^2 P(6)}
	,\\	
	A_{13,12}(q^{13})
	&=
		\tfrac{2P(4)^2 P(3)^5}{P(1)^2 P(5) P(6)}	
		+\tfrac{4q^{13} P(4) P(3)^6}{P(2)^2 P(5) P(6)}
		+\tfrac{11q^{13} P(3)^7 P(2)}{P(4) P(1)^2 P(5) P(6)}
		+\tfrac{5q^{26} P(2)^2 P(4)^2 P(3)}{P(5) P(6)}
		+\tfrac{4q^{26} P(4) P(3)^4 P(1)}{P(2) P(5) P(6)}
		\\&\quad		
		+\tfrac{4q^{26} P(3)^2 P(2)^5}{P(1)^2 P(5) P(6)}
		+\tfrac{5q^{26} P(3)^5 P(2)^2}{P(4) P(1) P(5) P(6)}
		+\tfrac{q^{39} P(2)^5}{P(3) P(6)}
		+\tfrac{10q^{39} P(4) P(3)^2 P(1)^2}{P(5) P(6)}
		+\tfrac{10q^{39} P(2)^6}{P(1) P(5) P(6)}
		\\&\quad		
		+\tfrac{12q^{39} P(3)^3 P(2)^3}{P(4) P(5) P(6)}
		+\tfrac{4q^{52} P(4) P(1)^3 P(2)}{P(5) P(6)}
		+\tfrac{9q^{52} P(3) P(1) P(2)^4}{P(4) P(5) P(6)}
		+\tfrac{9q^{65} P(3) P(1)^5}{P(2) P(5) P(6)}
		+\tfrac{q^{65} P(1)^2 P(2)^5}{P(4) P(3) P(5) P(6)}
	%Maple agrees with these A's
	,\\
	B_{13,0}(q^{13})
	&=
	 \tfrac{6q^{13} P(4)^5 P(3)^2 P(2) }{P(1)^2 P(6)^2 P(5)}
	 +\tfrac{7q^{26} P(4) P(3)^7 }{P(1) P(6)^2 P(2) P(5)}
	 +\tfrac{6q^{39} P(4) P(3)^5 }{P(6)^2 P(5)}
	 +\tfrac{10q^{39} P(3)^6 P(2)^3 }{P(4) P(1)^2 P(6)^2 P(5)}
	 +\tfrac{3q^{39} P(3)^9 }{P(4)^2 P(1) P(6)^2 P(5)}
	\\&\quad	 
	 +\tfrac{2q^{52} P(3) P(2)^7 }{P(1)^2 P(6)^2 P(5)}
	 +\tfrac{12q^{52} P(3)^4 P(2)^4 }{P(4) P(1) P(6)^2 P(5)}
	 +\tfrac{6q^{52} P(3)^7 P(2) }{P(4)^2 P(6)^2 P(5)}
	 +\tfrac{10q^{65} P(1) P(3)^5 P(2)^2 }{P(4)^2 P(6)^2 P(5)}
	 +\tfrac{12q^{78} P(1)^2 P(3)^3 P(2)^3 }{P(4)^2 P(6)^2 P(5)}
	\\&\quad
	 +\tfrac{7q^{91} P(1)^3 P(3) P(2)^4 }{P(4)^2 P(6)^2 P(5)}
	 +\tfrac{3q^{104} P(1)^6 P(2)^2 }{P(3)^2 P(6)^2 P(5)}
	 +\tfrac{3q^{104} P(1)^4 P(2)^5 }{P(4)^2 P(3) P(6)^2 P(5)}
	 +\tfrac{10q^{117} P(1)^8 }{P(4) P(3) P(6)^2 P(5)}
	,\\
	B_{13,1}(q^{13})
	&=
	 \tfrac{5q^{13} P(4)^3 P(3)^4 }{P(6) P(1) P(5) P(2)}
	 +\tfrac{9q^{26} P(4)^3 P(3)^2 }{P(6) P(5)}
	 +\tfrac{12q^{26} P(4) P(3)^3 P(2)^3 }{P(6) P(1)^2 P(5)}
	 +\tfrac{9q^{26} P(3)^6 }{P(6) P(1) P(5)}
	 +\tfrac{10q^{39} P(4) P(3) P(2)^4 }{P(6) P(1) P(5)}
	\\&\quad	 
	 +\tfrac{7q^{39} P(3)^4 P(2) }{P(6) P(5)}
	 +\tfrac{10q^{52} P(1) P(3)^2 P(2)^2 }{P(6) P(5)}
	 +\tfrac{4q^{65} P(1)^2 P(2)^3 }{P(6) P(5)}
	 +\tfrac{7q^{78} P(1)^4 P(3) P(2) }{P(6) P(4) P(5)}
	,\\
	B_{13,2}(q^{13})
	&=
	 \tfrac{P(3)^3 P(4)^6}{P(6)^2 P(2) P(1)^2 P(5)}
	 +\tfrac{q^{13}P(3)^5 P(4)^3 }{P(6)^2 P(1)^2 P(5)}
	 +\tfrac{7q^{26} P(3)^6 P(4)^2 }{P(6)^2 P(2)^2 P(5)}
	 +\tfrac{2q^{26} P(2) P(3)^7 }{P(6)^2 P(1)^2 P(5)}
	 +\tfrac{9q^{26} P(3)^{10} }{P(6)^2 P(2)^2 P(1) P(4) P(5)}
	\\&\quad	 
	 +\tfrac{11q^{39} P(2)^5 P(3)^2 P(4) }{P(6)^2 P(1)^2 P(5)}
	 +\tfrac{8q^{39} P(2)^2 P(3)^5 }{P(6)^2 P(1) P(5)}
	 +\tfrac{6q^{52} P(2)^6 P(4) }{P(6)^2 P(1) P(5)}
	 +\tfrac{6q^{52} P(2)^3 P(3)^3 }{P(6)^2 P(5)}
	 +\tfrac{10q^{52} P(1) P(3)^6 }{P(6)^2 P(4) P(5)}
	\\&\quad
	 +\tfrac{8q^{65} P(2)^4 P(1) P(3) }{P(6)^2 P(5)}
	 +\tfrac{q^{65}P(2) P(1)^2 P(3)^4 }{P(6)^2 P(4) P(5)}
	 +\tfrac{q^{78}P(2)^2 P(1)^3 P(3)^2 }{P(6)^2 P(4) P(5)}
	 +\tfrac{9q^{91} P(1)^6 P(4) }{P(6)^2 P(3) P(5)}
	 +\tfrac{2q^{91} P(2)^3 P(1)^4 }{P(6)^2 P(4) P(5)}
	\\&\quad
	 +\tfrac{4q^{104} P(1)^8 }{P(6)^2 P(2)^2 P(5)}
	,\\
	B_{13,3}(q^{13})
	&=
	 \tfrac{6 P(3) P(4)^7}{P(6)^2 P(1)^2 P(5)}
	 +\tfrac{3q^{13} P(2) P(3)^3 P(4)^4 }{P(6)^2 P(1)^2 P(5)}
	 +\tfrac{4q^{26} P(3)^8 }{P(2) P(6)^2 P(1) P(5)}
	 +\tfrac{10q^{26} P(3)^4 P(4)^3 }{P(2) P(6)^2 P(5)}
	 +\tfrac{7q^{39} P(2)^3 P(3)^3 P(4) }{P(6)^2 P(1) P(5)}
	\\&\quad	 
	 +\tfrac{7q^{39} P(3)^6 }{P(6)^2 P(5)}
	 +\tfrac{9q^{52} P(2)^4 P(3) P(4) }{P(6)^2 P(5)}
	 +\tfrac{10q^{52} P(2) P(3)^4 P(1) }{P(6)^2 P(5)}
	 +\tfrac{2q^{52} P(2)^7 P(3)^2 }{P(6)^2 P(1)^2 P(5) P(4)}
	 +\tfrac{5q^{65} P(2)^5 P(1) P(4) }{P(3) P(6)^2 P(5)}
	\\&\quad
	 +\tfrac{10q^{65} P(2)^2 P(3)^2 P(1)^2 }{P(6)^2 P(5)}
	 +\tfrac{11q^{65} P(2)^8 }{P(6)^2 P(1) P(5) P(4)}
	 +\tfrac{7q^{78} P(2)^3 P(1)^3 }{P(6)^2 P(5)}
	 +\tfrac{9q^{78} P(3)^3 P(1)^4 }{P(6)^2 P(5) P(4)}
	 +\tfrac{10q^{91} P(2) P(3) P(1)^5 }{P(6)^2 P(5) P(4)}
	 \\&\quad
	 +\tfrac{7q^{104} P(2)^2 P(1)^6 }{P(3) P(6)^2 P(5) P(4)}
	,\\
	B_{13,4}(q^{13})
	&=
	 \tfrac{2 P(2) P(4)^8}{P(3) P(6)^2 P(1)^2 P(5)}
	 +\tfrac{q^{13} P(3)^4 P(4)^4 }{P(2) P(6)^2 P(1) P(5)}
	 +\tfrac{4q^{26} P(3)^6 P(4) }{P(6)^2 P(1) P(5)}
	 +\tfrac{12q^{39} P(2) P(3)^4 P(4) }{P(6)^2 P(5)}
	 +\tfrac{6q^{39} P(2)^4 P(3)^5 }{P(6)^2 P(1)^2 P(5) P(4)}
	\\&\quad	 
	 +\tfrac{10q^{52} P(2)^2 P(3)^2 P(1) P(4) }{P(6)^2 P(5)}
	 +\tfrac{11q^{52} P(3)^5 P(1)^2 }{P(2) P(6)^2 P(5)}
	 +\tfrac{q^{52} P(2)^5 P(3)^3 }{P(6)^2 P(1) P(5) P(4)}
	 +\tfrac{12q^{65} P(2)^3 P(1)^2 P(4) }{P(6)^2 P(5)}
	\\&\quad
	 +\tfrac{10q^{65} P(2)^6 P(3) }{P(6)^2 P(5) P(4)}
	 +\tfrac{7q^{78} P(2)^4 P(1)^3 P(4) }{P(3)^2 P(6)^2 P(5)}
	 +\tfrac{2q^{78} P(2) P(3) P(1)^4 }{P(6)^2 P(5)}
	 +\tfrac{9q^{78} P(2)^7 P(1) }{P(3) P(6)^2 P(5) P(4)}
	 +\tfrac{4q^{91} P(2)^2 P(1)^5 }{P(3) P(6)^2 P(5)}
	\\&\quad
	 +\tfrac{2q^{104} P(1)^7 }{P(6)^2 P(5) P(4)}
	 +\tfrac{11q^{117} P(2) P(1)^8 }{P(3)^2 P(6)^2 P(5) P(4)}
	,\\
	B_{13,5}(q^{13})
	&=
	 \tfrac{10 P(4)^6 P(3)^3}{q^{13} P(6) P(1)^2 P(5) P(2)^2 }
	 +\tfrac{3 P(4)^3 P(3)^5}{P(6) P(1)^2 P(5) P(2)}
	 +\tfrac{2q^{13} P(4)^3 P(3)^3 }{P(6) P(1) P(5)}
	 +\tfrac{10q^{26} P(1) P(4)^2 P(3)^4 }{P(6) P(5) P(2)^2}
	\\&\quad	 
	 +\tfrac{12q^{26} P(4) P(3)^2 P(2)^4 }{P(6) P(1)^2 P(5)}
	 +\tfrac{12q^{26} P(3)^5 P(2) }{P(6) P(1) P(5)}
	 +\tfrac{5q^{39} P(4) P(2)^5 }{P(6) P(1) P(5)}
	 +\tfrac{11q^{39} P(3)^3 P(2)^2 }{P(6) P(5)}
	 +\tfrac{q^{52} P(1) P(3) P(2)^3 }{P(6) P(5)}
	\\&\quad
	 +\tfrac{3q^{52} P(1)^2 P(3)^4 }{P(6) P(4) P(5)}
	 +\tfrac{3q^{65} P(1)^3 P(3)^2 P(2) }{P(6) P(4) P(5)}
	 +\tfrac{8q^{78} P(1)^4 P(2)^2 }{P(6) P(4) P(5)}
	,\\
	B_{13,6}(q^{13})
	&=
	 \tfrac{P(3)^3 P(4)^4}{P(5) P(6) P(1)^2}
	 +\tfrac{7q^{13} P(3)^8 }{P(5) P(6) P(2)^2 P(1)}
	 +\tfrac{8q^{26} P(3)^6 }{P(5) P(6) P(2)}
	 +\tfrac{5q^{26} P(3)^7 P(2)^2 }{P(5) P(4)^2 P(6) P(1)^2}
	 +\tfrac{6q^{39} P(3)^2 P(2)^6 }{P(5) P(4) P(6) P(1)^2}
	\\&\quad	 
	 +\tfrac{12q^{39} P(3)^5 P(2)^3 }{P(5) P(4)^2 P(6) P(1)}
	 +\tfrac{5q^{39} P(3)^4 P(1) }{P(5) P(6)}
	 +\tfrac{3q^{52} P(2)^6 }{P(3) P(4) P(6)}
	 +\tfrac{9q^{52} P(3)^5 P(1)^3 }{P(5) P(4) P(6) P(2)^2}
	 +\tfrac{4q^{52} P(2)^7 }{P(5) P(4) P(6) P(1)}
	\\&\quad
	 +\tfrac{11q^{52} P(3)^3 P(2)^4 }{P(5) P(4)^2 P(6)}
	 +\tfrac{2q^{65} P(2)^2 P(1)^3 }{P(5) P(6)}
	 +\tfrac{2q^{65} P(3)^3 P(1)^4 }{P(5) P(4) P(6) P(2)}
	 +\tfrac{12q^{65} P(3) P(2)^5 P(1) }{P(5) P(4)^2 P(6)}
	 +\tfrac{12q^{78} P(2)^3 P(1)^4 }{P(5) P(3)^2 P(6)}
	\\&\quad	
	 +\tfrac{12q^{78} P(3) P(1)^5 }{P(5) P(4) P(6)}
	 +\tfrac{2q^{78} P(2)^6 P(1)^2 }{P(5) P(3) P(4)^2 P(6)}
	 +\tfrac{10q^{91} P(2) P(1)^6 }{P(5) P(3) P(4) P(6)}
	 +\tfrac{4q^{104} P(1)^8 }{P(5) P(4)^2 P(6) P(2)}
	,\\
	B_{13,7}(q^{13})
	&=
	 \tfrac{4 P(2) P(4)^7}{P(6)^2 P(1)^2 P(5)}
	 +\tfrac{8 q^{13} P(3)^5 P(4)^3}{P(6)^2 P(1) P(5) P(2)}
	 +\tfrac{3 q^{26} P(3)^4 P(2)^3 P(4)}{P(6)^2 P(1)^2 P(5)}
	 +\tfrac{11 q^{26} P(3)^7}{P(6)^2 P(1) P(5)}
	 +\tfrac{4 q^{39} P(3)^2 P(2)^4 P(4)}{P(6)^2 P(1) P(5)}
	\\&\quad	 
	 +\tfrac{6 q^{39} P(3)^5 P(2)}{P(6)^2 P(5)}
	 +\tfrac{10 q^{52} P(5) P(2)^6}{P(6)^2 P(3) P(4)}
	 +\tfrac{9 q^{52}P(1)  P(3)^3 P(2)^2}{P(6)^2 P(5)}
	 +\tfrac{3 q^{52} P(3) P(2)^8}{P(6)^2 P(1)^2 P(5) P(4)}
	 +\tfrac{4 q^{65}P(1)^4  P(4)^2}{P(6)^2 P(5)}
	\\&\quad
	 +\tfrac{6  q^{65} P(1)^2 P(3) P(2)^3}{P(6)^2 P(5)}
	 +\tfrac{10 q^{65} P(2)^9}{P(6)^2 P(1) P(3) P(5) P(4)}
	 +\tfrac{2 q^{78} P(1)^4  P(3)^2 P(2)}{P(6)^2 P(5) P(4)}
	 +\tfrac{10 q^{91} P(1)^5  P(2)^2}{P(6)^2 P(5) P(4)}
	,\\
	B_{13,8}(q^{13})
	&=
	 \tfrac{10 P(3)^6 P(4)^2}{P(1)^2 P(6) P(5) P(2)}
	 +\tfrac{3q^{13} P(3)^4 P(4)^2 }{P(1) P(6) P(5)}
	 +\tfrac{3q^{13} P(3)^8 }{P(1)^2 P(4) P(6) P(5)}
	 +\tfrac{8q^{26} P(3)^2 P(4)^2 P(2) }{P(6) P(5)}
	 +\tfrac{9q^{26} P(3)^3 P(2)^4 }{P(1)^2 P(6) P(5)}
	\\&\quad	 
	 +\tfrac{q^{26} P(3)^6 P(2) }{P(1) P(4) P(6) P(5)}
	 +\tfrac{12q^{39} P(2)^4 }{P(6)}
	 +\tfrac{10q^{39} P(1)^2 P(3)^3 P(4) }{P(6) P(5) P(2)}
	 +\tfrac{10q^{39} P(3) P(2)^5 }{P(1) P(6) P(5)}
	 +\tfrac{8q^{39} P(3)^4 P(2)^2 }{P(4) P(6) P(5)}
	\\&\quad
	 +\tfrac{q^{52} P(1)^3 P(3) P(4) }{P(6) P(5)}
	 +\tfrac{6q^{52} P(1) P(3)^2 P(2)^3 }{P(4) P(6) P(5)}
	 +\tfrac{2q^{65} P(1)^2 P(2)^4 }{P(4) P(6) P(5)}
	,\\
	B_{13,9}(q^{13})
	&=
	 \tfrac{8 P(3)^4 P(4)^3}{P(5) P(6) P(1)^2}
	 +\tfrac{10q^{13} P(3)^5 P(4)^2 }{P(5) P(6) P(2)^2}
	 +\tfrac{5q^{13} P(3)^9 }{P(5) P(6) P(1) P(4) P(2)^2}
	 +\tfrac{9q^{26} P(3)^7 }{P(5) P(6) P(4) P(2)}
	 +\tfrac{9q^{39} P(3)^2  P(2)^3}{P(5) P(6)}
	\\&\quad	 
	 +\tfrac{12q^{39} P(1) P(3)^5 }{P(5) P(6) P(4)}
	 +\tfrac{7q^{39} P(3)^3  P(2)^6}{P(5) P(6) P(1)^2 P(4)^2}
	 +\tfrac{4q^{52} P(1)  P(2)^4}{P(5) P(6)}
	 +\tfrac{9q^{52} P(1)^2 P(3)^3  P(2)}{P(5) P(6) P(4)}
	 +\tfrac{10q^{52} P(3)  P(2)^7}{P(5) P(6) P(1) P(4)^2}
	\\&\quad
	 +\tfrac{9q^{65} P(1)^2  P(2)^5}{P(5) P(6) P(3)^2}
	 +\tfrac{2q^{65} P(1)^3 P(3)  P(2)^2}{P(5) P(6) P(4)}
	 +\tfrac{8q^{65} P(1)^4 P(3)^4 }{P(5) P(6) P(4)^2 P(2)}
	 +\tfrac{9q^{65} P(2)^8}{P(5) P(6) P(3) P(4)^2}
	 +\tfrac{4q^{78} P(1)^4 P(2)^3}{P(5) P(6) P(3) P(4)}
	\\&\quad
	 +\tfrac{9q^{78} P(1)^5 P(3)^2 }{P(5) P(6) P(4)^2}
	 +\tfrac{8q^{91} P(1)^6  P(2)}{P(5) P(6) P(4)^2}
	,\\
	B_{13,10}(q^{13})
	&=
	0
	,\\
	B_{13,11}(q^{13})
	&=
	 \tfrac{4 P(3) P(4)^5}{q^{13} P(6) P(1)^2}
	 +\tfrac{4 q^{13} P(3)^5 P(4)}{P(6) P(1) P(5)}
	 +\tfrac{7 q^{26} P(3)^3 P(2) P(4)}{P(6) P(5)}
	 +\tfrac{2 q^{26} P(3)^4 P(2)^4}{P(6) P(1)^2 P(4) P(5)}
	 +\tfrac{3 q^{26} P(3)^7 P(2)}{P(6) P(1) P(4)^2 P(5)}
	\\&\quad	 
	 +\tfrac{10 q^{39} P(2)^8}{P(6) P(1)^2 P(3) P(5)}
	 +\tfrac{4 q^{39} P(3)^2 P(2)^5}{P(6) P(1) P(4) P(5)}
	 +\tfrac{5 q^{39} P(3)^5 P(2)^2}{P(6) P(4)^2 P(5)}
	 +\tfrac{10 q^{52} P(1)^3 P(3)^2}{P(6) P(5)}
	 +\tfrac{4 q^{52} P(1) P(3)^3 P(2)^3}{P(6) P(4)^2 P(5)}
	\\&\quad	 
	 +\tfrac{q^{65} P(1)^2 P(3) P(2)^4}{P(6) P(4)^2 P(5)}
	 +\tfrac{10 q^{78} P(1)^3 P(2)^5}{P(6) P(3) P(4)^2 P(5)}
	,\\
	B_{13,12}(q^{13})
	&=
	\tfrac{9 P(4)^2 P(3)^5}{P(1)^2 P(6) P(5)}
	+\tfrac{5 q^{13} P(4) P(3)^6}{P(6) P(2)^2 P(5)}
	+\tfrac{4 q^{13} P(2) P(3)^7}{P(1)^2 P(4) P(6) P(5)}
	+\tfrac{8 q^{26} P(1) P(4) P(3)^4}{P(6) P(2) P(5)}
	+\tfrac{5 q^{26} P(2)^5 P(3)^2}{P(1)^2 P(6) P(5)}
	\\&\quad
	+\tfrac{3 q^{26} P(2)^2 P(3)^5}{P(1) P(4) P(6) P(5)}
	+\tfrac{11 q^{39} P(2)^5}{P(6) P(3)}
	+\tfrac{9 q^{39} P(1)^2 P(4) P(3)^2}{P(6) P(5)}
	+\tfrac{6 q^{39} P(2)^6}{P(1) P(6) P(5)}
	+\tfrac{2 q^{39} P(2)^3 P(3)^3}{P(4) P(6) P(5)}
	\\&\quad
	+\tfrac{5 q^{52} P(1)^3 P(4) P(2)}{P(6) P(5)}
	+\tfrac{5 q^{52} P(1) P(2)^4 P(3)}{P(4) P(6) P(5)}
	+\tfrac{3 q^{65} P(1)^4 P(4) P(2)^2}{P(6) P(5) P(3)^2}
	+\tfrac{8 q^{65} P(1)^5 P(3)}{P(6) P(2) P(5)}
	+\tfrac{q^{65} P(1)^2 P(2)^5}{P(4) P(6) P(5) P(3)}
	\\&\quad
	+\tfrac{10 q^{78} P(1)^6 }{P(6) P(5) P(3)}
	%maple agrees with these B's
.
\end{align*}
\end{theorem}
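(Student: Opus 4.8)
The plan is to derive, separately for $U(q)$ and $V(q)$, an exact $q$-series identity expressing it as an eta-quotient plus an eta-quotient times a generalized Lambert series, and then to reduce that identity modulo $\ell$ and carry out an $\ell$-dissection. First I would remove the awkward finite factor $\aqprod{q^n}{q}{n+1}$. Using $\aqprod{q^n}{q}{n+1}=\aqprod{q^n}{q}{\infty}/\aqprod{q^{2n+1}}{q}{\infty}$ and $\aqprod{q^n}{q}{\infty}=E(1)/\aqprod{q}{q}{n-1}$, the defining series collapse to the single $q$-hypergeometric forms
\[
U(q)=\frac{1}{E(1)^{3}}\sum_{n\ge1}\frac{q^{n}\aqprod{q}{q}{n-1}^{4}}{\aqprod{q}{q}{2n}},
\qquad
V(q)=\frac{1}{E(1)^{3}}\sum_{n\ge1}\frac{q^{2n}\aqprod{q}{q}{n-1}^{4}}{\aqprod{q}{q}{2n}},
\]
which is exactly the shape to which Bailey-pair machinery applies.

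The creative step is to insert a suitable Bailey pair (equivalently, a limiting case of a $q$-hypergeometric transformation) into these sums and to invoke the associated generalized Lambert series identity, producing exact identities $U(q)=g_U(q)+h_U(q)\,\mathcal L_U(q)$ and $V(q)=g_V(q)+h_V(q)\,\mathcal L_V(q)$, where $g_\bullet,h_\bullet$ are explicit theta/eta-quotients and $\mathcal L_\bullet$ is a generalized Lambert series $\sum_m (-1)^m q^{Q(m)}/(1-zq^{m})$ for a quadratic $Q$ and a suitable specialisation of $z$. Once these identities are secured, everything downstream is determined.

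With exact representations in hand, the reduction modulo $\ell$ is mechanical but lengthy. I would clear $\ell$-th powers using the Frobenius congruence $E(a)^{\ell}\equiv E(\ell a)\pmod{\ell}$, and then $\ell$-dissect each surviving piece. For the Lambert series, substituting $m\mapsto\ell n+j$ and re-indexing splits the sum by the residue of the pole exponent modulo $\ell$; after completing the square, only the residues recorded in the statement survive modulo $\ell$, each producing one of the displayed series with quadratic exponent $\tfrac{\ell^{2}n^{2}+\ell(\ell+2)n}{2}$ and pole $1-q^{\ell^{2}n+c}$, together with the prefactor $1/(E(\ell^{2})P(1))$ coming from the reduced eta part. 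For the theta-quotients $g_\bullet,h_\bullet$ I would apply the standard $\ell$-dissection of $\aqprod{q}{q}{\infty}$, whose components are exactly the Jacobi products $P(a)=\jacprod{q^{\ell a}}{q^{\ell^{2}}}$; collecting the resulting eta-quotients by the residue of the total $q$-power modulo $\ell$ yields the mod $7$ eta terms and the coefficient blocks $A_{13,i}(q^{13})$ and $B_{13,i}(q^{13})$.

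The hard part will be the sheer bulk of the dissection for $\ell=7$ and, above all, $\ell=13$: each block $A_{13,i}$ and $B_{13,i}$ is a sum of up to roughly twenty eta-quotients, and assembling them forces one to establish and combine a large family of theta-function dissection identities and then reduce every coefficient modulo $13$. This bookkeeping, rather than any single conceptual difficulty, is where essentially all of the effort lies, and I would carry it out with a computer-algebra implementation of the Jacobi triple product and of eta-quotient $\ell$-dissection, verifying each intermediate identity to high order in $q$ and cross-checking the final right-hand sides against direct power-series expansions of $U(q)$ and $V(q)$ from their definitions. The congruences of Theorem~\ref{TheoremCongruences} then follow at once by reading off the residue classes modulo $\ell$ that are absent from the right-hand sides.
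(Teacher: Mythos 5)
Your opening step does match the paper: collapsing the defining series to
$U(q)=E(1)^{-3}\sum_{n\ge1} q^{n}\aqprod{q}{q}{n-1}^{4}/\aqprod{q}{q}{2n}$ (and similarly for $V$) and feeding this into Bailey-pair machinery is exactly how the proof begins. The gap is in your second step. The Bailey pairs relevant here are obtained by differentiating the triple-product Bailey pair $\beta_n'(x)$ \emph{twice} in $x$, and what Bailey's Lemma then produces is not an identity of the shape $U=g_U+h_U\,\mathcal L_U$ with $\mathcal L_U$ a simple-pole Lambert series, but rather the exact representations
\begin{align*}
	U(q)
	&=
	\frac{-1}{2\aqprod{q}{q}{\infty}^3}
	\sideset{}{'}\sum_{n=-\infty}^\infty
	\frac{(-1)^n q^{n(n+1)/2}\,n(n+1)}{(1-q^n)^2}
	,
	&
	V(q)
	&=
	\frac{-1}{2\aqprod{q}{q}{\infty}^3}
	\sideset{}{'}\sum_{n=-\infty}^\infty
	\frac{(-1)^n q^{n(n+1)/2}\,n(n-1)}{(1-q^n)^2}
	,
\end{align*}
with \emph{double} poles $(1-q^n)^{-2}$ and polynomial weights $n(n\pm1)$. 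This is precisely why the paper describes $U$ and $V$ as quasimock (second derivatives of Lambert series) rather than as a modular form plus an Appell--Lerch sum; the exact identity you posit is not merely unproven, it is structurally the wrong shape. The idea you are missing is arithmetic rather than exact: modulo $\ell$ one has
\begin{align*}
	\frac{1}{(1-q^n)^2}
	\equiv
	\sum_{k=0}^{\ell-2}\frac{(k+1)q^{nk}}{1-q^{\ell n}}
	\pmod{\ell}
	,
\end{align*}
which trades the double pole for simple poles whose denominators $1-q^{\ell n}$ involve only powers of $q^{\ell}$. Only after this reduction (giving the series $S_\ell(b)$ and the decompositions (\ref{EqUToS}), (\ref{EqVToS})) can one dissect by $n\mapsto \ell n+k+c$ and land on series in $q^{\ell^{2}}$ of the displayed type.

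There is a second gap in your dissection step. After substituting $n\mapsto\ell n+k+c$, \emph{every} residue class $k$ produces its own generalized Lambert series $T(\ell k+\ell c,\ell k,\ell^{2})$, so a priori each $S_\ell(b)$ contributes about $\ell$ such series; it is not the case that ``only the residues recorded in the statement survive modulo $\ell$.'' The content of the paper's Lemmas \ref{MainLemmaForSSeries} and \ref{SecondLemmaForSSeries} is to collapse this proliferation: the two-variable Lambert series identity (\ref{ChanIdent1}) of Chan expresses each $T(\ell k+\ell c,\ell k,\ell^{2})$ with $k\ne m$ in terms of the single series $T(\ell m+\ell c,\ell m,\ell^{2})$ plus theta quotients, and Jacobi's identity for $\aqprod{q}{q}{\infty}^{3}$ (in the form (\ref{EqECubedModL})) evaluates the resulting coefficient sum; this is where the prefactors $1/(E(\ell^{2})P(1))$ actually come from. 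Without this collapsing mechanism your bookkeeping for $\ell=7$ and $13$ would never terminate in the stated form, no matter how much computer algebra is applied. A final, smaller point: verifying the needed product identities ``to high order in $q$'' is not a proof; the paper instead reduces all eta-quotient manipulations to exact theta relations such as (\ref{EqUMod7AsIdent}) and (\ref{MolkTanneryProductId}). The rest of your outline (the congruence $E(1)^{-3}\equiv E(1)^{\ell-3}/E(\ell)\pmod{\ell}$, the $\ell$-dissection of $E(1)$, and machine-assisted reduction of the blocks $A_{13,i}$, $B_{13,i}$) does agree with the paper's Section 3.
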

Theorem \ref{TheoremMain} implies Theorem \ref{TheoremCongruences} as the 
coefficients of the powers of $q$ for the appropriate residue classes are zero
in the above congruences.
It is worth noting that none of the representations of $U(q)$ and $V(q)$ that
we will use 
suggest that these functions are even congruent to modular forms, whereas
$F(q)$ is essentially a modular form. There are few partition functions that satisfy 
such simple congruences modulo $13$ that are not modular forms or closely
related to modular forms. However, we can recognize the series appearing on the
right hand side of Theorem \ref{TheoremMain} as mock modular forms. For example,
in the notation of \cite{Zwegers} we can write
\begin{align*}
	\frac{q^2}{E(9)P(1)}\sum_{n=-\infty}^\infty 
	\frac{(-1)^nq^{\frac{9n^2+15n}{2}}}{(1-q^{9n+3})}
	&=
		iq^{\frac{1}{8}}\mu(3\tau,3\tau;9\tau)
	,
\end{align*}
where $q=e^{2\pi i\tau}$. With this the functions $U(q)$ and
$V(q)$ should be quasimock mock modular forms, which one should consult
\cite{BLO1} to understand.  To establish these functions are indeed quasimock
theta functions one should use the identities in the next section to recognize
$U(q)$ and $V(q)$ as a second derivative of a certain Lambert series.

In Section 2 we develop the general identities and congruences necessary to prove
Theorem \ref{TheoremMain}. While we will only use these congruences modulo
$3$, $5$, $7$, and $13$, they hold for all odd $\ell>1$ (in particular $\ell$
need not be prime). In this way,
the identities of the article could also be used to determine $U(q)$, $V(q)$,
and related functions to other moduli.
In Section 3 we prove Theorem \ref{TheoremMain} and in Section 4 we give a few
concluding remarks.

\section{Preliminary Identities}

To begin we find another form of the generating functions for $U(q)$ and 
$V(q)$. We say a pair of sequences $(\alpha,\beta)$ is a Bailey pair relative
to $(a,q)$ if
\begin{align*}
	\beta_n &= \sum_{k=0}^\infty \frac{\alpha_k}{\aqprod{q}{q}{n-k}\aqprod{aq}{q}{n+k}}
.
\end{align*}
A limiting case of Bailey's Lemma gives that
\begin{align*}
	\sum_{n=0}^\infty 
	\aqprod{\rho_1,\rho_2}{q}{n}\left(\tfrac{aq}{\rho_1\rho_2}\right)^n\beta_n
	&=
	\frac{\aqprod{aq/\rho_1,aq/\rho_2}{q}{\infty}}
		{\aqprod{aq,\tfrac{aq}{\rho_1\rho_2}}{q}{\infty}}
	\sum_{n=0}^\infty
	\frac{\aqprod{\rho_1,\rho_2}{q}{n} \left(\tfrac{aq}{\rho_1\rho_2}\right)^n \alpha_n  }
		{\aqprod{aq/\rho_1,aq/\rho_2}{q}{n}}
	.
\end{align*}
One may consult \cite{Andrews1} for a history of Bailey pairs and Bailey's Lemma.
When $(\alpha,\beta)$ is relative to $(1,q)$
and we set $\rho_1=z$, $\rho_2=z^{-1}$ this reduces to
\begin{align*}
	\sum_{n=0}^\infty 
	\aqprod{z,z^{-1}}{q}{n}q^n\beta_n
	&=
	\frac{\aqprod{z,z^{-1}}{q}{\infty}}
		{\aqprod{q}{q}{\infty}^2}
	\sum_{n=0}^\infty
	\frac{ q^n \alpha_n  } {(1-zq^n)(1-z^{-1}q^n)}
	.
\end{align*}

We recall one version of the finite Jacobi triple product identity \cite[page 49]{AndrewsBook} is
\begin{align*}
	\frac{\aqprod{xq,x^{-1}}{q}{n}}{\aqprod{q}{q}{2n}}
	&=
	\sum_{j=-n}^n \frac{(-1)^jx^jq^{j(j+1)/2}}
		{\aqprod{q}{q}{n-j}\aqprod{q}{q}{n+j}}
	=
	\frac{1}{\aqprod{q}{q}{2n}}
	+
	\sum_{j=1}^n 
	\frac{(-1)^j q^{j(j-1)/2} (x^{-n}+x^nq^n) }
		{\aqprod{q}{q}{n-j}\aqprod{q}{q}{n+j}}
.
\end{align*}
In the language of Bailey pairs, this says that
$(\alpha'(x),\beta'(x))$ is a Bailey pair with respect to $(1,q)$
,where
\begin{align*}
	\beta'_n (x)
	&= 
	\frac{\aqprod{xq,x^{-1}}{q}{n}}{\aqprod{q}{q}{2n}}
	,\\
	\alpha'_n(x)
	&=
	\PieceTwo{1}{(-1)^n q^{n(n-1)/2} (x^{-n}+x^nq^n)}{n=0}{n\ge 1}
.
\end{align*}
A direct, but somewhat lengthy, calculation shows that
\begin{align*}
	\frac{d^2}{dx^2}\beta_n^\prime(x) \vert_{x=1}
	&= -2\frac{\aqprod{q}{q}{n-1}^2}{\aqprod{q}{q}{2n}}
	,\\
	\frac{d^2}{dx^2}\beta_n^\prime(x) \vert_{x=q^{-1}}
	&= -2q^{n+2}\frac{\aqprod{q}{q}{n-1}^2}{\aqprod{q}{q}{2n}}
.
\end{align*}
This gives the Bailey pairs, with respect to $(1,q)$,
\begin{align*}
	\beta^u_n 
		&= 
		\frac{\aqprod{q}{q}{n-1}^2}{\aqprod{q}{q}{2n}}	
	,
	&\alpha^u_n 
		&= 
		(-1)^{n+1} q^{n(n-1)/2}
		\left(\frac{n(n+1)}{2}+\frac{n(n-1)}{2}q^n\right)
	,\\
	\beta^v_n 
		&= \frac{q^n\aqprod{q}{q}{n-1}^2}{\aqprod{q}{q}{2n}}	
	,
	&\alpha^v_n 
		&= 
		(-1)^{n+1} q^{n(n-1)/2}
		\left(\frac{n(n+1)}{2}q^{n}+\frac{n(n-1)}{2}\right)
	.
\end{align*}
We note $\beta^u_0=\beta^v_0=\alpha^u_0=\alpha^v_0=0$.

By Bailey's Lemma, we then have that
\begin{align*}
	U(q)
	&=
	\frac{1}{\aqprod{q}{q}{\infty}^3}
	\sum_{n=1}^\infty \frac{\aqprod{q}{q}{n-1}^4 q^n}{\aqprod{q}{q}{2n}}
	\\
	&=
	\frac{1}{\aqprod{q,z,z^{-1}}{q}{\infty}}
	\sum_{n=1}^\infty \frac{\aqprod{z,z^{-1}}{q}{n}q^n\aqprod{q}{q}{n-1}^2}
		{\aqprod{q}{q}{2n}}
	\Bigg\vert_{z=1}
	\\
	&=
	\frac{1}{\aqprod{q,z,z^{-1}}{q}{\infty}}
	\sum_{n=0}^\infty \aqprod{z,z^{-1}}{q}{n}q^n\beta^u_n
	\Bigg\vert_{z=1}
	\\
	&=
	\frac{1}{\aqprod{q}{q}{\infty}^3}
	\sum_{n=0}^\infty 
	\frac{q^n\alpha^u_n}{(1-zq^n)(1-z^{-1}q^n)}
	\Bigg\vert_{z=1}
	\\
	&=
	\frac{-1}{2\aqprod{q}{q}{\infty}^3}
	\sum_{n=1}^\infty 
	\frac{(-1)^n q^{n(n+1)/2}(n(n+1)+n(n-1)q^n)}{(1-zq^n)(1-z^{-1}q^n)}
	\Bigg\vert_{z=1}
	\\
	&=
	\frac{-1}{2\aqprod{q}{q}{\infty}^3}
	\sum_{n=1}^\infty 
	\frac{(-1)^n q^{n(n+1)/2}(n(n+1)+n(n-1)q^n)}{(1-q^n)^2}
	\\
	&=
	\frac{-1}{2\aqprod{q}{q}{\infty}^3}
	\sideset{}{'}\sum_{n=-\infty}^\infty 
	\frac{(-1)^n q^{n(n+1)/2}n(n+1)}{(1-q^n)^2}
.
\end{align*}
Here and elsewhere, a prime superscript in a summation indicates to omit the
terms that correspond to a division by zero.
Similarly we have that
\begin{align*}
	V(q)
	&=
	\frac{-1}{2\aqprod{q}{q}{\infty}^3}
	\sideset{}{'}\sum_{n=-\infty}^\infty 
	\frac{(-1)^n q^{n(n+1)/2}n(n-1)}{(1-q^n)^2}
\end{align*}

We define the two series
\begin{align*}
	S_\ell(b)
	&=
	\sideset{}{'}\sum_{n=-\infty}^\infty 
	\frac{(-1)^{n} q^{n(n+1)/2 +bn} n(n+1) }{(1-q^{\ell n})}
	,\\
	T(z,w,q)
	&=
	\sum_{n=-\infty}^\infty \frac{(-1)^nq^{n(n+1)/2}w^n}{1-zq^n}
.
\end{align*}
The series $T(z,w,q)$ is a generalized Lambert series that appears quite often
in number theory. It has been used by Lewis \cite{Lewis}, Lewis and Santa-Gadea
\cite{LewisSantaGadea}, and Ekin \cite{Ekin} in studying the the crank of 
partition. Also up to a product, it is the Apell-Lerch sum studied in detail
by Zwegers \cite{Zwegers} and Hickerson and Mortenson \cite{HickersonMortenson}.

To work with the series $T(z,w,q)$, we use the $r=1$, $s=2$ case of Theorem 2.1
of \cite{Chan}, which is that
\begin{align}
	\label{ChanIdent1}
	\frac{\jacprod{a}{q}\aqprod{q}{q}{\infty}^2}
		{\jacprod{b_1,b_2}{q}}
	&=
	\frac{\jacprod{a/b_1}{q}}{\jacprod{b_2/b_1}{q}}T(b_1, a/b_2, q)
	+
	\frac{\jacprod{a/b_2}{q}}{\jacprod{b_1/b_2}{q}}T(b_2, a/b_1, q)
.
\end{align}
Additionally, by letting
$n\mapsto-n+1$, we have that
\begin{align}
	\label{IdentT3}
	T(z,w,q) &= z^{-1}w^{-1}q T(z^{-1}q,w^{-1}q,q)
.
\end{align}
For integers $a$, $b$, and $c$ we write
\begin{align*}
	T(a,b,c) := T(q^a,q^b,q^c).
\end{align*}

\begin{lemma}
\label{MainLemmaForSSeries}
For $\ell > 1$ odd and $b$ any integer,
\begin{align*}
	S_\ell(b)
	&\equiv
		\frac{2b (-1)^b q^{\ell m -\frac{b(b+1)}{2}} \aqprod{q}{q}{\infty}^3}
		{ \aqprod{q^{\ell^2}}{q^{\ell^2}}{\infty}\jacprod{q^{\ell m}}{q^{\ell^2}}}
		T\left( \tfrac{\ell(\ell-1)}{2} + \ell m -\ell b, \ell m, \ell^2   \right)
		\\&\quad
		+
		(-1)^{\frac{\ell+1}{2}+b} q^{\frac{\ell^2-1}{8}-\frac{b(b+1)}{2}+\ell m}
		\frac{\aqprod{q^{\ell^2}}{q^{\ell^2}}{\infty}^2}
			{\jacprod{q^{\ell m}, q^{\frac{\ell(\ell-1)}{2} + \ell m -\ell b}}{q^{\ell^2}}}
		\\&\quad
		\times
		\sum_{\substack{k=0,\\k\not\equiv b+\frac{1}{2}\pmod{\ell}}}^{\ell-1}
		(-1)^{k} q^{\frac{k(k-\ell)}{2}} (k-b-\tfrac{1}{2})(k-b+\tfrac{1}{2})
		\frac{\jacprod{q^{\frac{\ell(\ell-1)}{2} + \ell m +\ell k-\ell b}, q^{\ell k-\ell m}}{q^{\ell^2}}} 
			{\jacprod{q^{\frac{\ell(\ell-1)}{2}-\ell b+\ell k}}{q^{\ell^2}}}
	\pmod{\ell}
	,
%maple agrees
\end{align*}
where $m$ is any integer such that $1\le m\le \ell-1$ and 
$m\not\equiv b+\frac{1}{2}\pmod{\ell}$.
\end{lemma}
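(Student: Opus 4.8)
The plan is to reduce $S_\ell(b)$ modulo $\ell$ to a finite combination of the Lambert series $T$ in the base $q^{\ell^2}$ by dissecting the summation index, and then to collapse that combination using the three-term relation \eqref{ChanIdent1}. The crucial first observation is that modulo $\ell$ the quadratic factor $n(n+1)$ becomes locally constant on residue classes: writing $n=\ell k+r$ with $0\le r\le \ell-1$, one has $n(n+1)\equiv r(r+1)\pmod{\ell}$, and since $\frac{1}{1-q^{\ell n}}=\sum_{t\ge0}q^{\ell n t}$ has integer coefficients this replacement is legitimate term by term. Consequently the classes $r\equiv 0$ and $r\equiv -1\pmod\ell$ drop out entirely, and (using $\ell$ odd, so $(-1)^n=(-1)^k(-1)^r$) the inner sum over $k$ for each surviving $r$ is, after completing the square in the exponent, exactly $T(q^{\ell r},\,q^{\ell(r+b)-\ell(\ell-1)/2},\,q^{\ell^2})$. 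Thus
\begin{align*}
S_\ell(b)
&\equiv
\sum_{r=1}^{\ell-2} r(r+1)(-1)^r q^{r(r+1)/2+br}\,
T\!\left(q^{\ell r},\,q^{\ell(r+b)-\ell(\ell-1)/2},\,q^{\ell^2}\right)
\pmod{\ell}.
\end{align*}
This is the step that turns an object which otherwise looks like a second derivative of a Lambert series into a plain finite sum of $T$'s.

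Next I would apply the $r=1$, $s=2$ three-term relation \eqref{ChanIdent1} in the base $q^{\ell^2}$ to each term, taking the two pivots to be $b_1=q^{\ell r}$ and the distinguished value $b_2=q^{\ell(\ell-1)/2+\ell m-\ell b}$, together with $a=q^{\ell(r+m)}$, so that $a/b_1=q^{\ell m}$ and $a/b_2=q^{\ell(r+b)-\ell(\ell-1)/2}$ match the two arguments of the $T$ at hand and of the target series $T(b_2,q^{\ell m},q^{\ell^2})$. Solving \eqref{ChanIdent1} for $T(b_1,a/b_2)$ expresses each summand as a Jacobi-theta quotient plus a universal multiple of the single series $T(b_2,q^{\ell m},q^{\ell^2})$; the quasi-periodicity \eqref{IdentT3} and the symmetry $[z;q]_\infty=[q/z;q]_\infty$ are used to bring all arguments into the standard window $0$ to $\ell-1$. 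The hypothesis $m\not\equiv b+\tfrac12\pmod\ell$ is exactly the condition that the pivots never coincide on the distinguished residue (equivalently that the denominator theta in the finite sum does not vanish there), which is why it must be imposed; moreover the two indices at which the numerator thetas of the finite sum vanish are precisely the images of the two dropped classes $r\equiv0,-1$, so the term counts match after the reindexing $r\leftrightarrow k$.

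The main obstacle is the final consolidation: after collecting contributions, the coefficient of the single series $T(b_2,q^{\ell m},q^{\ell^2})$ must be shown to simplify to $2b(-1)^b q^{\ell m-b(b+1)/2}(q;q)_\infty^3/\big((q^{\ell^2};q^{\ell^2})_\infty[q^{\ell m};q^{\ell^2}]_\infty\big)$. What makes this nontrivial is that each contributing coefficient is a theta quotient in the base $q^{\ell^2}$, yet their weighted sum must reassemble into the base-$q$ product $(q;q)_\infty^3$; I expect this to be the place where one invokes Jacobi's identity $(q;q)_\infty^3=\sum_{n\ge0}(-1)^n(2n+1)q^{n(n+1)/2}$ and its $\ell$-dissection, the linear weight $2n+1$ there accounting for the explicit factor $2b$ and for the $b$-linear prefactor in front of the $T$ term. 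The remaining theta quotients, after the same reindexing and repeated use of quasi-periodicity to normalize exponents, assemble term by term into the stated finite sum over $k$, with the overall constant $(-1)^{(\ell+1)/2+b}q^{(\ell^2-1)/8-b(b+1)/2+\ell m}$ emerging from the accumulated monomial factors. Verifying these bookkeeping steps (signs, powers of $q$, and the $\ell$-dissection of Jacobi's identity) is routine but lengthy; the genuine content is the locally-constant reduction of $n(n+1)$ and the collapse of the base-$q^{\ell^2}$ theta sum into a base-$q$ eta power.
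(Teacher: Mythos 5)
Your plan is, in all essential respects, the paper's own proof. Your dissection formula is correct and is literally the paper's first step in different coordinates: with $r=k+c$, $c=\frac{\ell-1}{2}-b$, your $T\left(q^{\ell r},q^{\ell(r+b)-\ell(\ell-1)/2},q^{\ell^2}\right)$ is the paper's $T(\ell k+\ell c,\ell k,\ell^2)$, your justification for dropping the classes $r\equiv 0,-1$ (integrality of the geometric expansions plus $n(n+1)\equiv r(r+1)$) is the paper's, and your application of (\ref{ChanIdent1}) uses exactly the paper's pivots with the roles of $b_1$ and $b_2$ interchanged, which is immaterial since the identity is symmetric; your reading of the hypothesis on $m$ as the non-vanishing of $\jacprod{q^{\frac{\ell(\ell-1)}{2}+\ell m-\ell b}}{q^{\ell^2}}$ in the denominators is also the paper's.

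The one genuine gap sits inside the step you set aside as routine bookkeeping. After Chan's identity, the coefficient of the target series $T\left(\tfrac{\ell(\ell-1)}{2}+\ell m-\ell b,\ell m,\ell^2\right)$ collapses (up to a monomial and the factor $\jacprod{q^{\ell m}}{q^{\ell^2}}^{-1}$) to the theta sum $\sum_{k=1}^{\ell-1}(-1)^kq^{k(k-\ell)/2}(k+c)(k+c+1)\jacprod{q^{\ell k}}{q^{\ell^2}}$, whose weight is \emph{quadratic} in $k$, whereas Jacobi's identity in any form is a \emph{linearly} weighted theta series; its $\ell$-dissection cannot be matched against this sum directly, and the factor $2b$ does not come from the weight $2n+1$ as you suggest. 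The missing idea is the reflection $k\mapsto\ell-k$: since $(-1)^kq^{k(k-\ell)/2}\jacprod{q^{\ell k}}{q^{\ell^2}}$ is odd under this map (for $\ell$ odd), the symmetric part of the weight cancels in pairs and only $\tfrac12\left[(k+c)(k+c+1)-(c-k)(c-k+1)\right]=k(2c+1)\equiv-2bk\pmod{\ell}$ survives. That reflection is where $2b$ actually arises and where the quadratic weight disappears; only after it does the $\ell$-dissection of $\aqprod{q}{q}{\infty}^3=\sum_{n=-\infty}^{\infty}(-1)^{n+1}nq^{n(n-1)/2}$, i.e.\ the paper's (\ref{EqMainLemmaLastReduction}), finish the argument. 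A separate, harmless slip: the dropped classes $r\equiv0,-1$ do not reappear as the vanishing numerator thetas of the finite sum; they correspond to the excluded index $k\equiv b+\tfrac12$ and to the index $k\equiv b-\tfrac12$ whose coefficient $(k-b+\tfrac12)$ vanishes mod $\ell$, while the numerator thetas vanish at $k\equiv m$ and $k\equiv b+\tfrac12-m$. The bookkeeping remark can simply be deleted, but the reflection step is real content your plan would have to supply.
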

\begin{proof}
In the series form of $S_\ell(b)$, we replace $n$ by $\ell n+k+c$, where
$c=\frac{\ell-1}{2}-b$ and $k=0,1,\dots,\ell-1$. Working modulo $\ell$, this 
gives that
\begin{align*}
	S_\ell(b)
	&=
	\sideset{}{'}\sum_{n=-\infty}^\infty
	\frac{(-1)^n q^{\frac{n(n+1)}{2}+bn}n(n+1)  }{1-q^{\ell n}}
	\\
	&\equiv
	\sum_{k=0}^{\ell-1}
	(-1)^{k+c} q^{\frac{c(c+1)}{2}+\frac{k(k+1)}{2}+ck+bc+bk } (k+c)(k+c+1)
	\sideset{}{'}\sum_{n=-\infty}^\infty
	\frac{ (-1)^n q^{\frac{\ell^2n^2+\ell n}{2} +\ell cn + \ell kn + \ell bn }  }
		{1-q^{ \ell^2n +\ell k + \ell c }}
	\\
	&\equiv
	\sum_{k=0}^{\ell-1}
	(-1)^{k+c} q^{\frac{c(c+1)}{2}+\frac{k(k+1)}{2}+ck+bc+bk } (k+c)(k+c+1)
	\sideset{}{'}\sum_{n=-\infty}^\infty
	\frac{ (-1)^n q^{\frac{\ell^2n(n+1)}{2} +\ell( c + k  + b -\frac{\ell-1}{2} )n }  }
		{1-q^{ \ell^2n +\ell k + \ell c }}
	\\
	&\equiv
	\sum_{k=0}^{\ell-1}
	(-1)^{k+c} q^{\frac{c(c+1)}{2}+\frac{k(k+1)}{2}+ck+bc+bk } (k+c)(k+c+1)
	\sideset{}{'}\sum_{n=-\infty}^\infty
	\frac{ (-1)^n q^{\frac{\ell^2n(n+1)}{2} +\ell kn }  }
		{1-q^{ \ell^2n +\ell k + \ell c }}
	\pmod{\ell}
	.
\end{align*}
Here we note that the only division by zero would occur in the inner series when
$k\equiv -c\equiv b+\frac{1}{2}\pmod{\ell}$. However, this entire term of the outer
sum is zero modulo $\ell$,
due to the factor of $(k+c)$. 
Additionally, by the restrictions on $m$, this
does not correspond to the $k=m$ term. Therefore we can omit the 
$k\equiv-c\pmod{\ell}$ term and
write the remaining inner series 
as $T(\ell k + \ell c, \ell k, \ell^2)$
With this in mind, we now have that
\begin{align*}
	S_\ell(b)
	&\equiv
	\sum_{\substack{k=0,\\k\not\equiv-c\pmod{\ell}}}^{\ell-1}
	\hspace{-10pt}
	(-1)^{k+c} q^{\frac{c(c+1)}{2}+\frac{k(k+1)}{2}+ck+bc+bk } (k+c)(k+c+1)
	T(\ell k + \ell c, \ell k, \ell^2)
	\\
	&\equiv
	(-1)^{\frac{\ell-1}{2}+b}q^{\frac{\ell^2-1}{8}-\frac{b(b+1)}{2}}
	\hspace{-10pt}
	\sum_{\substack{k=0,\\k\not\equiv-c\pmod{\ell}}}^{\ell-1}
	\hspace{-10pt}
	(-1)^{k} q^{\frac{k(k+\ell)}{2} } (k+c)(k+c+1)
	T(\ell k + \ell c, \ell k, \ell^2)
	\\
	&\equiv
		(-1)^{\frac{\ell-1}{2}+b+m} q^{\frac{\ell^2-1}{8}-\frac{b(b+1)}{2}+\frac{m(m+\ell)}{2} } 
		(m+c)(m+c+1) T(\ell m + \ell c, \ell m, \ell^2)
		\\&\quad
		+	
		(-1)^{\frac{\ell-1}{2}+b}q^{\frac{\ell^2-1}{8}-\frac{b(b+1)}{2}}
		\hspace{-15pt}
		\sum_{\substack{k=0,\\k\not=m,k\not\equiv-c\pmod{\ell}}}^{\ell-1}
		\hspace{-15pt}
		(-1)^{k} q^{\frac{k(k+\ell)}{2} } (k+c)(k+c+1)
		T(\ell k + \ell c, \ell k, \ell^2)
	\pmod{\ell}
	.
\end{align*}
For $k\not=m$ and $k\not\equiv-c\pmod{\ell}$, applying (\ref{ChanIdent1}) with 
$q\mapsto q^{\ell^2}$,
$b_1=q^{\ell m + \ell c}$, $b_2=q^{\ell k +\ell c}$, and
$a=q^{\ell m + \ell k + \ell c}$
yields
\begin{align*}
	\frac{ \aqprod{q^{\ell^2}}{q^{\ell^2}}{\infty}^2
		\jacprod{q^{\ell m+\ell k + \ell c}}{q^{\ell^2}}}
	{\jacprod{ q^{\ell m+\ell c}, q^{\ell k+\ell c}  }{q^{\ell^2}}}
	&=
	\frac{\jacprod{q^{\ell k}}{q^{\ell^2}}}{\jacprod{q^{\ell k - \ell m}}{q^{\ell^2}}}
	T(\ell m +\ell c, \ell m , \ell^2)
	+	
	\frac{\jacprod{q^{\ell m} }{q^{\ell^2}}}{\jacprod{q^{\ell m -\ell k}}{q^{\ell^2}}}
	T(\ell k +\ell c, \ell k, \ell^2)
,
\end{align*}
which we rearrange to 
\begin{align*}
	q^{\ell k-\ell m} T(\ell k +\ell c, \ell k, \ell^2)
	&=
	\frac{\jacprod{q^{\ell k}}{q^{\ell^2}}}{\jacprod{q^{\ell m}}{q^{\ell^2}}}
	T(\ell m +\ell c, \ell m, \ell^2)
	-
	\frac{ \aqprod{q^{\ell^2}}{q^{\ell^2}}{\infty}^2
		\jacprod{q^{\ell m +\ell k + \ell c}, q^{\ell k-\ell m }}{q^{\ell^2}}}
	{\jacprod{q^{\ell m},  q^{\ell m +\ell c}, q^{\ell k+\ell c}  }{q^{\ell^2}}}
.
\end{align*}
The restrictions on $m$ prevent a division by zero in the above identity.
We note this identity also holds for $k=m$.
Thus
\begin{align*}
	&S_\ell(b)
	\\
	&\equiv
		T(\ell m + \ell c, \ell m, \ell^2)
		(-1)^{\frac{\ell-1}{2}+b}q^{\frac{\ell^2-1}{8}-\frac{b(b+1)}{2}+ \ell m}
		\hspace{-15pt}
		\sum_{\substack{k=0,\\k\not\equiv-c\pmod{\ell}}}^{\ell-1}
		\hspace{-15pt}
		(-1)^{k} q^{\frac{k(k-\ell)}{2} } (k+c)(k+c+1)
		\frac{\jacprod{q^{\ell k}}{q^{\ell^2}}} {\jacprod{q^{\ell m}}{q^{\ell^2}}}
		\\&\quad
		+
		(-1)^{\frac{\ell+1}{2}+b}q^{\frac{\ell^2-1}{8}-\frac{b(b+1)}{2}+\ell m}
		\hspace{-15pt}
		\sum_{\substack{k=0,\\k\not\equiv-c\pmod{\ell}}}^{\ell-1}
		\hspace{-15pt}
		(-1)^{k} q^{\frac{k(k-\ell)}{2}} (k+c)(k+c+1)
		\frac{\aqprod{q^{\ell^2}}{q^{\ell^2}}{\infty}^2
			\jacprod{q^{\ell m +\ell k+\ell c}, q^{\ell k-\ell m}}{q^{\ell^2}}} 
			{\jacprod{q^{\ell m}, q^{\ell m+\ell c},q^{\ell k+\ell c}}{q^{\ell^2}}}
	\\
	&\equiv
		T(\ell m +\ell c, \ell m, \ell^2)
		\frac{(-1)^{\frac{\ell-1}{2}+b} q^{\frac{\ell^2-1}{8}-\frac{b(b+1)}{2}+\ell m} }
			{\jacprod{q^{\ell m}}{q^{\ell^2}}}
		\sum_{k=1}^{\ell-1}
		(-1)^{k} q^{\frac{k(k-\ell)}{2}} (k+c)(k+c+1)
		\jacprod{q^{\ell k}}{q^{\ell^2}}
		\\&\quad
		+
		(-1)^{\frac{\ell+1}{2}+b} q^{\frac{\ell^2-1}{8}-\frac{b(b+1)}{2}+\ell m}
		\frac{\aqprod{q^{\ell^2}}{q^{\ell^2}}{\infty}^2}
			{\jacprod{q^{\ell m}, q^{\ell m+\ell c}}{q^{\ell^2}}}
		\\&\quad\times
		\sum_{\substack{k=0,\\k\not\equiv-c\pmod{\ell}}}^{\ell-1}
		(-1)^{k} q^{\frac{k(k-\ell)}{2}} (k+c)(k+c+1)
		\frac{\jacprod{q^{\ell m +\ell k+\ell c}, q^{\ell k-\ell m}}{q^{\ell^2}}} 
			{\jacprod{q^{\ell k+\ell c}}{q^{\ell^2}}}
	\\
	&\equiv
		T(\tfrac{\ell(\ell-1)}{2} + \ell m -\ell b, \ell m, \ell^2)
		\frac{(-1)^{\frac{\ell-1}{2}+b} q^{\frac{\ell^2-1}{8}-\frac{b(b+1)}{2}+\ell m} }
			{\jacprod{q^{\ell m}}{q^{\ell^2}}}
		\sum_{k=1}^{\ell-1}
		(-1)^{k} q^{\frac{k(k-\ell)}{2}} (k+c)(k+c+1)
		\jacprod{q^{\ell k}}{q^{\ell^2}}
		\\&\quad
		+
		(-1)^{\frac{\ell+1}{2}+b} q^{\frac{\ell^2-1}{8}-\frac{b(b+1)}{2}+\ell m}
		\frac{\aqprod{q^{\ell^2}}{q^{\ell^2}}{\infty}^2}
			{\jacprod{q^{\ell m}, q^{\frac{\ell(\ell-1)}{2} + \ell m -\ell b}}{q^{\ell^2}}}
		\\&\quad
		\times
		\sum_{\substack{k=0,\\k\not\equiv b+\frac{1}{2}\pmod{\ell}}}^{\ell-1}
		(-1)^{k} q^{\frac{k(k-\ell)}{2}} (k-b-\tfrac{1}{2})(k-b+\tfrac{1}{2})
		\frac{\jacprod{q^{\frac{\ell(\ell-1)}{2}+\ell m +\ell k-\ell b}, q^{\ell k-\ell m}}{q^{\ell^2}}} 
			{\jacprod{q^{\frac{\ell(\ell-1)}{2}-\ell b+\ell k}}{q^{\ell^2}}}
	%\\&\quad
	\pmod{\ell}
.
\end{align*}
To finish the proof, we must verify that
\begin{align*}
	\sum_{k=1}^{\ell-1}
		(-1)^{k} q^{\frac{k(k-\ell)}{2}} (k+c)(k+c+1) \jacprod{q^{\ell k}}{q^{\ell^2}}
	&\equiv
	2b (-1)^{\frac{\ell-1}{2}} q^{\frac{1-\ell^2}{8}}
	\frac{\aqprod{q}{q}{\infty}^3}{\aqprod{q^{\ell^2}}{q^{\ell^2}}{\infty}}
	\pmod{\ell}
.
\end{align*}
By $k\mapsto \ell-k$, we see that
\begin{align*}
	\sum_{k=1}^{\ell-1}
		(-1)^{k} q^{\frac{k(k-\ell)}{2}} (k+c)(k+c+1)
		\jacprod{q^{\ell k}}{q^{\ell^2}}
	&\equiv
	-
	\sum_{k=1}^{\ell-1}
		(-1)^{k} q^{\frac{k(k-\ell)}{2}} (c-k)(c-k+1)
		\jacprod{q^{\ell k}}{q^{\ell^2}}
	\pmod{\ell}
,
\end{align*}
and so
\begin{align*}
	&\sum_{k=1}^{\ell-1}
		(-1)^{k} q^{\frac{k(k-\ell)}{2}} (k+c)(k+c+1)
		\jacprod{q^{\ell k}}{q^{\ell^2}}
	\\
	&\equiv
	\frac{1}{2}
	\sum_{k=1}^{\ell-1}
		(-1)^{k} q^{\frac{k(k-\ell)}{2}}
		\jacprod{q^{\ell k}}{q^{\ell^2}}
		\left((k+c)(k+c+1)-(c-k)(c-k+1) \right)
 	\\
	&\equiv
	-2b
	\sum_{k=1}^{\ell-1}
		(-1)^{k} q^{\frac{k(k-\ell)}{2}} k
		\jacprod{q^{\ell k}}{q^{\ell^2}}
	\pmod{\ell}
.
\end{align*}
So we need to show
\begin{align}
	\label{EqMainLemmaLastReduction}
	\sum_{k=1}^{\ell-1}
	(-1)^{k} q^{\frac{k(k-\ell)}{2}} k \jacprod{q^{\ell k}}{q^{\ell^2}}
	&\equiv
	(-1)^{\frac{\ell+1}{2}}q^{\frac{1-\ell^2}{8}}
	\frac{\aqprod{q}{q}{\infty}^3}{\aqprod{q^{\ell^2}}{q^{\ell^2}}{\infty}}
	\pmod{\ell}
.
\end{align}
This actually follows just from Jacobi's identity
for $\aqprod{q}{q}{\infty}^3$,
which we write as
\begin{align*}
	\aqprod{q}{q}{\infty}^3
	&=
	\sum_{n=0}^\infty (-1)^n (2n+1) q^{\frac{n(n+1)}{2}}
	\\
	&=
		\sum_{n=0}^\infty (-1)^n n q^{\frac{n(n+1)}{2}}	
		+
		\sum_{n=0}^\infty (-1)^n n q^{\frac{n(n+1)}{2}}	
		+	
		\sum_{n=0}^\infty (-1)^n q^{\frac{n(n+1)}{2}}	
	\\
	&=
		\sum_{n=1}^\infty (-1)^{n-1} (n-1) q^{\frac{n(n-1)}{2}}	
		+
		\sum_{n=-\infty}^0 (-1)^{n+1} n q^{\frac{n(n-1)}{2}}	
		+	
		\sum_{n=0}^\infty (-1)^n q^{\frac{n(n+1)}{2}}	
	\\
	&=
		\sum_{n=-\infty}^\infty (-1)^{n+1} n q^{\frac{n(n-1)}{2}}	
		+
		\sum_{n=1}^\infty (-1)^{n} q^{\frac{n(n-1)}{2}}	
		+	
		\sum_{n=0}^\infty (-1)^n q^{\frac{n(n+1)}{2}}	
	\\
	&=
		\sum_{n=-\infty}^\infty (-1)^{n+1} n q^{\frac{n(n-1)}{2}}	
.
\end{align*}
We set $d=\frac{1-\ell}{2}$
and let
$n\mapsto \ell n + k +d$, for
$k=0,1,\dots,\ell-1$,
so that
\begin{align}
	\label{EqECubedModL}
	\aqprod{q}{q}{\infty}^3
	&\equiv
		\sum_{k=0}^{\ell-1}
		(-1)^{k+d+1} (k+d) q^{\frac{d(d-1)}{2}+\frac{k(k-1)}{2}+dk  }
		\sum_{n=-\infty}^\infty 
		(-1)^n q^{ \frac{\ell^2n^2-\ell n}{2}+\ell dn + \ell kn  }
	\nonumber\\
	&\equiv
		\sum_{k=0}^{\ell-1}
		(-1)^{k+d+1} k q^{\frac{d(d-1)}{2}+\frac{k(k-1)}{2}+dk  }
		\sum_{n=-\infty}^\infty 
		(-1)^n q^{ \frac{\ell^2n^2-\ell n}{2}+\ell dn + \ell kn  }
		\nonumber\\&\quad
		+
		d\sum_{k=0}^{\ell-1}
		(-1)^{k+d+1} q^{\frac{d(d-1)}{2}+\frac{k(k-1)}{2}+dk  }
		\sum_{n=-\infty}^\infty 
		(-1)^n q^{ \frac{\ell^2n^2-\ell n}{2}+\ell dn + \ell kn  }
	\nonumber\\
	&\equiv
		\sum_{k=0}^{\ell-1}
		(-1)^{k+d+1} k q^{\frac{d(d-1)}{2}+\frac{k(k-1)}{2}+dk  }
		\sum_{n=-\infty}^\infty 
		(-1)^n q^{ \frac{\ell^2n^2-\ell n}{2}+\ell dn + \ell kn  }
		+
		d\sum_{n=-\infty}^\infty 
		(-1)^{n+1} q^{ \frac{n(n-1)}{2} }
	\nonumber\\
	&\equiv
		\sum_{k=0}^{\ell-1}
		(-1)^{k+d+1} k q^{\frac{d(d-1)}{2}+\frac{k(k-1)}{2}+dk  }
		\sum_{n=-\infty}^\infty 
		(-1)^n q^{ \frac{\ell^2n^2-\ell n}{2}+\ell dn + \ell kn  }
	\nonumber\\
	&\equiv
		\sum_{k=1}^{\ell-1}
		(-1)^{k+d+1} k q^{\frac{d(d-1)}{2}+\frac{k(k-1)}{2}+dk  }
		\sum_{n=-\infty}^\infty 
		(-1)^n q^{\ell kn-\ell^2n} q^{ \frac{\ell^2n(n+1)}{2} }
	\nonumber\\
	&\equiv
		\sum_{k=1}^{\ell-1}
		(-1)^{k+d+1} k q^{\frac{d(d-1)}{2}+\frac{k(k-1)}{2}+dk  }
		\aqprod{q^{\ell^2}}{q^{\ell^2}}{\infty}
		\jacprod{q^{\ell k}}{q^{\ell^2}}
	\nonumber\\
	&\equiv
		(-1)^{\frac{1+\ell}{2}}  q^{ \frac{\ell^2-1}{8} }
		\sum_{k=1}^{\ell-1}
		(-1)^{k} k q^{\frac{k(k-\ell)}{2}  }
		\aqprod{q^{\ell^2}}{q^{\ell^2}}{\infty}
		\jacprod{q^{\ell k}}{q^{\ell^2}}
	\pmod{\ell}
	.
\end{align}
This immediately implies (\ref{EqMainLemmaLastReduction}) and we have proved
the Lemma. A similar congruence for $\aqprod{q}{q}{\infty}^3$ can
be found in Lemma 3 of \cite{AS}. 
\end{proof}

While it is not true that $S_\ell(\ell-b-1)\equiv\pm S_\ell(b)\pmod{\ell}$,
they are closely related.
\begin{lemma}
\label{SecondLemmaForSSeries}
For $\ell>1$ odd and $b$ any integer, 
\begin{align*}
	S_\ell(\ell-b-1)
	&\equiv
		\frac{2(b+1)(-1)^{b+1} q^{\ell m -\frac{b(b+1)}{2}} \aqprod{q}{q}{\infty}^3}
		{ \aqprod{q^{\ell^2}}{q^{\ell^2}}{\infty}\jacprod{q^{\ell m}}{q^{\ell^2}}}
		T\left( \tfrac{\ell(\ell-1)}{2} + \ell m -\ell b, \ell m, \ell^2   \right)
		\\&\quad
		-
		(-1)^{\frac{\ell+1}{2}+b} q^{\frac{\ell^2-1}{8}-\frac{b(b+1)}{2}+\ell m}
		\frac{\aqprod{q^{\ell^2}}{q^{\ell^2}}{\infty}^2}
			{\jacprod{q^{\ell m}, q^{\frac{\ell(\ell-1)}{2} + \ell m -\ell b}}{q^{\ell^2}}}
		\\&\quad
		\times
		\sum_{\substack{k=0,\\k\not\equiv b+\frac{1}{2}\pmod{\ell}}}^{\ell-1}
		(-1)^{k} q^{\frac{k(k-\ell)}{2}} (-k+b+\tfrac{1}{2})(-k+b+\tfrac{3}{2})
		\frac{\jacprod{q^{\frac{\ell(\ell-1)}{2} + \ell m +\ell k-\ell b}, q^{\ell k-\ell m}}{q^{\ell^2}}} 
			{\jacprod{q^{\frac{\ell(\ell-1)}{2}-\ell b+\ell k}}{q^{\ell^2}}}
	\\&\quad
	\pmod{\ell}
	,
%maple agrees
\end{align*}
where $m$ is any integer such that $1\le m\le \ell-1$ and 
$m\not\equiv b+\frac{1}{2}\pmod{\ell}$.
\end{lemma}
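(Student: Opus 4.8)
The plan is to express $S_\ell(\ell-b-1)$ in terms of $S_\ell(b)$, which is already determined by Lemma \ref{MainLemmaForSSeries}, together with one auxiliary series that is handled by the very same method. First I would apply $n\mapsto -n$ in the defining series of $S_\ell(\ell-b-1)$. Using $(-1)^{-n}=(-1)^n$, $(-n)(-n+1)=n(n-1)$, and $\frac{1}{1-q^{-\ell n}}=\frac{-q^{\ell n}}{1-q^{\ell n}}$, the factor $q^{\ell n}$ from the last identity absorbs into the numerator and collapses the exponent $\frac{n(n-1)}{2}-(\ell-b-1)n+\ell n$ to $\frac{n(n+1)}{2}+bn$, giving
\begin{align*}
	S_\ell(\ell-b-1)
	&=
	-\sideset{}{'}\sum_{n=-\infty}^\infty
	\frac{(-1)^n q^{\frac{n(n+1)}{2}+bn}\,n(n-1)}{1-q^{\ell n}}
.
\end{align*}
Writing $n(n-1)=n(n+1)-2n$ then yields
\begin{align*}
	S_\ell(\ell-b-1)
	&=
	-S_\ell(b)+2R_\ell(b)
	,\qquad
	R_\ell(b)
	:=
	\sideset{}{'}\sum_{n=-\infty}^\infty
	\frac{(-1)^n q^{\frac{n(n+1)}{2}+bn}\,n}{1-q^{\ell n}}
.
\end{align*}
The virtue of this reduction is that $R_\ell(b)$ carries the \emph{same} shift $b$ as $S_\ell(b)$, so the two can be combined with a single choice of $c$ and $m$.

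Next I would evaluate $R_\ell(b)$ modulo $\ell$ by transcribing the proof of Lemma \ref{MainLemmaForSSeries} with the only change that the numerator polynomial $n(n+1)$ is replaced by $n$. Concretely, the substitution $n\mapsto\ell n+k+c$ with $c=\frac{\ell-1}{2}-b$ reduces $R_\ell(b)$ modulo $\ell$ to a sum $\sum_{k}(\cdots)(k+c)\,T(\ell k+\ell c,\ell k,\ell^2)$, and the rearrangement of (\ref{ChanIdent1}) used there expresses each off-diagonal $T$ through $T(\ell m+\ell c,\ell m,\ell^2)=T(\tfrac{\ell(\ell-1)}{2}+\ell m-\ell b,\ell m,\ell^2)$ and the very Jacobi quotients occurring in Lemma \ref{MainLemmaForSSeries}. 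Since $c$, $m$, and the global prefactor $(-1)^{\frac{\ell-1}{2}+b}q^{\frac{\ell^2-1}{8}-\frac{b(b+1)}{2}}$ depend only on $b$ and not on the numerator polynomial, every $T$-argument and every Jacobi product produced is identical to those in Lemma \ref{MainLemmaForSSeries}; only the polynomial factors differ, from $(k+c)(k+c+1)$ to $(k+c)$.

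To extract the coefficient of $T(\tfrac{\ell(\ell-1)}{2}+\ell m-\ell b,\ell m,\ell^2)$ I would use the reflection $k\mapsto\ell-k$, under which $q^{\frac{k(k-\ell)}{2}}$ and $\jacprod{q^{\ell k}}{q^{\ell^2}}$ are invariant while $(-1)^k$ changes sign because $\ell$ is odd. Averaging the summand with its reflection then gives
\begin{align*}
	\sum_{k=1}^{\ell-1}
	(-1)^{k} q^{\frac{k(k-\ell)}{2}} (k+c)\jacprod{q^{\ell k}}{q^{\ell^2}}
	&\equiv
	\sum_{k=1}^{\ell-1}
	(-1)^{k} q^{\frac{k(k-\ell)}{2}}\,k\,\jacprod{q^{\ell k}}{q^{\ell^2}}
	\pmod{\ell}
,
\end{align*}
so that (\ref{EqMainLemmaLastReduction}) applies verbatim and completes the evaluation of $R_\ell(b)$.

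Finally I would assemble $-S_\ell(b)+2R_\ell(b)$. In the leading $T$-term the coefficients combine as $-2b(-1)^b+2\cdot\bigl(-(-1)^b\bigr)=2(b+1)(-1)^{b+1}$, producing the stated coefficient. In the remaining sum the off-diagonal polynomial becomes $-(k-b-\tfrac12)(k-b+\tfrac12)+2(k-b-\tfrac12)=(k-b-\tfrac12)(-k+b+\tfrac32)=-(-k+b+\tfrac12)(-k+b+\tfrac32)$, which accounts at once for the change of sign in front of the second term and for the reflected factor $(-k+b+\tfrac12)(-k+b+\tfrac32)$ appearing in the statement. The only real obstacle is the disciplined tracking of these signs and half-integer shifts through the reflection and through (\ref{EqMainLemmaLastReduction}); everything else is a direct copy of the proof of Lemma \ref{MainLemmaForSSeries}.
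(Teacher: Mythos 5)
Your proof is correct, but it follows a genuinely different route from the paper's. The paper proves Lemma \ref{SecondLemmaForSSeries} by substituting $b\mapsto\ell-b-1$ and $m\mapsto\ell-m$ into Lemma \ref{MainLemmaForSSeries} and then translating the result back to the variables $b$ and $m$: the Lambert series is converted with the functional equation (\ref{IdentT3}), the Jacobi products with quasi-periodicity relations such as $\jacprod{q^{-a}}{q^{\ell^2}}=-q^{-a}\jacprod{q^{a}}{q^{\ell^2}}$ and $\jacprod{q^{\ell^2-a}}{q^{\ell^2}}=\jacprod{q^{a}}{q^{\ell^2}}$, and the finite sum by the reflection $k\mapsto\ell-k$. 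You instead derive the exact relation $S_\ell(\ell-b-1)=-S_\ell(b)+2R_\ell(b)$ from $n\mapsto-n$ (your exponent collapse $\frac{n(n-1)}{2}+(b+1)n=\frac{n(n+1)}{2}+bn$ and the sign coming from $\frac{1}{1-q^{-\ell n}}=\frac{-q^{\ell n}}{1-q^{\ell n}}$ are right), and you evaluate the auxiliary series $R_\ell(b)$ by rerunning the dissection machinery of Lemma \ref{MainLemmaForSSeries} with numerator polynomial $n$ in place of $n(n+1)$. That rerun is legitimate at every step: the polynomial enters only multiplicatively; the factor $k+c$ still annihilates the division-by-zero class $k\equiv-c\pmod{\ell}$, so the same hypotheses on $m$ suffice; and the reflection average $\tfrac{1}{2}\bigl((k+c)-(c-k)\bigr)=k$ lets you quote (\ref{EqMainLemmaLastReduction}) verbatim. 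Your final bookkeeping, $-2b(-1)^b-2(-1)^b=2(b+1)(-1)^{b+1}$ for the coefficient of $T\bigl(\tfrac{\ell(\ell-1)}{2}+\ell m-\ell b,\ell m,\ell^2\bigr)$ and $-(k-b-\tfrac12)(k-b+\tfrac12)+2(k-b-\tfrac12)=-(-k+b+\tfrac12)(-k+b+\tfrac32)$ for the product terms, reproduces the stated lemma exactly, with a single choice of $m$ serving both series since $R_\ell(b)$ carries the same shift $b$ as $S_\ell(b)$. As for what each approach buys: the paper's derivation introduces no new series and repeats none of the dissection work, at the cost of careful theta- and $T$-transformation bookkeeping; yours avoids those transformations entirely, at the cost of running the Lemma \ref{MainLemmaForSSeries} argument a second time, and it has the structural merit of exhibiting both lemmas as one argument applied to the two polynomials $n(n+1)$ and $n$, linked by an identity that holds exactly rather than merely modulo $\ell$.
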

\begin{proof}
In Lemma we use $b\mapsto\ell-b-1$ and $m\mapsto\ell-m$. 
We note that $m\equiv b+\frac{1}{2}\pmod{\ell}$ if and only if
$\ell-m\equiv\ell-b-1+\frac{1}{2}\pmod{\ell}$.
We note that
\begin{align*}
	\frac{\ell(\ell-1)}{2} + \ell(\ell-m) - \ell(\ell-b-1)
	&=
	\ell^2 - ( \tfrac{\ell(\ell-1)}{2} + \ell m -\ell b)
,
\end{align*}
so that
\begin{align*}
	&q^{\ell(\ell-m)-\frac{(\ell-b-1)(\ell-b)}{2}}
	T\left( \tfrac{\ell(\ell-1)}{2}+\ell(\ell-m)-\ell(\ell-b-1), \ell(\ell-m), \ell^2 \right)
	\\
	&=
	q^{\ell(\ell-m)-\frac{(\ell-b-1)(\ell-b)}{2}}
	T\left( \ell^2 - ( \tfrac{\ell(\ell-1)}{2} + \ell m -\ell b), \ell^2-\ell m, \ell^2 \right)
	\\
	&=
	q^{\ell(\ell-m)-\frac{(\ell-b-1)(\ell-b)}{2} - \ell^2 + \frac{\ell(\ell-1)}{2} +2\ell m - \ell b}
	T\left( \tfrac{\ell(\ell-1)}{2} + \ell m -\ell b, \ell m, \ell^2 \right)
	\\
	&=
	q^{\ell m-\frac{b(b+1)}{2}}
	T\left( \tfrac{\ell(\ell-1)}{2} + \ell m -\ell b, \ell m, \ell^2 \right)
,
\end{align*}
by (\ref{IdentT3}). In the sum over $k$, we use $k\mapsto\ell-k$ for 
$k$ from $1$ to $\ell-1$.
We note that
\begin{align*}
	\jacprod{ q^{ \frac{\ell(\ell-1)}{2} + \ell(\ell-m) - \ell(\ell-b-1) }}{q^{\ell^2}}
	&=
		\jacprod{q^{ \frac{\ell(\ell-1)}{2} + \ell m -\ell b}}{q^{\ell^2}}
	,
	\\
	\jacprod{ q^{\ell(\ell-k)-\ell(\ell-m)} }{q^{\ell^2}}
		&=
		\jacprod{ q^{\ell m-\ell k} }{q^{\ell^2}}
		=
		-q^{\ell m-\ell k}\jacprod{q^{\ell k-\ell m}}{q^{\ell^2}}
	,\\
	\jacprod{  q^{ \frac{\ell(\ell-1)}{2}+\ell(\ell-m)+\ell(\ell-k)-\ell(\ell-b-1) } }{q^{\ell^2}}
		&=
		\jacprod{ q^{ 2\ell^2-(\frac{\ell(\ell-1)}{2}+\ell m + \ell k - \ell b   )   }    }{q^{\ell^2}}
		\\
		&=
		\jacprod{ q^{ -\ell^2 + \frac{\ell(\ell-1)}{2}+\ell m + \ell k - \ell b     }    }{q^{\ell^2}}
		\\
		&=
		-q^{ -\ell^2 + \frac{\ell(\ell-1)}{2}+\ell m + \ell k - \ell b }
		\jacprod{ q^{ \frac{\ell(\ell-1)}{2}+\ell m + \ell k - \ell b     }    }{q^{\ell^2}}
	,\\
	\jacprod{ q^{\frac{\ell(\ell-1)}{2}-\ell(\ell-b-1)+\ell(\ell-k)  }  }{q^{\ell^2}}
		&=
		\jacprod{ q^{\ell^2 - ( \frac{\ell(\ell-1)}{2} -\ell b + \ell k)  }  }{q^{\ell^2}}
		=
		\jacprod{ q^{ \frac{\ell(\ell-1)}{2} -\ell b + \ell k  }  }{q^{\ell^2}}
	.
\end{align*}
Thus
\begin{align*}
	&
	q^{\ell(\ell-m)-\frac{(\ell-b-1)(\ell-b)}{2}}
	\frac{\jacprod{ q^{\frac{\ell(\ell-1)}{2} + \ell(\ell-m) +\ell(\ell-k)-\ell(\ell-b-1)}, 
			q^{\ell(\ell-k)-\ell(\ell-m)}}{q^{\ell^2}}}
	{\jacprod{ q^{\ell(\ell-m)}, 
		q^{ \frac{\ell(\ell-1)}{2} + \ell(\ell-m) -\ell(\ell-b-1)},
		q^{\frac{\ell(\ell-1)}{2}-\ell(\ell-b-1)+\ell(\ell-k)}
	}{q^{\ell^2}}}
	\\
	&=
	q^{\ell(\ell-m) - \frac{(\ell-b-1)(\ell-b)}{2} - \ell^2 
		+ \frac{\ell(\ell-1)}{2} + 2 \ell m - \ell b }
	\frac{\jacprod{q^{\frac{\ell(\ell-1)}{2} + \ell m +\ell k-\ell b}, 
		q^{\ell k-\ell m}}{q^{\ell^2}}} 
	{\jacprod{q^{\ell m}, q^{\frac{\ell(\ell-1)}{2} + \ell m -\ell b},
		q^{\frac{\ell(\ell-1)}{2}-\ell b+\ell k}}{q^{\ell^2}}}
	\\
	&=
	q^{\ell m - \frac{b(b+1)}{2}  }
	\frac{\jacprod{q^{\frac{\ell(\ell-1)}{2} + \ell m +\ell k-\ell b}, 
		q^{\ell k-\ell m}}{q^{\ell^2}}} 
	{\jacprod{q^{\ell m}, q^{\frac{\ell(\ell-1)}{2} + \ell m -\ell b},
		q^{\frac{\ell(\ell-1)}{2}-\ell b+\ell k}}{q^{\ell^2}}}
.	
\end{align*}
For the $k=0$ term we have
\begin{align*}
	&	
	q^{\ell(\ell-m)-\frac{(\ell-b-1)(\ell-b)}{2}}
		\frac{\jacprod{
			q^{\frac{\ell(\ell-1)}{2} + \ell(\ell-m) -\ell(\ell-b-1)}, 
			q^{-\ell(\ell-m)}}{q^{\ell^2}}}
		{\jacprod{
			q^{\ell(\ell-m)}, 
			q^{\frac{\ell(\ell-1)}{2} + \ell(\ell-m) -\ell(\ell-b-1)},
			q^{\frac{\ell(\ell-1)}{2}-\ell(\ell-b-1)}}{q^{\ell^2}}
		}
	\\
	&=
		-q^{\ell(\ell-m)-\frac{(\ell-b-1)(\ell-b)}{2} - \ell^2 + 2\ell m 
			+ \frac{\ell(\ell-1)}{2} - \ell b  }
		\frac{\jacprod{
			q^{\frac{\ell(\ell-1)}{2} + \ell m -\ell b}, 
			q^{-\ell m}}{q^{\ell^2}}}
		{\jacprod{
			q^{\ell m}, 
			q^{\frac{\ell(\ell-1)}{2} + \ell m -\ell b},
			q^{\frac{\ell(\ell-1)}{2}-\ell b}}{q^{\ell^2}}
		}
	\\
	&=
		-q^{\ell m-\frac{b(b+1)}{2}  }
		\frac{\jacprod{
			q^{\frac{\ell(\ell-1)}{2} + \ell m -\ell b}, 
			q^{-\ell m}}{q^{\ell^2}}}
		{\jacprod{
			q^{\ell m}, 
			q^{\frac{\ell(\ell-1)}{2} + \ell m -\ell b},
			q^{\frac{\ell(\ell-1)}{2}-\ell b}}{q^{\ell^2}}
		}
.
\end{align*}
With these identities we have that
\begin{align*}
	S_\ell(\ell-b-1)
	&\equiv
		\frac{2(b+1)(-1)^{b+1} q^{\ell m -\frac{b(b+1)}{2}} \aqprod{q}{q}{\infty}^3}
		{ \aqprod{q^{\ell^2}}{q^{\ell^2}}{\infty}\jacprod{q^{\ell m}}{q^{\ell^2}}}
		T\left( \tfrac{\ell(\ell-1)}{2} + \ell m -\ell b, \ell m, \ell^2   \right)
		\\&\quad
		-
		(-1)^{\frac{\ell+1}{2}+b} q^{\frac{\ell^2-1}{8}-\frac{b(b+1)}{2}+\ell m}
		\frac{\aqprod{q^{\ell^2}}{q^{\ell^2}}{\infty}^2}
			{\jacprod{q^{\ell m}, q^{\frac{\ell(\ell-1)}{2} + \ell m -\ell b}}{q^{\ell^2}}}
		\\&\quad
		\times
		\sum_{\substack{k=0,\\k\not\equiv b+\frac{1}{2}\pmod{\ell}}}^{\ell-1}
		(-1)^{k} q^{\frac{k(k-\ell)}{2}} (-k+b+\tfrac{1}{2})(-k+b+\tfrac{3}{2})
		\frac{\jacprod{q^{\frac{\ell(\ell-1)}{2} + \ell m +\ell k-\ell b}, q^{\ell k-\ell m}}{q^{\ell^2}}} 
			{\jacprod{q^{\frac{\ell(\ell-1)}{2}-\ell b+\ell k}}{q^{\ell^2}}}
	\\&\quad
	\pmod{\ell}
	.
\end{align*}
\end{proof}

While Lemmas \ref{MainLemmaForSSeries} and \ref{SecondLemmaForSSeries}
may appear complicated, their use is 
quite simple. We are trying to determine
formulas modulo $\ell$ for the $\ell$-dissections
of $U(q)$ and $V(q)$. 
Since 
\begin{align*}
	\frac{1}{(1-q^n)^2}
	&=
	\sum_{m=0}^\infty (m+1) q^{nm}
	\equiv
	\sum_{k=0}^{\ell-1} (k+1) q^{nk}
	\sum_{m=0}^\infty q^{\ell mn}
	\equiv
	\sum_{k=0}^{\ell-1} 
	\frac{(k+1)q^{nk}}{1-q^{\ell n}}
	\equiv
	\sum_{k=0}^{\ell-2} 
	\frac{(k+1)q^{nk}}{1-q^{\ell n}}
	\pmod{\ell},
\end{align*}
we have that
\begin{align*}
	U(q)
	&\equiv
	\frac{-1}{2\aqprod{q}{q}{\infty}^3}	
	\sum_{b=0}^{\ell-2}
	(b+1)	S_{\ell}(b)
	\pmod{\ell}
	,\\
	V(q)
	&\equiv
	\frac{-1}{2\aqprod{q}{q}{\infty}^3}	
	\sum_{b=0}^{\ell-2}
	(b+1)	S_{\ell}(b+1)
	\pmod{\ell}
	.
\end{align*}
For $\ell\ge 5$, we write these sums as
\begin{align}
	\label{EqUToS}
	U(q)
	&\equiv
	\frac{-1}{2\aqprod{q}{q}{\infty}^3}	
	\sum_{b=0}^{\ell-2}(b+1)S_\ell(b)
	\nonumber\\
	&\equiv
	\frac{-1}{2\aqprod{q}{q}{\infty}^3}	
	\bigg(
		S_\ell(0)		
		+
		\sum_{b=1}^{\frac{\ell-3}{2}}(b+1)S_\ell(b)
		+
		\tfrac{(\ell+1)}{2}S_\ell\left(\tfrac{\ell-1}{2}\right)
		+
		\sum_{b=\frac{\ell+1}{2}}^{\ell-2}(b+1)S_\ell(b)		
	\bigg)
	\nonumber\\
	&\equiv
	\frac{-1}{2\aqprod{q}{q}{\infty}^3}	
	\bigg(
		S_\ell(0)		
		+	
		\sum_{b=1}^{\frac{\ell-3}{2}}(b+1)S_\ell(b)
		+
		\tfrac{1}{2}S_\ell \left(\tfrac{\ell-1}{2}\right)
		+
		\sum_{b=1}^{\frac{\ell-3}{2}}(\ell-b)S_\ell(\ell-1-b)
	\bigg)
	\nonumber\\
	&\equiv
	\frac{-1}{2\aqprod{q}{q}{\infty}^3}	
	\bigg(
		S_\ell(0)		
		+
		\tfrac{1}{2}S_\ell\left(\tfrac{\ell-1}{2}\right)
		+
		\sum_{b=1}^{\frac{\ell-3}{2}} (b+1)S_\ell(b) - bS_\ell(\ell-1-b) 
	\bigg)
	\pmod{\ell}
	,\\
	\label{EqVToS}
	V(q)
	&\equiv
	\frac{-1}{2\aqprod{q}{q}{\infty}^3}	
	\bigg(
		-
		\tfrac{1}{2}S_\ell\left(\tfrac{\ell-1}{2}\right)
		-
		S_\ell(\ell-1)
		+
		\sum_{b=0}^{\frac{\ell-5}{2}} (b+1)S_\ell(b+1) - (b+2)S_\ell(\ell-2-b) 
	\bigg)
	\pmod{\ell}
	.
\end{align}
Lemmas \ref{MainLemmaForSSeries} and \ref{SecondLemmaForSSeries} give us
congruent expressions for $U(q)$ and $V(q)$ that 
consist of at most $\frac{\ell+1}{2}$ of the
$T(z,w,q)$ series that each contribute to only one of the
$\ell$-terms of the dissection
and infinite products which we can handle separately from the 
$T(z,w,q)$ series. That is to say, Lemmas \ref{MainLemmaForSSeries} and 
\ref{SecondLemmaForSSeries} turn the proof of Theorem \ref{TheoremMain}
into a verification of congruences and identities between infinite products.
We now proceed with the proof of Theorem \ref{TheoremMain}.

\section{Proof of Theorem \ref{TheoremMain}.}

\begin{proof}[Proof of (\ref{EqTheoremUMod3}) and (\ref{EqTheoremVMod3}).]
We recall that here $\ell=3$ and $P(a)=[q^{3a};q^9]_\infty$.
We have that
\begin{align*}
	U(q)
	&\equiv
	\frac{-1}{2E(1)^3}	
   (S_3(0) + 2S_3(1))
	\pmod{3}
	,\\
	V(q)
	&\equiv
	\frac{-1}{2E(1)^3}	
   (S_3(1) + 2S_3(2))
	\pmod{3}
	.
\end{align*}
We expand $S_3(0)$ and $S_3(1)$ with Lemma \ref{MainLemmaForSSeries} with $m=1$
and $S_3(2)$ with Lemma \ref{SecondLemmaForSSeries} with $m=1$. After elementary
simplifications we have that
\begin{align*}
   S_3(0) + 2_3S(1)
	&\equiv
		\frac{qE(9)^2}{P(1)}
		+
		\frac{2q^2E(1)^3 T(3, 3, 9)}{E(9)P(1)} 
	\pmod{3}
	,\\
	S_3(1) + 2S_3(2)
	&\equiv
		\frac{2q^3E(1)^3 T(6, 3, 9)}{E(9)P(1) }
		+
		\frac{q^2E(1)^3 T(3, 3 ,9)}{E(9)P(1)}		
	\pmod{3}
.
\end{align*}
But $E(1)^3=\aqprod{q}{q}{\infty}^3\equiv\aqprod{q^3}{q^3}{\infty}=E(3)\pmod{3}$, and so we see
that
\begin{align*}
	U(q)
	&\equiv
		\frac{qE(9)^2}{E(3)P(1)}
		+
		\frac{2q^2 T(3, 3, 9)}{E(9)P(1)} 
	\pmod{3}
	,\\
	V(q)
	&\equiv
		\frac{2q^3 T(6, 3, 9)}{E(9)P(1) }
		+
		\frac{q^2 T(3, 3 ,9)}{E(9)P(1)}
	\pmod{3}
,
\end{align*}
which are (\ref{EqTheoremUMod3}) and (\ref{EqTheoremVMod3}), respectively.

\end{proof}

\begin{proof}[Proof of (\ref{EqTheoremUMod5}).]
We recall here $\ell=5$ and $P(a)=[q^{5a};q^{25}]_\infty$.
Expanding (\ref{EqUToS}), when $\ell=5$, with Lemmas \ref{MainLemmaForSSeries} 
and \ref{SecondLemmaForSSeries} (with $m=1$ in each application), yields that
\begin{align}
	\label{EqUMod5Eq1}
	E(1)^3U(q)
	&\equiv
		\frac{2q^3 E(25)^2}{P(2)}
		+
		\frac{q E(25)^2 P(2)}
			{P(1)^2}
		+\frac{4q^2 E(1)^3 T(5, 5, 25)}
			{E(25) P(1)}
		+
		\frac{3q^2 E(25)^2}{P(1)}
		+
		\frac{4q^4 E(1)^3T(10, 5, 25)}
			{E(25) P(1)}	
	\nonumber\\&\quad		
	\pmod{5}		
\end{align}
Here we note that 
\begin{align*}
	\frac{1}{E(1)^3}
	=
	\frac{1}{\aqprod{q}{q}{\infty}^{3}}
	&\equiv  
	\frac{\aqprod{q}{q}{\infty}^2}{\aqprod{q^5}{q^5}{\infty}}
	=
	\frac{E(1)^2}{E(5)}
	\pmod{5}
.
\end{align*}

There are various ways to find the $5$-dissection of $E(1)$. We use
\cite[Entry 12(v)]{Berndt}, which is
\begin{align}
	\label{EqEDissection5}
	E(1)
	&=
	E(25)
	\left(
		\frac{P(2)}{P(1)}
		-
		q
		-
		\frac{q^2 P(1)}{P(2)}
	\right)		
.
\end{align}
Squaring (\ref{EqEDissection5}) gives that
\begin{align}
	\label{EqUMod5Eq2}
	E(1)^2
	&=
	E(25)^2
	\left(
		\frac{P(2)^2}{P(1)^2}
		-
		\frac{2q P(2)}{P(1)}
		-
		q^2
		+
		\frac{2q^3 P(1)}{P(2)}
		+
		\frac{q^4 P(1)^2}{P(2)^2}
	\right)
.
\end{align}
A direct calculation then gives that
\begin{align*}
	&E(1)^2
	\left(
		\frac{2q^3 E(25)^2}{P(2)}
		+
		\frac{q E(25)^2 P(2)}
			{P(1)^2}
		+
		\frac{3q^2 E(25)^2}{P(1)}
	\right)
	\\
	&\equiv
	E(25)^4
	\left(	
		\frac{q P(2)^3}{P(1)^4}
		+
		\frac{2q^6 P(1)}{P(2)^2}
		+
		\frac{q^2 P(2)^2}{P(1)^3}
		+
		\frac{2q^7 P(1)^2}{P(2)^3}
	\right)
	\pmod{5}		
.
\end{align*}
Thus multiplying (\ref{EqUMod5Eq1}) by (\ref{EqUMod5Eq2}) and reducing modulo $5$
yields
\begin{align*}
	U(q)
	&\equiv
		\frac{q E(25)^4P(2)^3}
			{E(5)P(1)^4}
		+
		\frac{2q^6 E(25)^4P(1)}
			{E(5)P(2)^2}
		+
		\frac{4q^2 T(5, 5, 25)}{E(25) P(1)} 
		+
		\frac{q^2 E(25)^4 P(2)^2}
			{E(5) P(1)^3}
		+
		\frac{2q^7 E(25)^4 P(1)^2}
			{E(5) P(2)^3}
		\\&\quad
		+
		\frac{4q^4 T(10, 5, 25)}{E(25) P(1)}	
	\pmod{5}
.
\end{align*}
We must reduce the products to complete the proof of (\ref{EqTheoremUMod5}).
For this we note that cubing (\ref{EqEDissection5}) gives
\begin{align*}
	E(1)^3 
	&=
	E(25)^3
	\left(
	\frac{P(2)^3}{P(1)^3}
	-\frac{3qP(2)^2}{P(1)^2}
	-\frac{q^6 P(1)^3}{P(2)^3}
	+5q^3
	-\frac{3q^5 P(1)^2}{P(2)^2}
	\right)
,
\end{align*}
whereas the $\ell=5$ case of (\ref{EqECubedModL}) gives that
\begin{align*}
	E(1)^3
	&\equiv
	E(25)(2qP(1)+P(2))
	\pmod{5}.
\end{align*}
By comparing the coefficients of $q$ we have that
\begin{align*}
	E(25)^3 \left( \frac{2qP(2)^2}{P(1)^2} + \frac{4q^6 P(1)^3}{P(2)^3} \right)
	&\equiv
	2qE(25)P(1)
	\pmod{5}
	,
\end{align*}
so that
\begin{align}
	\label{EqUMod5ProductReduction}
	\frac{P(2)^2}{P(1)^3} + \frac{2q^5P(1)^2}{P(2)^3}
	&\equiv
	\frac{1}{E(25)^2}
	\pmod{5}
	.
\end{align}
Thus
\begin{align*}
	\frac{q E(25)^4P(2)^3}{E(5)P(1)^4}
	+
	\frac{2q^6 E(25)^4P(1)}{E(5)P(2)^2}
	&\equiv
	\frac{qE(25)^2P(2)}{E(5)P(1)}	
	\pmod{5}	
	,\\
	\frac{q^2 E(25)^4P(2)^2}{E(5)P(1)^3}
	+
	\frac{2q^7 E(25)^4P(1)^2}{E(5)P(2)^3}
	&\equiv
	\frac{q^2E(25)^2}{E(5)}	
	\pmod{5},
\end{align*}
which finishes the proof of (\ref{EqTheoremUMod5}).

\end{proof}

\begin{proof}[Proof of (\ref{EqTheoremVMod5}).]
We recall that here $\ell=5$ and $P(a)=[q^{5a};q^{25}]_\infty$.
Expanding (\ref{EqVToS}),
when $\ell=5$, with Lemmas \ref{MainLemmaForSSeries} 
and \ref{SecondLemmaForSSeries} (with $m=1$ in each application and viewing
$S_\ell(\ell-1)$ as $S_\ell(\ell-0-1)$), yields that
\begin{align*}
	E(1)^3 V(q)	
	&\equiv
		\frac{4q^5 E(1)^3 T(15, 5, 25) }
			{E(25)P(1) }
		+
		\frac{q^2 E(1)^3 T(5, 5, 25)}
			{E(25) P(1)} 
		+
		\frac{4q^3 E(25)}{ P(2)}
		+
		\frac{3q^4 E(25)^2 P(1)}
			{P(2)^2}
	\pmod{5}.
\end{align*}
Again we use that 
$\frac{1}{\aqprod{q}{q}{\infty}^3}\equiv\frac{\aqprod{q}{q}{\infty}^2}{\aqprod{q^5}{q^5}{\infty}}\pmod{5}$
and with (\ref{EqUMod5Eq2}) we have that
\begin{align*}
	E(1)^2
	\left(
		\frac{4q^3 E(25)^2}{ P(2)}
		+
		\frac{3q^4 E(25)^2 P(1)}
			{P(2)^2}
	\right)
	&\equiv	
	E(25)^4
	\left(
		\frac{4q^3 P(2)}{P(1)^2}
		+
		\frac{3q^8 P(1)^3}{P(2)^4}
	\right)
	\pmod{5}.
\end{align*}
Thus 
\begin{align*}
	V(q)	
	&\equiv
		\frac{4q^5 T(15, 5, 25)}{E(25) P(1) }
		+
		\frac{q^2 T(5, 5, 25)}{E(25) P(1) } 
		+	
		\frac{4q^3 E(25)^4 P(2) }
			{E(5) P(1)^2}
		+
		\frac{3q^8 E(25)^4 P(1)^3}
			{E(5) P(2)^4}
	\pmod{5}.
\end{align*}
However, by (\ref{EqUMod5ProductReduction}) we see that
\begin{align*}
	\frac{4q^3 E(25)^4 P(2) }{E(5) P(1)^2}
	+
	\frac{3q^8 E(25)^4 P(1)^3}{E(5) P(2)^4}
	&\equiv
	\frac{4q^3 E(25)^2 P(1) }{E(5) P(2)}
	\pmod{5},
\end{align*}
which finishes the proof of (\ref{EqTheoremVMod5}).

\end{proof}

\begin{proof}[Proof of (\ref{EqTheoremUMod7}).]
We recall that here $\ell=7$ and so $P(a)=[q^{7a};q^{49}]_\infty$.
We expand the $\ell=7$ case of (\ref{EqUToS}) with Lemmas
\ref{MainLemmaForSSeries} and \ref{SecondLemmaForSSeries}, using
$m=1$ in each application. This gives that
\begin{align}
	\label{EqMod7UEq1}
	E(1)^3 U(q)
	&\equiv
		\frac{5q^7 E(49)^2 P(2)}{P(3)^2}
		+
		\frac{3q E(49)^2 P(2)}{P(1)^2}
		+
		\frac{5q E(1)^3 T(7, 7, 49)}{E(49)P(1)}  
		+
		\frac{6q^8 E(49)^2 P(1)}{P(2) P(3)}
		+
		\frac{2q^2 E(49)^2 P(2)^2}{P(1)^2 P(3)}
		\nonumber\\&\quad		
		+
		\frac{q^2 E(49)^2 P(3)}{P(1) P(2)}
		+
		\frac{2q^4 E(1)^3 T(14, 7, 49)}{E(49) P(1)}
		+
		\frac{5q^4 E(49)^2P(3)}{P(2)^2}
		+	
		\frac{4q^5 E(49)^2}{P(2)}
		+
		\frac{q^6 E(49)^2}{P(3)}
		\nonumber\\&\quad
		+
		\frac{4q^6 E(1)^3 T(21, 7, 49)}{E(49) P(1)} 
	\pmod{7}.
\end{align}

We note that
\begin{align*}
	\frac{1}{E(1)^3}
	&=
	\frac{1}{\aqprod{q}{q}{\infty}^3}
	\equiv
	\frac{\aqprod{q}{q}{\infty}^4}{\aqprod{q^7}{q^7}{\infty}}
	=\frac{E(1)^4}{E(7)}
	\pmod{7}.
\end{align*}
There are various ways to deduce the $7$-dissection of $E(1)$,
we use \cite[Entry 17(v)]{Berndt}, which is
\begin{align*}
	E(1)
	&=
	E(49)
	\left(
		\frac{P(2)}{P(1)}
		-\frac{q P(3)}{P(2)}
		-q^2
		+\frac{q^5 P(1)}{P(3)}
	\right)
	.
\end{align*}
By (\ref{EqECubedModL}) with $\ell=7$, 
\begin{align*}
	E(1)^3
	&\equiv
	E(49)
	\left(	
		5q^3P(1)
		+4qP(2)
		+P(3)
	\right)
	\pmod{7}
	.
\end{align*}
Thus
\begin{align*}
	E(1)^4
	&\equiv
	E(49)^2	
	\left(	
		\frac{P(2)P(3)}{P(1)}
		+
		\frac{4q P(2)^2}{P(1)}
		+
		\frac{6q P(3)^2}{P(2)}
		+
		\frac{5q^8 P(1)^2}{P(3)}
		+
		2q^2P(3)
		+
		q^3P(2)
		+
		\frac{2q^4 P(3)P(1)}{P(2)}
		\right.\nonumber\\&\quad\left.		
		+
		3q^5P(1)	
		+	
		\frac{4q^6 P(1)P(2)}{P(3)}	
	\right)
	\pmod{7}
	.
%checked in maple
\end{align*}
We can simplify this slightly. We use \cite[Lemma 4]{AS}, which is
\begin{align*}
	P(b)^2P(c+d)P(c-d)
	-P(c)^2P(b+d)P(b-d)
	+q^{7(c-d)}P(d)^2P(b+c)P(b-d)
	&=
	0,
\end{align*}
With $b=3$, $c=2$, and $d=1$ this is
\begin{align}
	\label{EqUMod7AsIdent}
	P(3)^3P(1)
	-P(2)^3P(3)
	+q^7P(1)^3P(2)
	&=0
	,
\end{align}
which implies that
\begin{align*}
	\frac{5q^8 P(1)^2}{P(3)}
	&\equiv
	\frac{5q P(2)^2}{P(1)}
	+
	\frac{2q P(3)^2}{P(2)}
	\pmod{7}
.
\end{align*}
Thus
\begin{align}
	\label{EqMod7UEq2}	
	E(1)^4
	&\equiv
	E(49)^2	
	\left(	
		\frac{P(2)P(3)}{P(1)}
		+
		\frac{2q P(2)^2}{P(1)}
		+
		\frac{q P(3)^2}{P(2)}
		+
		2q^2P(3)
		+
		q^3P(2)
		+
		\frac{2q^4 P(3)P(1)}{P(2)}
		+
		3q^5P(1)
		\nonumber\right.\\&\quad\left.		
		+	
		\frac{4q^6 P(1)P(2)}{P(3)}	
	\right)
	\pmod{7}
	.
%checked in maple
\end{align}
In fact it would appear that we can do even better. Calculations suggest that
\begin{align*}
	\frac{4q P(2)^2}{P(1)}
	+
	\frac{6q P(3)^2}{P(2)}
	+
	\frac{5q^8 P(1)^2}{P(3)}
	&\equiv
	\frac{3q E(7)^4}{E(49)^2}
	\pmod{7}
	,
\end{align*}
however for our applications this form is actually not as usual.	
Multiplying (\ref{EqMod7UEq1}) by (\ref{EqMod7UEq2}) and collecting terms yields that 
\begin{align*}
	U(q)
	&\equiv
    \frac{5q T(7, 7, 49)}{E(49)P(1)} 
	+
	\frac{2q^4 T(14, 7, 49)}{E(49)P(1)}
	+
	\frac{4q^6 T(21, 7, 49)}{E(49)P(1)} 
	+
	A_7(q)
	\pmod{7},
\end{align*}
where
\begin{align*}
	A_7(q) &=	
	\frac{\aqprod{q^{49}}{q^{49}}{\infty}^4}{\aqprod{q^7}{q^7}{\infty}}	
	\left(
	A_{7,0}(q^7) + qA_{7,1}(q^7) + q^2A_{7,2}(q^7) + q^3A_{7,3}(q^7)
	 	+ q^4A_{7,4}(q^7) + q^5A_{7,5}(q^7) 
		\right.\\&\quad\left.	 	
	 	+ q^6A_{7,6}(q^7)
	\right)
	,\\
	A_{7,0}(q^7)
	&= 
		\frac{4q^7 P(2)^2}{P(1) P(3)}
		+\frac{3q^7 P(3)}{P(2)}
		+\frac{3q^{14} P(1)^2}{P(3)^2}
	,\\ 
	A_{7,1}(q^7)
	&=	
		\frac{3 P(2)^2 P(3)}{P(1)^3}
		+\frac{4q^7 P(2)^3}{P(1) P(3)^2}
		+\frac{3q^7 P(1) P(3)^2}{ P(2)^3}
	,\\
	A_{7,2}(q^7)
	&=	
		\frac{4 P(3)^2}{P(1)^2}
		+\frac{P(2)^3}{P(1)^3}
		+\frac{q^7 P(1) P(3)}{P(2)^2}
		+\frac{2q^7 P(2)}{P(3)}
	,\\
	A_{7,3}(q^7)
	&=
		\frac{3 P(2) P(3)}{P(1)^2}
		+\frac{4 P(2)^4}{P(1)^3 P(3)}
		+\frac{P(3)^3}{P(2)^2 P(1)}
		+\frac{4q^7 P(1)}{P(2)}
		+\frac{5q^7 P(2)^2}{P(3)^2}
	,\\
	A_{7,4}(q^7)
	&=
		0	
	,\\
	A_{7,5}(q^7)
	&=
		\frac{2 P(2)^3}{P(1)^2 P(3)}
		+\frac{5 P(3)^3}{P(2)^3}
		+\frac{5q^7 P(1)^2}{P(2)^2}
		+\frac{5q^7 P(1) P(2)}{P(3)^2}	
	,\\
	A_{7,6}(q^7)
	&=
		\frac{2 P(3)^2}{P(2)^2}
		+\frac{P(2)}{P(1)}
		+\frac{4q^7 P(1)^2}{P(2) P(3)}
		+\frac{6q^7 P(1) P(2)^2}{P(3)^3}				
.	
\end{align*}
Upon verifying that $A_0\equiv A_5\equiv 0\pmod{7}$, we find the above is
equivalent to (\ref{EqTheoremUMod7}).
Multiplying (\ref{EqUMod7AsIdent}) by $\frac{3q^7}{P(1)P(2)P(3)^2}$ yields 
$A_0\equiv0\pmod{7}$.
Multiplying (\ref{EqUMod7AsIdent}) by $\frac{5}{P(1)P(2)^3}$
and $\frac{5}{P(1)^2P(3)^2}$,  gives that
\begin{align*}
	5\frac{P(3)^3}{P(2)^3} -5\frac{P(3)}{P(1)} + 5q^7\frac{P(1)^2}{P(2)^2}  &= 0
	,\\
	5\frac{P(3)}{P(1)} -5\frac{P(2)^3}{P(1)^2P(3)} + 5q^7\frac{P(1)P(2)}{P(3)^2}  &= 0
	,
\end{align*}
which when added together imply $A_5\equiv 0\pmod{7}$.
While we can use (\ref{EqUMod7AsIdent}) to reduce $A_1$, $A_2$, $A_3$, 
and $A_6$, it would appear that the reductions are not significant and so we 
leave these terms as they are.

\end{proof}

\begin{proof}{Proof of (\ref{EqTheoremVMod7}).}
We recall that here $\ell=7$ and $P(a)=[q^{7a};q^{49}]_\infty$.
We expand the $\ell=7$ case of (\ref{EqVToS}) with Lemmas
\ref{MainLemmaForSSeries} and \ref{SecondLemmaForSSeries},
with $m=1$ in each case, to find that
\begin{align}
	\label{EqMod7VEq1}
	E(1)^3V(q)
	&\equiv
	\frac{6 q^7 E(1)^3 T(28, 7, 49)}{E(49) P(1)}
	+\frac{q^7 E(49)^2 P(2)}{P(3)^2}
	+\frac{2 q E(1)^3 T(7, 7, 49)}{E(49) P(1)}
	+\frac{5 q E(49)^2 P(2)}{P(1)^2}
	\nonumber\\&\quad	
	+\frac{4 q^8 E(49)^2 P(1)}{P(2) P(3)}
	+\frac{6 q^2 E(49)^2 P(3)}{P(1) P(2)}	
	+\frac{q^2 E(49)^2 P(2)^2}{P(1)^2 P(3)}
	+\frac{q^3 E(49)^2}{P(1)}
	+\frac{q^4 E(1)^3 T(14, 7, 49)}{E(49) P(1)}
	\nonumber\\&\quad	
	+\frac{3 q^4 E(49)^2 P(2)}{P(1) P(3)}
	+\frac{3 q^4 E(49)^2 P(3)}{P(2)^2}
	+\frac{2 q^5 E(49)^2}{P(2)}
	+\frac{5 q^6 E(1)^3 T(21, 7, 49)}{E(49) P(1)}
	+\frac{3 q^6 E(49)^2}{P(3)}
\nonumber\\&\quad
\pmod{7}.
\end{align}
We multiply (\ref{EqMod7VEq1}) by (\ref{EqMod7UEq2}) to find that
\begin{align*}
	V(q)
	&\equiv
	\frac{6 q^7 T(28, 7, 49)}{E(49)P(1)}
	+\frac{2 q T(7, 7, 49)}{E(49)P(1)}
	+\frac{5 q^6 T(21, 7, 49)}{E(49) P(1)}
	+\frac{q^4 T(14, 7, 49)}{E(49)P(1)}
	+B_7(q)
	\pmod{7}
,
\end{align*}
where
\begin{align*}
	B_7(q)
	&=
	\frac{E(49)^4}{E(7)}\bigg(
		B_{7,0}(q^7)+qB_{7,1}(q^7)+q^2B_{7,2}(q^7)+q^3B_{7,3}(q^7)
		+q^4B_{7,4}(q^7)+q^5B_{7,5}(q^7)+q^6B_{7,6}(q^7)
	\bigg)
	,\\
	B_{7,0}(q^7)
	&=
		\frac{2 q^7 P(3)}{P(2)}
		+\frac{5 P(2)^2 q^7}{P(3) P(1)}
		+\frac{2 q^{14} P(1)^2}{P(3)^2}
	,\\
	B_{7,1}(q^7)
	&=
		\frac{5 P(2)^2 P(3)}{P(1)^3}
		+\frac{6 q^7 P(2)^3}{P(1) P(3)^2}
		+\frac{6 q^7 P(1) P(3)^2}{P(2)^3}
		+4 q^7
	,\\
	B_{7,2}(q^7)
	&=
		\frac{4 P(3)^2}{P(1)^2}
		+\frac{4 P(2)^3}{P(1)^3}
		+\frac{5 q^7 P(2)}{P(3)}
		+\frac{3 q^7 P(3) P(1)}{P(2)^2}
	,\\
	B_{7,3}(q^7)
	&=
		\frac{2 P(2)^4}{P(1)^3 P(3)}
		+\frac{6 P(3)^3}{P(2)^2 P(1)}
		+\frac{3 P(2) P(3)}{P(1)^2}
		+\frac{4 q^7 P(1)}{P(2)}
		+\frac{6 q^7 P(2)^2}{P(3)^2}
	,\\
	B_{7,4}(q^7)
	&=
		\frac{5 P(2)^2}{P(1)^2}
		+\frac{2 P(3)^2}{P(2) P(1)}
		+\frac{2 q^7 P(1)}{P(3)}
	,\\
	B_{7,5}(q^7)
	&=
		\frac{P(3)}{P(1)}
		+\frac{3 P(3)^3}{P(2)^3}
		+\frac{q^7 P(1)^2}{P(2)^2}
		+\frac{q^7 P(1) P(2)}{P(3)^2}
	,\\
	B_{7,6}(q^7)
	&=
		\frac{3 P(2)}{P(1)}
		+\frac{6 P(3)^2}{P(2)^2}
		+\frac{5 q^7 P(1)^2}{P(2) P(3)}
		+\frac{4 q^7 P(1) P(2)^2}{P(3)^3}
.
\end{align*}
We use a computer algebra system (in particular we used Maple) to reduce the
$B_{7,i}$
with (\ref{EqUMod7AsIdent}). By doing so we find each $B_{7,i}$ reduces modulo 
$7$ to the corresponding products in (\ref{EqTheoremVMod7}), which completes
the proof of (\ref{EqTheoremVMod7}).

\end{proof}

\begin{proof}[Proof of (\ref{EqTheoremUMod13}).]
We recall here $\ell=13$ and $P(a)=[q^{13a};q^{169}]_\infty$.
We expand the $\ell=13$ case of (\ref{EqUToS}) with Lemmas
\ref{MainLemmaForSSeries} and \ref{SecondLemmaForSSeries},
using $m=1$ in each application, to obtain 
\begin{align}
	\label{EqMod13UEq1}
	E(1)^3U(q)
	&\equiv
	\tfrac{12q^{13}E(169)^2 P(2)}{P(1) P(4)}
	+\tfrac{12q^{13}E(169)^2 P(2) P(4)}{P(1) P(3) P(6)}
	+\tfrac{9q^{13}E(169)^2 P(5)}{P(3) P(4)}
	+\tfrac{8q^{13}E(169)^2 P(3) P(4)}{P(1) P(5)^2}
	+\tfrac{9qE(169)^2 P(5)}{P(1) P(3)}
	\nonumber\\&\quad	
	+\tfrac{qE(169)^2 P(2)}{P(1)^2}
	+\tfrac{3qE(169)^2 P(3) P(6)}{P(1) P(2) P(5)}
	+\tfrac{12q^{14}E(169)^2 P(4)}{P(2) P(6)}
	+\tfrac{q^2E(169)^2 P(3)}{P(1) P(2)}
	+\tfrac{2q^2E(169)^2 P(2) P(5)}{P(1)^2 P(6)}
	\nonumber\\&\quad	
	+\tfrac{7q^{15}E(169)^2}{P(3)}
	+\tfrac{4q^{15}E(169)^2 P(3)^2}{P(1) P(4) P(6)}
	+\tfrac{2q^{15}E(169)^2 P(2)}{P(1) P(5)}
	+\tfrac{12q^3E(1)^3 T(39, 13, 169)}{E(169) P(1)}
	\nonumber\\&\quad	
	+\tfrac{3q^3E(169)^2 P(2) P(4)}{P(1)^2 P(5)}
	+\tfrac{10q^3E(169)^2 P(3) P(5)}{P(1) P(2) P(6)}
	+\tfrac{6q^3E(169)^2 P(4)}{P(1) P(3)}
	+\tfrac{8q^3E(169)^2 P(6)}{P(1) P(4)}
	+\tfrac{7q^{16}E(169)^2 P(2)}{P(1) P(6)}
	\nonumber\\&\quad
	+\tfrac{12q^4E(169)^2 P(5)}{P(1) P(4)}
	+\tfrac{5q^4E(169)^2 P(4)}{P(2)^2}
	+\tfrac{4q^4E(169)^2 P(2) P(3)}{P(1)^2 P(4)}
	+\tfrac{10q^4E(169)^2 P(4) P(5)}{P(1) P(3) P(6)}
	+\tfrac{q^{17}E(169)^2 P(6)}{P(4) P(5)}
	\nonumber\\&\quad
	+\tfrac{10q^{-8}E(1)^3 T(13, 13, 169)}{E(169) P(1)}
	+\tfrac{3q^{-8}E(169)^2 P(5)}{P(1)^2}
	+\tfrac{6q^5E(169)^2 P(3) P(4)}{P(1) P(2) P(6)}
	+\tfrac{5q^5E(169)^2 P(2)^2}{P(1)^2 P(3)}
	\nonumber\\&\quad	
	+\tfrac{9q^{18}E(169)^2 P(2) P(4)}{P(1) P(5) P(6)}
	+\tfrac{10q^{18}E(169)^2}{P(4)}
	+\tfrac{4q^{-7}E(169)^2 P(4) P(6)}{P(1)^2 P(5)}
	+\tfrac{2q^6E(169)^2}{P(1)}
	+\tfrac{9q^6E(169)^2 P(5)}{P(2) P(3)}
	\nonumber\\&\quad	
	+\tfrac{6q^6E(169)^2 P(4)^2}{P(1) P(3) P(6)}
	+\tfrac{7q^{19}E(169)^2 P(2) P(3)}{P(1) P(4) P(6)}
	+\tfrac{11q^7E(1)^3 T(52, 13, 169)}{E(169) P(1)}
	+\tfrac{2q^7E(169)^2 P(3)^2}{P(1) P(2) P(5)}
	\nonumber\\&\quad	
	+\tfrac{11q^7E(169)^2 P(3) P(5)}{P(1) P(4)^2}
	+\tfrac{4q^7E(169)^2 P(2) P(6)}{P(1) P(3) P(4)}
	+\tfrac{3q^{20}E(169)^2}{P(5)}
	+\tfrac{2q^{-5}E(169)^2 P(3) P(5)}{P(1)^2 P(4)}
	+\tfrac{q^8E(169)^2 P(4)}{P(2) P(3)}
	\nonumber\\&\quad	
	+\tfrac{7q^8E(169)^2 P(4)}{P(1) P(5)}
	+\tfrac{5q^{21}E(169)^2}{P(6)}
	+\tfrac{2q^9E(169)^2 P(4)}{P(1) P(6)}
	+\tfrac{11q^9E(169)^2 P(3)}{P(1) P(4)}
	+\tfrac{2q^9E(169)^2 P(2) P(6)}{P(1) P(3) P(5)}
	\nonumber\\&\quad
	+\tfrac{q^{22}E(169)^2 P(5)}{P(6)^2}
	+\tfrac{q^{10}E(1)^3 T(65, 13, 169)}{E(169) P(1)}
	+\tfrac{10q^{10}E(169)^2 P(3) P(5)}{P(1) P(4) P(6)}
	+\tfrac{8q^{10}E(169)^2 P(2)}{P(1) P(3)}
	+\tfrac{4q^{10}E(169)^2 P(6)}{P(2) P(5)}
	\nonumber\\&\quad	
	+\tfrac{10q^{23}E(169)^2 P(4)}{P(5) P(6)}
	+\tfrac{8q^{-2}E(1)^3 T(26, 13, 169)}{E(169) P(1)}
	+\tfrac{5q^{-2}E(169)^2 P(4) P(6)}{P(1) P(2) P(5)}
	+\tfrac{5q^{-2}E(169)^2 P(2) P(4)}{P(1)^2 P(3)}
	\nonumber\\&\quad	
	+\tfrac{q^{11}E(169)^2 P(3)}{P(1) P(5)}
	+\tfrac{7q^{11}E(169)^2}{P(2)}
	+\tfrac{5q^{-1}E(169)^2 P(3) P(6)}{P(1) P(2) P(4)}
	+\tfrac{6q^{-1}E(169)^2 P(4)}{P(1) P(2)}
	+\tfrac{4q^{12}E(1)^3 T(78, 13, 169)}{E(169) P(1)}
	\nonumber\\&\quad	
	+\tfrac{8q^{12}E(169)^2 P(5)}{P(2) P(6)}
	+\tfrac{11q^{12}E(169)^2 P(2) P(4)}{P(1) P(3) P(5)}
	\pmod{13}
.
\end{align}

We note that
\begin{align*}
	\frac{1}{E(1)^3}
	&=
	\frac{1}{\aqprod{q}{q}{\infty}^3}
	\equiv
	\frac{\aqprod{q^{10}}{q^{10}}{\infty}^{10}}{\aqprod{q^{13}}{q^{13}}{\infty}}
	=
	\frac{E(1)^{10}}{E(13)}
.
\end{align*}
By \cite[(51)]{BilgiciEkin}, with a small correction to the $q^7$ term,
we have that
\begin{align*}
	E(q)^{10}
	&\equiv
	E(169)^2
	\left(	
		P(2)P(4)P(5)P(6)
		+3qP(3)^2 P(4)P(6)	
        +9q^2P(1)P(5)P(6)^2
		\right.\\&\quad
		+9q^3P(2)P(3)P(5)P(6)
        +12q^4P(2)P(3)P(5)^2
        +11q^5P(2)P(3)P(4)P(6)
		+6q^5P(1)P(4)P(5)P(6)        
		\\&\quad		
		+5q^{18}P(1)P(2)P(3)P(5)
		+q^{32}P(1)^2P(2)P(3)
        +9q^{20}P(1)P(2)P(3)P(4)        		
        +4q^8P(1)P(4)^2P(5)
        \\&\quad
        +10q^9P(2)^2P(4)P(6)
		+q^{10}P(1)P(3)P(4)P(6)
		+10q^{11}P(1)P(3)P(4)P(5)   
		\\&\quad\left.    
		+3q^{12}P(1)P(2)P(5)P(6)                
	\right)
	\pmod{13}
	%maple agrees
\end{align*}
As noted in \cite{ObrienThesis} and \cite{BilgiciEkin},
one can use \cite[(LVII2)]{MolkTannery}, to get that
\begin{align}
	\label{MolkTanneryProductId}
	0
	&=
	P(a+d)P(a-d)P(b+c)P(b-c)
	-P(a+c)P(a-c)P(b+d)P(b-d)
	\nonumber\\&\quad
	+q^{13(b-c)}P(a+b)P(a-b)P(c+d)P(c-d)
	.
\end{align}
Setting $a=5$, $b=3$, $c=2$, and $d=1$ in (\ref{MolkTanneryProductId})
gives
\begin{align*}
	P(6)P(4)P(5)P(1)-P(6)P(3)P(4)P(2)+q^{13}P(5)P(2)P(3)P(1)=0
	, 
\end{align*}
so that
\begin{align}
	\label{EqMod13UEq2}
	E(q)^{10}
	&\equiv
	E(169)^2
	\left(	
		P(2)P(4)P(5)P(6)
		+3qP(3)^2 P(4)P(6)	
        +9q^2P(1)P(5)P(6)^2
		\right.\nonumber\\&\quad
		+9q^3P(2)P(3)P(5)P(6)
        +12q^4P(2)P(3)P(5)^2
        +3q^5P(2)P(3)P(4)P(6)
		+q^5P(1)P(4)P(5)P(6)        
		\nonumber\\&\quad		
		+q^{32}P(1)^2P(2)P(3)
        +9q^{20}P(1)P(2)P(3)P(4)        		
        +4q^8P(1)P(4)^2P(5)
        \nonumber\\&\quad
        +10q^9P(2)^2P(4)P(6)
		+q^{10}P(1)P(3)P(4)P(6)
		+10q^{11}P(1)P(3)P(4)P(5)   
		\nonumber\\&\quad\left.    
		+3q^{12}P(1)P(2)P(5)P(6)                
	\right)
	\pmod{13}
	%maple agrees
\end{align}
Again it would appear we could do even perhaps, calculations suggest that
\begin{align*}
	11q^5P(2)P(3)P(4)P(6)
	+6q^5P(1)P(4)P(5)P(6)        		
	+5q^{18}P(1)P(2)P(3)P(5)
	&\equiv
	\frac{E(13)^{10}}{E(169)^2}
,
\end{align*}
however this form is not as useful for our calculations.
We multiply (\ref{EqMod13UEq1}) by (\ref{EqMod13UEq2}),
collect terms, and reduce modulo $13$, to find that
\begin{align*}
	U(q)
	&\equiv
	\tfrac{12q^3 T(39, 13, 169)}{E(169) P(1)}
	+\tfrac{10q^{-8} T(13, 13, 169)}{E(169) P(1)}
	+\tfrac{11q^7 T(52, 13, 169)}{E(169) P(1)}
	+\tfrac{q^{10} T(65, 13, 169)}{E(169) P(1)}	
	+\tfrac{8q^{-2} T(26, 13, 169)}{E(169) P(1)}
	\\&\quad	
	+\tfrac{4q^{12} T(78, 13, 169)}{E(169) P(1)}
	+A'_{13}(q)
	\pmod{13}
,
\end{align*}
where
\begin{align*}
	A'_{13}(q) &= 
		\frac{E(169)^4}{E(13)}
		\bigg(
		A'_{13,0}(q^{13})+qA'_{13,1}(q^{13})+q^2A'_{13,2}(q^{13})+q^3A'_{13,3}(q^{13})
		+q^4A'_{13,4}(q^{13})
		\\&\quad		
		+q^5A'_{13,5}(q^{13})
		+q^6A'_{13,6}(q^{13})
		+q^7A'_{13,7}(q^{13})+q^8A'_{13,8}(q^{13})+q^9A'_{13,9}(q^{13})
		+q^{10}A'_{13,10}(q^{13})
		\\&\quad		
		+q^{11}A'_{13,11}(q^{13})
		+q^{12}A'_{13,12}(q^{13})
		\bigg)
	,\\
	A'_{13,0}(q^{13})
	&=
		\tfrac{6P(2) P(5) P(4) P(6)^2}{P(1) P(3)}
		+\tfrac{2P(5)^2P(3) P(6)}{P(1)}
		+\tfrac{2P(3)^3P(6)^2}{P(1) P(2)}
		+\tfrac{12P(5)^2 P(4)^2}{P(1)}
		+\tfrac{6P(4) P(6)^3}{P(2)}
		\\&\quad		
		+\tfrac{6P(2) P(5) P(3)^2 P(6)}{P(1)^2}
		+\tfrac{5P(4)^2 P(3)^2 P(6)}{P(1) P(2)}
		+\tfrac{2q^{13}P(2)^2 P(5) P(4)^2}{P(1) P(3)}
		+\tfrac{q^{13}P(2) P(5)^2 P(3)^2}{P(1) P(4)}		
		\\&\quad	
		+\tfrac{11q^{13}P(5) P(4)^3 P(3)}{P(2) P(6)}		
		+\tfrac{10q^{13}P(2) P(4)^2 P(3) P(6)}{P(1) P(5)}
		+\tfrac{5q^{13}P(2) P(5)^2 P(4) P(3)}{P(1) P(6)}		
		+\tfrac{q^{13}P(1) P(5) P(4)^2 P(6)}{P(2) P(3)}
		\\&\quad		
		+\tfrac{q^{13}P(2)^3 P(3) P(6)}{P(1)^2}
		+\tfrac{10q^{13}P(2)^2 P(5) P(6)}{P(1)}
		+\tfrac{10q^{13}P(2) P(5)^2 P(6)}{P(3)}		
		+\tfrac{5q^{13}P(5) P(4) P(3)^2}{P(2)}
		\\&\quad		
		+\tfrac{11q^{13}P(1) P(5) P(6)^2}{P(2)}
		+{\scriptstyle q^{13} P(4)^2 P(6)}
		+{\scriptstyle 10q^{13} P(3) P(6)^2}
		+\tfrac{10q^{26}P(2)^2 P(6)^2}{P(5)}
		+\tfrac{12q^{26}P(2) P(4)^3}{P(6)}
		\\&\quad		
		+{\scriptstyle 7q^{26}P(2) P(4) P(3)}
		+{\scriptstyle 11q^{26} P(1) P(5) P(4)}		
		+\tfrac{q^{26}P(5) P(3)^3}{P(6)}
		+\tfrac{12q^{26}P(2) P(5)^3 P(3)}{P(6)^2}
		\\&\quad				
		+\tfrac{11q^{39}P(2)^2 P(3)^2}{P(6)}
		+\tfrac{2q^{39}P(1)P(3)^3}{P(5)}
		+\tfrac{4q^{39}P(1) P(2)^2 P(6)}{P(4)}
		+\tfrac{11q^{39}P(1) P(2) P(5) P(3)^2}{P(4)^2}
		+\tfrac{3q^{52}P(1)^2 P(2) P(3)}{P(5)}
	,\\
	A'_{13,1}(q^{13})
	&=
		\tfrac{3 P(4)^3 P(6)}{P(1)}
		+\tfrac{9 P(4) P(3) P(6)^2}{P(1)}
		+\tfrac{2 P(5) P(4) P(6)^2}{P(2)}
		+\tfrac{9 P(2) P(5)^2 P(4) P(6)}{P(1) P(3)}
		+\tfrac{11 P(2)^2 P(5) P(4) P(6)}{P(1)^2}
		\\&\quad		
		+\tfrac{6 P(5) P(3) P(6)^3}{P(2) P(4)}		
		+\tfrac{11 q^{13} P(2)^4 P(4) P(6)}{P(1)^2 P(3)}
		+\tfrac{6 q^{13} P(2)^2 P(4) P(6)^2}{P(1) P(5)}
		+\tfrac{11 q^{13} P(4)^2 P(3)^3 P(6)}{P(1) P(5)^2}
		\\&\quad		
		+\tfrac{10 q^{13} P(1) P(5)^2 P(4)^2}{P(2) P(3)}
		+\tfrac{9 q^{13} P(5)^2 P(4) P(3)^2}{P(2) P(6))}
		+\tfrac{5 q^{13} P(1) P(4)^2 P(3) P(6)}{P(2)^2}
		+\tfrac{2 q^{13} P(2) P(4)^2 P(3)}{P(1)}
		\\&\quad		
		+\tfrac{10 q^{13} P(2) P(4) P(6)^2}{P(3)}
		+\tfrac{4 q^{13} P(2) P(3)^2 P(6)}{P(1)}
		+\tfrac{11 q^{13} P(5) P(4)^4}{P(3) P(6)}
		+\tfrac{7 q^{13} P(1) P(5)^2 P(6)}{P(2)}
		+\tfrac{11 q^{13} P(2)^2 P(5)^2}{P(1)}
		\\&\quad		
		+\tfrac{3 q^{13} P(2) P(5)^3 P(3)^2}{P(1) P(4) P(6)}
		+{\scriptstyle 10 q^{13} P(5) P(3) P(6)}	
		+{\scriptstyle q^{13} P(5) P(4)^2 }
		+\tfrac{8 q^{26} P(1) P(2) P(5) P(6)}{P(3)}
		+\tfrac{12 q^{26} P(2)^3 P(4)^2}{P(1) P(5)}
		\\&\quad		
		+\tfrac{9 q^{26} P(2) P(5) P(3)^2}{P(4)}
		+\tfrac{q^{26} P(1) P(5)^2 P(4)}{P(6)}
		+\tfrac{q^{26} P(1) P(3) P(6)^2}{P(5)}
		+\tfrac{5 q^{26} P(4) P(3)^3}{P(5)}
		+{\scriptstyle 12 q^{26} P(2)^2 P(6) }
		\\&\quad		
		+\tfrac{8 q^{39} P(1) P(2) P(4) P(3)}{P(5)}
		+{\scriptstyle q^{39} P(1)^2 P(4)}
		+\tfrac{5 q^{52} P(1)^2 P(2) P(3)}{P(6)}
	,\\
	A'_{13,2}(q^{13})
	&=
		\tfrac{9P(4)  P(3)^3  P(6)^2}{P(1)  P(2)  P(5)}
		+\tfrac{P(2)^2  P(4)^2  P(6)^2}{P(1)^2  P(5)}
		+\tfrac{3P(2)  P(4)  P(3)^2  P(6)}{P(1)^2}
		+\tfrac{2P(5)  P(4)  P(3)  P(6)}{P(1)}
		+\tfrac{6P(5)  P(3)^2  P(6)^2}{P(1)  P(4)}
		\\&\quad
		+\tfrac{10P(2)^2  P(5)^2  P(4)}{P(1)^2}
		+\tfrac{3q^{13}  P(1)  P(5)^2  P(6)^2}{P(4)  P(3)}
		+\tfrac{11q^{13}  P(1)  P(5)  P(4)^2  P(3)}{P(2)^2}
		+\tfrac{9q^{13}  P(5)^2  P(4)^2}{P(6)}
		\\&\quad		
		+\tfrac{11q^{13}  P(4)^2  P(3)^2}{P(2)}
		+\tfrac{8q^{13}  P(2)^2  P(4)^3}{P(1)  P(3)}
		+\tfrac{3q^{13}  P(2)^2  P(4)  P(6)}{P(1)}
		+\tfrac{2q^{13}  P(2)  P(5)  P(6)^2}{P(4)}
		+\tfrac{6q^{13}  P(4)  P(3)  P(6)^2}{P(5)}
		\\&\quad		
		+\tfrac{4q^{13}  P(1)  P(4)  P(6)^2}{P(2)}
		+{\scriptstyle 9q^{13}  P(5)^2  P(3)}
		+\tfrac{11q^{26}  P(2)  P(4)^2  P(3)}{P(5)}
		+\tfrac{5q^{26}  P(2)^3  P(3)}{P(1)}
		+{\scriptstyle 7q^{26}  P(1)  P(3)  P(6)}
		\\&\quad		
		+{\scriptstyle 8q^{26}  P(2)^2  P(5)}
		+{\scriptstyle 5q^{26}  P(1)  P(4)^2}
		+\tfrac{8q^{39}  P(1)  P(2)  P(4)  P(3)}{P(6)}
		+\tfrac{2q^{39}  P(1)  P(2)^2  P(6)}{P(5)}
		+\tfrac{11q^{39}  P(1)  P(2)  P(3)^2}{P(4)}		
		\\&\quad		
		+\tfrac{q^{52}  P(1)^2  P(2)  P(5)  P(3)}{P(6)^2}
	,\\
	A'_{13,3}(q^{13})
	&=
		\tfrac{6 P(2) P(5) P(4) P(3)^2  }{P(1)^2}
		+\tfrac{6 P(4)^2 P(3) P(6)^2  }{P(1) P(5)}
		+\tfrac{8 P(5)^2 P(3)^2 P(6)  }{P(1) P(4)}
		+\tfrac{3 P(4) P(3)^3 P(6)  }{P(1) P(2)}
		+\tfrac{3 P(5)^2 P(6)^2  }{P(3)}
		\\&\quad		
		+\tfrac{5 P(4)^2 P(6)^2  }{P(2}
		+\tfrac{P(3) P(6)^3  }{P(2)}
		+\tfrac{11 P(2) P(5) P(4)^2 P(6)  }{P(1) P(3)}
		+\tfrac{5 P(2)^2 P(4)^2 P(6)  }{P(1)^2}
		+\tfrac{4 P(2) P(5) P(6)^2  }{P(1)}
		\\&\quad		
		+\tfrac{3 P(5)^2 P(4) P(3)  }{P(1)}
		+\tfrac{q^{13} P(2)^3 P(6)^2  }{P(1) P(3)}
		+\tfrac{4 q^{13} P(1) P(5) P(4)^3  }{P(2) P(3)}
		+\tfrac{8 q^{13} P(5) P(4)^2 P(3)^2  }{P(2) P(6)}
		+\tfrac{9 q^{13} P(1) P(5) P(4) P(6)  }{P(2)}
		\\&\quad
		+\tfrac{12 q^{13} P(3)^4  }{P(1)}
		+\tfrac{9 q^{13} P(2)^2 P(5) P(3) P(6)  }{P(1) P(4)}
		+\tfrac{10 q^{13} P(2) P(4) P(3)^2 P(6)  }{P(1) P(5)}
		+\tfrac{11 q^{13} P(2)^2 P(5) P(4)  }{P(1)}
		\\&\quad		
		+\tfrac{4 q^{13} P(2) P(5)^2 P(4)  }{P(3)}
		+\tfrac{5 q^{13} P(5)^3 P(3)  }{P(6)}
		+{\scriptstyle 8 q^{13} P(4)^3}
		+{\scriptstyle 6 q^{13} P(4) P(3) P(6)}
		+\tfrac{9 q^{26} P(2)^2 P(4) P(6)  }{P(5)}
		\\&\quad		
		+\tfrac{4 q^{26} P(2) P(4)^2 P(3)  }{P(6)}
		+\tfrac{3 q^{26} P(1) P(2) P(6)^2  }{P(4)}
		+\tfrac{7 q^{26} P(1) P(5) P(4)^2  }{P(6)}
		+{\scriptstyle 9 q^{26} P(1) P(5) P(3)}
		+{\scriptstyle 2 q^{26} P(2) P(3)^2}
		\\&\quad		
		+\tfrac{9 q^{39} P(1) P(2) P(5) P(4) P(3)  }{P(6)^2}
		+\tfrac{10 q^{39} P(1) P(2) P(5) P(3)^2  }{P(4) P(6)}
		+\tfrac{4 q^{39} P(1)^2 P(3) P(6)  }{P(5)}
		+{\scriptstyle 8 q^{39} P(1) P(2)^2}
		\\&\quad
		+\tfrac{10 q^{52} P(1)^2 P(2) P(4) P(3)  }{P(5) P(6)}
	,\\
	A'_{13,4}(q^{13})
	&=
		\tfrac{9 P(2) P(4)^2 P(3)^2 P(6)}{P(1)^2 P(5)}
		+\tfrac{7 P(2)^2 P(5) P(3) P(6)}{P(1)^2}
		+\tfrac{10 P(2) P(5)^2 P(4)^2}{P(1) P(3)}
		+\tfrac{4 P(5) P(4) P(3)^3}{P(1) P(2)}
		+\tfrac{3 P(2) P(5)^2 P(6)}{P(1)}
		\\&\quad		
		+\tfrac{11 P(4)^2 P(3) P(6)}{P(1)}
		+\tfrac{11 P(5) P(4)^2 P(6)}{P(2)}
		+\tfrac{P(5) P(3) P(6)^2}{P(2)}
		+\tfrac{P(3)^2 P(6)^2}{P(1)}
		+\tfrac{3 q^{13} P(2) P(4)^2 P(6)}{P(3)}
		\\&\quad		
		+\tfrac{11 q^{13} P(1) P(5) P(6)^2}{P(3)}
		+\tfrac{8 q^{13} P(5) P(3)^2 P(6)}{P(4)}
		+\tfrac{7 q^{13} P(1) P(5)^2 P(4)}{P(2)}
		+\tfrac{4 q^{13} P(2) P(5)^3}{P(4)}
		+\tfrac{3 q^{13} P(5) P(4)^3}{P(6)}
		\\&\quad		
		+\tfrac{2 q^{13} P(2)^3 P(5) P(6)}{P(1) P(3)}
		+\tfrac{12 q^{13} P(2)^2 P(4)^2 P(6)}{P(1) P(5)}
		+\tfrac{q^{13} P(2)^2 P(5)^2 P(4)}{P(1) P(6)}
		+\tfrac{q^{13} P(2)^2 P(5)^2 P(3)}{P(1) P(4)}
		\\&\quad		
		+\tfrac{2 q^{13} P(4) P(3)^3 P(6)}{P(2) P(5)}
		+{\scriptstyle 10 q^{13} P(2) P(6)^2}
		+{\scriptstyle 6 q^{13} P(5) P(4) P(3)}
		+\tfrac{4 q^{26} P(1) P(2) P(5) P(6)}{P(4)}
		\\&\quad		
		+\tfrac{5 q^{26} P(1) P(4) P(3) P(6)}{P(5)}
		+\tfrac{4 q^{26} P(2) P(5) P(3)^2}{P(6)}
		+\tfrac{4 q^{26} P(1) P(5)^2 P(4)^2}{P(6)^2}
		+{\scriptstyle 11 q^{26} P(2)^2 P(4)}
		+\tfrac{q^{39} P(1) P(2) P(3)^2}{P(5)}
		\\&\quad		
		+\tfrac{12 q^{39} P(1) P(2) P(4)^2 P(3)}{(P(5) P(6)}
		+{\scriptstyle 7 q^{39} P(1)^2 P(3)}
	,\\
	A'_{13,5}(q^{13})
	&=
		+\tfrac{3 P(2) P(5)^2 P(4) P(6)}{q^{13} P(1)^2}
		+\tfrac{5 P(2)^3 P(5) P(4) P(6)}{P(1)^2 P(3)}
		+\tfrac{4 P(2) P(5)^3}{P(1)}
		+\tfrac{7 P(5) P(6)^3}{P(4)}
		+\tfrac{4 P(2)^2 P(5)^2 P(3)}{P(1)^2}
		\\&\quad		
		+\tfrac{12 P(2) P(3)^3 P(6)}{P(1)^2}
		+\tfrac{10 P(5) P(4)^2 P(3)}{P(1)}
		+\tfrac{2 P(5) P(4) P(6)^2}{P(3)}
		+\tfrac{5 P(5) P(3)^2 P(6)}{P(1)}
		+\tfrac{12 P(5)^2 P(3) P(6)}{P(2)}
		\\&\quad		
		+\tfrac{2 P(4)^2 P(3)^2 P(6)}{P(2)^2}
		+\tfrac{10 q^{13} P(2) P(5) P(3)^3}{P(1) P(4)}
		+\tfrac{6 q^{13} P(2)^2 P(4)^2}{P(1)}
		+\tfrac{3 q^{13} P(3)^2 P(6)^2}{P(5)}
		+\tfrac{6 q^{13} P(5)^2 P(3)^2}{P(4)}
		\\&\quad		
		+\tfrac{7 q^{13} P(4) P(3)^3}{P(2)}
		+\tfrac{8 q^{13} P(2)^2 P(3) P(6)}{P(1)}
		+\tfrac{10 q^{13} P(2) P(5) P(4)^2}{P(3)}
		+\tfrac{8 q^{13} P(4)^2 P(3) P(6)}{P(5)}
		+\tfrac{2 q^{13} P(5)^2 P(4) P(3)}{P(6)}
		\\&\quad		
		+\tfrac{10 q^{13} P(1) P(5)^2 P(6)}{P(3)}
		+\tfrac{4 q^{13} P(1) P(4)^2 P(6)}{P(2)}
		+\tfrac{7 q^{13} P(2)^3 P(4) P(6)^2}{P(1) P(5) P(3)}
		+\tfrac{11 q^{13} P(2) P(4)^2 P(3)^2 P(6)}{P(1) P(5)^2}
		\\&\quad		
		+\tfrac{q^{26} P(1) P(4)^3}{P(6)}
		+\tfrac{10 q^{26} P(2)^2 P(5) P(4)}{P(6)}
		+\tfrac{8 q^{26} P(2)^2 P(5) P(3)}{P(4)}
		+\tfrac{9 q^{26} P(2) P(4) P(3)^2}{P(5)}
		+\tfrac{5 q^{26} P(1) P(3)^2 P(6)}{P(4)}
		\\&\quad		
		+\tfrac{11 q^{39} P(1) P(2)^2 P(4)}{P(5)}
		+\tfrac{8 q^{39} P(1)^2 P(5) P(3)}{P(6)}
	,\\
	A'_{13,6}(q^{13})
	&=
		\tfrac{4 P(2) P(4)^2 P(6)^2}{q^{13} P(1)^2}
		+\tfrac{9 P(5) P(4) P(3)^2 P(6)}{q^{13} P(1)^2}
		+\tfrac{4 P(2) P(5) P(3) P(6)^2}{P(1) P(4)}
		+\tfrac{5 P(5)^2 P(3)^2}{P(1)}
		+\tfrac{4 P(5)^2 P(6)^2}{P(4)}
		\\&\quad		
		+\tfrac{7 P(4)^3 P(6)}{P(2)}
		+\tfrac{6 P(2)^2 P(4) P(3) P(6)}{P(1)^2}
		+\tfrac{11 P(2)^2 P(5)^3 P(3)}{P(1)^2 P(6)}
		+\tfrac{6 P(2) P(5) P(4) P(6)}{P(1)}
		+\tfrac{9 P(4) P(3)^2 P(6)^2}{P(1) P(5)}
		\\&\quad		
		+\tfrac{6 P(1) P(5) P(4) P(6)^2}{P(2)^2}
		+\tfrac{4 P(5)^2 P(4) P(6)}{P(3)}
		+\tfrac{5 P(4)^2 P(3)^3}{P(1) P(2)}
		+\tfrac{3 P(4) P(3) P(6)^2}{P(2)}
		+\tfrac{9 q^{13} P(1) P(6)^3}{P(4)}
		\\&\quad		
		+\tfrac{2 q^{13} P(2)^3 P(4) P(6)}{P(1) P(3)}
		+\tfrac{12 q^{13} P(2)^2 P(5) P(6)^2}{P(4) P(3)}
		+\tfrac{q^{13} P(2) P(4)^2 P(3)^2}{P(1) P(5)}
		+\tfrac{7 q^{13} P(2) P(5)^2 P(3) P(6)}{P(4)^2}
		\\&\quad		
		+\tfrac{9 q^{13} P(2) P(5)^2 P(3)^3}{P(1) P(4) P(6)}
		+\tfrac{12 q^{13} P(2)^2 P(5) P(3)}{P(1)}
		+\tfrac{3 q^{13} P(2) P(4) P(6)^2}{P(5)}
		+\tfrac{11 q^{13} P(1) P(5) P(4)^2}{P(2)}
		+{\scriptstyle 6 q^{13} P(2) P(5)^2}
		\\&\quad
		+{\scriptstyle q^{13} P(3)^2 P(6)}
		+{\scriptstyle 10 q^{13} P(4)^2 P(3)}
		+\tfrac{4 q^{26} P(2)^2 P(4)^2}{P(5)}
		+{\scriptstyle 9 q^{26} P(1) P(2) P(6)}
		+\tfrac{6 q^{26} P(1) P(5) P(4) P(3)}{P(6)}
		\\&\quad		
		+\tfrac{8 q^{39} P(1) P(2) P(4) P(3)^2}{P(5)^2}
		+\tfrac{12 q^{39} P(1) P(2)^2 P(4)}{P(6)}
		+\tfrac{12 q^{39} P(1) P(2)^2 P(3)}{P(4)}
		+\tfrac{9 q^{39} P(1)^2 P(2) P(5)}{P(4)}
	,\\
	A'_{13,7}(q^{13})
	&=
		+\tfrac{12 P(4)^2 P(3)^2 P(6)^2}{q^{13} P(1)^2 P(5)}
		+\tfrac{P(5)^2 P(6)^2}{q^{13} P(1)}
		+\tfrac{5 P(4)^3 P(3)}{P(1)}
		+\tfrac{11 P(5) P(4)^3}{P(2)}
		+\tfrac{8 P(2) P(5)^2 P(4)}{P(1)}
		+\tfrac{11 P(4) P(3)^2 P(6)}{P(1)}
		\\&\quad		
		+\tfrac{3 P(5)^3 P(3)^2}{(P(1) P(6)}
		+\tfrac{11 P(2)^3 P(4)^2 P(6)}{P(1)^2 P(3)}
		+\tfrac{3 P(2)^2 P(5) P(4) P(3)}{P(1)^2}
		+\tfrac{10 P(2)^2 P(5) P(6)^2}{P(1) P(3)}
		+\tfrac{11 P(2) P(4)^2 P(6)^2}{P(1) P(5)}
		\\&\quad		
		+\tfrac{4 P(5) P(4) P(3) P(6)}{P(2)}
		+\tfrac{10 P(2)^2 P(5) P(3)^2 P(6)}{P(1)^2 P(4)}
		+\tfrac{5 q^{13} P(2) P(4)^3}{P(3)}
		+\tfrac{7 q^{13} P(2) P(3)^3}{P(1)}
		+\tfrac{3 q^{13} P(2)^2 P(4) P(3) P(6)}{P(1) P(5)}
		\\&\quad		
		+\tfrac{4 q^{13} P(1) P(4) P(3) P(6)^2}{P(2) P(5)}
		+\tfrac{9 q^{13} P(2) P(3) P(6)^2}{P(4)}
		+\tfrac{7 q^{13} P(5) P(4)^2 P(3)}{P(6)}
		+\tfrac{12 q^{13} P(1) P(5) P(6)^2}{P(4)}
		\\&\quad		
		+\tfrac{6 q^{13} P(2)^2 P(5)^2 P(3)}{(P(1) P(6)}
		+\tfrac{10 q^{13} P(1) P(5) P(4) P(6)}{P(3)}
		+\tfrac{6 q^{13} P(1) P(5)^2 P(4)^2}{(P(2) P(6)}
		+{\scriptstyle 5 q^{13} P(2) P(4) P(6)}
		+{\scriptstyle 7 q^{13} P(5) P(3)^2}
		\\&\quad		
		+\tfrac{7 q^{26} P(2) P(4)^2 P(3)^2}{P(5)^2}
		+\tfrac{10 q^{26} P(1) P(4)^2 P(3)}{P(5)}
		+\tfrac{3 q^{26} P(1) P(3)^2 P(6)}{P(5)}
		+\tfrac{4 q^{26} P(2)^2 P(4)^2}{P(6)}
		\\&\quad		
		+\tfrac{10 q^{26} P(1) P(5)^2 P(4) P(3)}{P(6)^2}
		+{\scriptstyle 5 q^{26} P(2)^2 P(3)}
		+{\scriptstyle q^{26} P(1) P(2) P(5)}
		+\tfrac{12 q^{39} P(1)^2 P(4) P(3)}{P(6)}
	,\\
	A'_{13,8}(q^{13})
	&=
		\tfrac{3 P(2) P(5)^2 P(3) P(6)}{q^{13} P(1)^2}
		+\tfrac{10 P(4) P(6)^3}{q^{13} P(1)}
		+\tfrac{6 P(2)^3 P(5) P(6)}{P(1)^2}
		+\tfrac{2 P(2) P(4)^2 P(6)}{P(1)}
		+\tfrac{8 P(2) P(3) P(6)^2}{P(1)}
		\\&\quad
		+\tfrac{9 P(5) P(4)^2 P(6)}{P(3)}
		+\tfrac{6 P(5) P(4) P(3)^2}{P(1)}
		+\tfrac{5 P(5)^2 P(4) P(3)}{P(2)}
		+\tfrac{9 P(2)^2 P(4)^2 P(3) P(6)}{P(1)^2 P(5)}
		+\tfrac{6 P(4) P(3)^4 P(6)}{P(1) P(2) P(5)}
		\\&\quad		
		+\tfrac{9 P(2)^2 P(5)^2 P(3)^2}{P(1)^2 P(4)}
		+\tfrac{3 P(2) P(5)^3 P(4)}{P(1) P(6)}
		+\tfrac{P(2) P(5)^3 P(3)}{P(1) P(4)}
		+\tfrac{7 P(5) P(3)^3 P(6)}{P(1) P(4)}
		+\tfrac{5 P(4)^2 P(3) P(6)^2}{P(2) P(5)}
		\\&\quad		
		+\tfrac{3 P(1) P(5)^2 P(6)^2}{P(2) P(3)}
		+\tfrac{9 q^{13} P(2) P(5) P(4)^3}{(P(3) P(6)}
		+\tfrac{3 q^{13} P(2) P(5) P(3) P(6)}{P(4)}
		+\tfrac{8 q^{13} P(1) P(4) P(3) P(6)}{P(2)}
		\\&\quad		
		+\tfrac{7 q^{13} P(2)^2 P(4) P(3)}{P(1)}
		+\tfrac{10 q^{13} P(1) P(5)^2 P(4)}{P(3)}
		+\tfrac{11 q^{13} P(4) P(3)^2 P(6)}{P(5)}
		+\tfrac{6 q^{13} P(2)^2 P(6)^2}{P(3)}
		+\tfrac{6 q^{13} P(4)^3 P(3)}{P(5)}
		\\&\quad		
		+\tfrac{9 q^{13} P(5)^2 P(3)^2}{P(6)}
		+\tfrac{6 q^{13} P(2)^3 P(4)^2 P(6)}{(P(1) P(5) P(3))}
		+{\scriptstyle 8 q^{13} P(2) P(5) P(4)}
		+{\scriptstyle q^{26} P(1) P(3)^2}
		+\tfrac{2 q^{26} P(2)^2 P(5) P(3)}{P(6)}
		\\&\quad		
		+\tfrac{3 q^{26} P(1) P(2) P(5)^2}{P(6)}
		+\tfrac{2 q^{39} P(1) P(2)^2 P(3)}{P(5)}
		+\tfrac{4 q^{39} P(1) P(2) P(3)^3}{(P(4) P(6)}
		+{\scriptstyle 7 q^{39} P(1)^2 P(2)}
	,\\
	A'_{13,9}(q^{13})
	&=
		\tfrac{10 P(2) P(4) P(3) P(6)^2}{q^{13} P(1)^2}
		+\tfrac{10 P(2) P(5)^3 P(3)}{q^{13} P(1)^2}
		+\tfrac{6 P(5) P(3)^3 P(6)}{q^{13} P(1)^2}
		+\tfrac{8 P(2)^3 P(5)^2}{P(1)^2}
		+\tfrac{7 P(5)^2 P(4)^2}{P(3)}
		+\tfrac{10 P(3)^2 P(6)^2}{P(2)}
		\\&\quad		
		+\tfrac{2 P(2)^2 P(4) P(6)^2}{(P(1) P(3)}
		+\tfrac{4 P(2) P(5) P(3) P(6)}{P(1)}
		+\tfrac{10 P(2) P(5) P(6)^3}{P(4) P(3)}
		+\tfrac{8 P(4)^2 P(3)^2 P(6)}{P(1) P(5)}
		+\tfrac{7 P(5)^2 P(4) P(3)^2}{P(1) P(6)}
		\\&\quad		
		+\tfrac{5 P(1) P(5) P(4)^2 P(6)}{P(2)^2}
		+\tfrac{12 P(2)^2 P(3)^2 P(6)}{P(1)^2}
		+\tfrac{10 P(2) P(5) P(4)^2}{P(1)}
		+\tfrac{8 P(5)^2 P(3) P(6)^2}{P(4)^2}
		+\tfrac{8 P(4)^2 P(3) P(6)}{P(2)}
		\\&\quad		
		+{\scriptstyle 2 P(5)^2 P(6)}
		+\tfrac{11 q^{13} P(2)^2 P(5) P(3)^2}{P(1) P(4)}
		+\tfrac{9 q^{13} P(1) P(5) P(4)^3}{P(2) P(6)}
		+\tfrac{3 q^{13} P(2)^3 P(6)}{P(1)}
		+\tfrac{3 q^{13} P(2)^3 P(4)^2}{P(1) P(3)}
		\\&\quad		
		+\tfrac{10 q^{13} P(2)^2 P(5) P(6)}{P(3)}
		+\tfrac{11 q^{13} P(2) P(4)^2 P(6)}{P(5)}
		+\tfrac{q^{13} P(2) P(5)^2 P(4)}{P(6)}
		+\tfrac{3 q^{13} P(2) P(3) P(6)^2}{P(5)}
		\\&\quad		
		+\tfrac{7 q^{13} P(2) P(5)^2 P(3)}{P(4)}
		+\tfrac{2 q^{13} P(2)^2 P(4)^2 P(3) P(6)}{P(1) P(5)^2}
		+\tfrac{9 q^{13} P(2)^2 P(5) P(4) P(3)}{P(1) P(6)}
		+{\scriptstyle q^{13} P(1) P(6)^2}
		+{\scriptstyle 8 q^{13} P(4) P(3)^2}		
		\\&\quad		
		+\tfrac{8 q^{26} P(2)^2 P(4) P(3)}{P(5)}
		+\tfrac{10 q^{26} P(1) P(5) P(3)^2}{P(6)}
		+\tfrac{8 q^{26} P(1) P(2) P(3) P(6)}{P(4)}
		+\tfrac{10 q^{26} P(2) P(3)^3}{P(6)}
		+{\scriptstyle 8 q^{26} P(1) P(2) P(4)}
		\\&\quad		
		+\tfrac{7 q^{39} P(1) P(2)^2 P(3)}{P(6)}
	,\\
	A'_{13,10}(q^{13})
	&=
		\tfrac{5 P(5)^2 P(3) P(6)^2}{q^{13} P(1) P(4)}
		+\tfrac{3 P(5)^2 P(4) P(6)}{q^{13} P(1)}
		+\tfrac{5 P(2) P(5) P(4) P(3) P(6)}{q^{13} P(1)^2}
		+\tfrac{P(2)^2 P(5) P(4) P(6)}{P(1) P(3)}
		\\&\quad		
		+\tfrac{10 P(2) P(4) P(3) P(6)^2}{P(1) P(5)}
		+\tfrac{8 P(2) P(5)^2 P(3)^2 P(6)}{P(1) P(4)^2}
		+\tfrac{9 P(1) P(5) P(4) P(6)^2}{P(2) P(3)}
		+\tfrac{8 P(2) P(5)^2 P(3)}{P(1)}
		\\&\quad		
		+\tfrac{5 P(5) P(4)^2 P(3)}{P(2)}
		+\tfrac{11 P(5) P(3)^2 P(6)}{P(2)}
		+\tfrac{11 P(4)^2 P(3)^2}{P(1)}
		+\tfrac{12 P(3)^3 P(6)}{P(1)}
		+\tfrac{10 P(2)^2 P(5) P(6)^2}{P(1) P(4)}
		\\&\quad		
		+\tfrac{2 P(2) P(5)^2 P(4)^2}{((1) P(6)}
		+\tfrac{12 P(2)^3 P(4) P(6)}{P(1)^2}
		+{\scriptstyle 4 P(5)^3}
		+{\scriptstyle 4 P(4) P(6)^2}
		+\tfrac{2 q^{13} P(2)^2 P(4)^2 P(3)}{P(1) P(5)}
		\\&\quad		
		+\tfrac{2 q^{13} P(1) P(5)^2 P(4) P(3)}{P(2) P(6)}
		+\tfrac{2 q^{13} P(1) P(5) P(4)^2}{P(3)}
		+\tfrac{8 q^{13} P(4)^2 P(3)^2 P(6)}{P(5)^2}
		+\tfrac{5 q^{13} P(5) P(4) P(3)^2}{P(6)}
		\\&\quad		
		+\tfrac{6 q^{13} P(2)^2 P(5)^2 P(3)^2}{P(1) P(4) P(6)}
		+{\scriptstyle q^{13} P(2) P(3) P(6)}
		+{\scriptstyle 7 q^{13} P(1) P(5) P(6)}
		+{\scriptstyle 11 q^{13} P(2) P(4)^2}
		+\tfrac{10 q^{26} P(1) P(2) P(5) P(4)}{P(6)}
		\\&\quad		
		+\tfrac{12 q^{26} P(1) P(2) P(5) P(3)}{P(4)}
		+\tfrac{11 q^{26} P(2)^2 P(4) P(3)}{P(6)}
		+\tfrac{5 q^{26} P(1)^2 P(4) P(3)}{P(2)}
		+\tfrac{4 q^{26} P(2)^2 P(3)^2}{P(4)}
		\\&\quad
		+\tfrac{q^{39} P(1)^2 P(2) P(3) P(6)}{P(5) P(4)}
	,\\
	A'_{13,11}(q^{13})
	&=
		\tfrac{5 P(2)^2 P(5) P(4)^2 P(6)}{q^{13} P(1)^2 P(3)}
		+\tfrac{12 P(2) P(4)^2 P(3) P(6)^2}{q^{13} P(1)^2 P(5)}
		+\tfrac{5 P(2) P(5)^2 P(3)^2 P(6)}{q^{13} P(1)^2 P(4)}
		+\tfrac{9 P(4)^2 P(6)^2}{q^{13} P(1)}
		+\tfrac{10 P(5) P(3) P(6)^2}{P(4)}
		\\&\quad		
		+\tfrac{7 P(2)^3 P(5) P(4)}{P(1)^2}
		+\tfrac{4 P(2) P(4)^3}{P(1)}
		+\tfrac{5 P(2) P(6)^3}{P(3)}
		+\tfrac{4 P(5) P(4)^3}{P(3)}
		+\tfrac{2 P(5) P(3)^3}{P(1)}
		+\tfrac{9 P(2)^2 P(5)^2 P(6)}{P(1) P(4)}
		\\&\quad
		+\tfrac{P(2) P(4) P(3) P(6)}{P(1)}		
		+\tfrac{2 P(2) P(5)^3 P(3)^2}{P(1) P(4)^2}
		+\tfrac{P(5)^2 P(4)^2 P(3)}{P(2) P(6)}
		+\tfrac{2 P(4) P(3)^2 P(6)^2}{P(2) P(5)}
		+\tfrac{9 P(1) P(5)^2 P(4) P(6)}{P(2) P(3)}
		\\&\quad		
		+{\scriptstyle 5 P(5) P(4) P(6)}		
		+\tfrac{12 q^{13} P(2)^2 P(4) P(6)}{P(3)}
		+\tfrac{4 q^{13} P(2) P(5) P(4)^2}{P(6)}
		+\tfrac{5 q^{13} P(1) P(4)^2 P(3)}{P(2)}
		+\tfrac{6 q^{13} P(2)^2 P(3)^2}{P(1)}
		\\&\quad		
		+\tfrac{6 q^{13} P(4)^2 P(3)^2}{P(5)}
		+\tfrac{7 q^{13} P(2)^3 P(4) P(6)}{P(1) P(5)}
		+{\scriptstyle 7 q^{13} P(2) P(5) P(3)}
		+{\scriptstyle 6 q^{13} P(1) P(5)^2}
		+\tfrac{9 q^{26} P(1) P(2) P(3) P(6)}{P(5)}
		\\&\quad	
		+\tfrac{6 q^{26} P(1) P(4) P(3)^2}{P(6)}
		+{\scriptstyle 5 q^{26} P(2)^3}
		+\tfrac{10 q^{39} P(1)^2 P(2) P(3)}{P(4)}
		+\tfrac{9 q^{39} P(1) P(2)^2 P(4) P(3)}{P(5) P(6)}
	,\\
	A'_{13,12}(q^{13})
	&=
		+\tfrac{2 P(4)^2 P(3)^2 P(6)^2}{q^{13} P(1) P(2) P(5)}
		+\tfrac{6 P(5) P(4)^2 P(6)}{q^{13} P(1)}
		+\tfrac{5 P(5) P(3) P(6)^2}{q^{13} P(1)}
		+\tfrac{2 P(2) P(4)^2 P(3) P(6)}{q^{13} P(1)^2}
		+\tfrac{11 P(2) P(5)^3 P(3)^2}{(q^{13} P(1)^2 P(4)}
		\\&\quad		
		+\tfrac{11 P(2) P(5) P(6)^2}{P(3)}
		+\tfrac{P(5)^2 P(4)^3}{(P(3) P(6)}
		+\tfrac{10 P(5)^2 P(3) P(6)}{P(4)}
		+\tfrac{4 P(2)^3 P(4)^2 P(6)}{P(1)^2 P(5)}
		+\tfrac{6 P(2)^2 P(4)^2 P(6)}{P(1) P(3)}
		\\&\quad		
		+\tfrac{10 P(2) P(5) P(4) P(3)}{P(1)}
		+\tfrac{9 P(4) P(3)^3 P(6)}{P(1) P(5)}
		+\tfrac{2 P(4) P(3)^2 P(6)}{P(2)}
		+\tfrac{7 P(1) P(5) P(4)^3}{P(2)^2}	
		+\tfrac{2 P(2) P(5) P(3)^2 P(6)}{P(1) P(4)}
		\\&\quad
		+\tfrac{6 P(2)^2 P(6)^2}{P(1)}
		+\tfrac{10 P(1) P(6)^3}{P(2)}
		+{\scriptstyle 2 P(5)^2 P(4)}
		+\tfrac{11 q^{13} P(2)^3 P(4)}{P(1)}
		+\tfrac{2 q^{13} P(4)^2 P(3)^2}{P(6)}
		+\tfrac{10 q^{13} P(2)^2 P(5) P(4)}{P(3)}
		\\&\quad
		+\tfrac{10 q^{13} P(2)^2 P(5) P(6)}{P(4)}
		+\tfrac{4 q^{13} P(2) P(5)^2 P(3)}{P(6)}
		+\tfrac{3 q^{13} P(1) P(5) P(4)^2 P(3)}{P(2) P(6)}
		+\tfrac{q^{13} P(1) P(2) P(5)^2 P(6)}{P(4) P(3)}
		+{\scriptstyle 4 q^{13} P(3)^3}
		\\&\quad		
		+\tfrac{6 q^{26} P(1) P(2) P(4)^2}{P(6)}
		+\tfrac{3 q^{26} P(2)^2 P(4)^2 P(3)}{P(5) P(6)}
		+{\scriptstyle q^{26} P(1) P(2) P(3)}
		+{\scriptstyle 9 q^{26} P(1)^2 P(5)}
		+\tfrac{7 q^{39} P(1) P(2)^2 P(3)^2}{P(4) P(6)}
\end{align*}

To finish the proof of (\ref{EqTheoremUMod13}) we must show that
$A'_{13}\equiv A_{13}\pmod{13}$. We do this by reducing each $A'_{13,i}$ modulo $13$.
For each of the $A'_{13,i}$ we clear denominators
and use a computer algebra system (in particular we used Maple)
to reduce 
according the following rules
(with priority of eliminating $P(6)$, $P(5)$, $P(4)$, $P(3)$, $P(2)$, and then
$P(1)$):
\begin{align*}
	P(3)^3P(1)-P(4)P(2)^3+q^{13}P(5)P(1)^3 &=0
	,\\ 
	P(4)^3P(2)-P(5)P(3)^3+q^{26}P(6)P(1)^3 &=0
	,\\ 
	P(5)^3P(1)-P(6)P(3)^3+q^{13}P(5)P(2)^3 &=0
	,\\
	P(5)^3P(3)-P(6)P(4)^3+q^{39}P(4)P(1)^3 &=0
	,\\ 
	P(6)^3P(1)-P(4)^3P(3)+q^{13}P(3)^3P(2) &=0
	,\\ 
	P(6)^3P(2)-P(5)P(4)^3+q^{26}P(3)P(2)^3 &=0
	,\\ 
	P(6)^3P(3)-P(5)^3P(4)+q^{39}P(2)^3P(1) &=0
	,\\ 
	P(6)^3P(4)-P(6)P(5)^3+q^{52}P(2)P(1)^3 &=0
	,\\ 
	P(4)^2P(3)P(1)-P(5)P(3)P(2)^2+q^{13}P(6)P(2)P(1)^2 &=0
	,\\ 
	P(4)^2P(5)P(1)-P(6)P(2)P(3)^2+q^{13}P(6)P(1)P(2)^2 &=0
	,\\ 
	P(5)^2P(3)P(1)-P(6)P(2)^2P(4)+q^{13}P(6)P(1)^2P(3) &=0
	,\\ 
	P(5)^2P(4)P(2)-P(6)P(4)P(3)^2+q^{26}P(5)P(2)P(1)^2 &=0
	,\\ 
	P(5)^2P(6)P(1)-P(5)P(2)P(4)^2+q^{13}P(4)P(1)P(3)^2 &=0
	,\\ 
	P(5)^2P(6)P(2)-P(6)P(3)P(4)^2+q^{26}P(4)P(1)P(2)^2 &=0
	,\\ 
	P(6)^2P(3)P(1)-P(5)P(2)^2P(6)+q^{13}P(4)P(1)^2P(5) &=0
	,\\ 
	P(6)^2P(4)P(1)-P(5)^2P(2)P(3)+q^{13}P(4)^2P(1)P(2) &=0
	,\\ 
	P(6)^2P(4)P(2)-P(6)P(3)^2P(5)+q^{26}P(4)P(1)^2P(3) &=0
	,\\ 
	P(6)^2P(5)P(1)-P(5)P(4)P(3)^2+q^{13}P(4)P(3)P(2)^2 &=0
	,\\ 
	P(6)^2P(5)P(2)-P(5)^2P(4)P(3)+q^{26}P(3)^2P(2)P(1) &=0
	,\\ 
	P(6)^2P(5)P(3)-P(6)P(5)P(4)^2+q^{39}P(3)P(2)P(1)^2 &=0
	,\\ 
	P(6)P(4)P(5)P(1)-P(6)P(3)P(4)P(2)+q^{13}P(5)P(2)P(3)P(1) &=0
.
\end{align*}
These equations all follow from (\ref{MolkTanneryProductId}).
Doing so gives that the $A'_{13,i}(q^{13})$ reduce to the $A_{13,i}(q^{13})$
stated in Theorem \ref{TheoremMain}. 
Given the number of identities coming from (\ref{MolkTanneryProductId}) and
that we are working modulo $13$, it may be possible to find
$A_{13,i}$ that consist of fewer terms.
This finishes the proof of (\ref{EqTheoremUMod13}).

\end{proof}

\begin{proof}[Proof of (\ref{EqTheoremVMod13}).]
We recall here $\ell=13$ and $P(a)=[q^{13a};q^{169}]_\infty$.
We expand the $\ell=13$ case of (\ref{EqVToS}) with Lemmas
\ref{MainLemmaForSSeries} and \ref{SecondLemmaForSSeries},
using $m=1$ in each application, to obtain 
\begin{align}
	\label{EqMod13VEq1}
	E(1)^3V(q)
	&\equiv	
	\tfrac{12 q^{13} E(1)^3 T(91, 13, 169)}{E(169) P(1)}
	+\tfrac{6 q^{13} E(169)^2 P(2)}{P(1) P(4)}
	+\tfrac{5 q^{13} E(169)^2 P(5)}{P(3) P(4)}
	+\tfrac{2 q^{13} E(169)^2 P(2) P(4)}{P(1) P(3) P(6)}
	+\tfrac{8 q E(169)^2 P(3) P(6)}{P(1) P(2) P(5)}
	\nonumber\\&\quad
	+\tfrac{12 q E(169)^2 P(5)}{P(1) P(3)}
	+\tfrac{10 q^{14} E(169)^2 P(4)}{P(2) P(6)}
	+\tfrac{4 q^{14} E(169)^2 P(2) P(5)}{P(1) P(4) P(6)}
	+\tfrac{12 q^2 E(169)^2 P(3)}{P(1) P(2)}
	+\tfrac{q^2 E(169)^2 P(2) P(5)}{P(1)^2 P(6)}
	\nonumber\\&\quad
	+\tfrac{10 q^{15} E(169)^2 P(2)}{P(1) P(5)}
	+\tfrac{4 q^{15} E(169)^2}{P(3)}
	+\tfrac{11 q^3 E(1)^3 T(39, 13, 169)}{E(169) P(1)}
	+\tfrac{11 q^3 E(169)^2 P(4)}{P(1) P(3)}
	+\tfrac{4 q^3 E(169)^2 P(6)}{P(1) P(4)}
	\nonumber\\&\quad
	+\tfrac{12 q^3 E(169)^2 P(3) P(5)}{P(1) P(2) P(6)}
	+\tfrac{2 q^3 E(169)^2 P(2) P(4)}{P(1)^2 P(5)}
	+\tfrac{4 q^{16} E(169)^2 P(2)}{P(1) P(6)}
	+\tfrac{4 q^4 E(169)^2 P(4) P(5)}{P(1) P(3) P(6)}
	\nonumber\\&\quad
	+\tfrac{3 q^4 E(169)^2 P(2) P(3)}{P(1)^2 P(4)}
	+\tfrac{3 q^4 E(169)^2 P(2) P(6)}{P(1) P(3)^2}
	+\tfrac{3 q^4 E(169)^2 P(4)}{P(2)^2}
	+\tfrac{8 q^4 E(169)^2 P(5)}{P(1) P(4)}
	+\tfrac{6 q^{17} E(169)^2 P(6)}{P(4) P(5)}
	\nonumber\\&\quad
	+\tfrac{10 E(169)^2 P(5)}{q^8 P(1)^2}
	+\tfrac{3 E(1)^3 T(13, 13, 169)}{q^8 E(169) P(1)}
	+\tfrac{4 q^5 E(169)^2 P(2)^2}{P(1)^2 P(3)}
	+\tfrac{8 q^5 E(169)^2 P(3) P(4)}{P(1) P(2) P(6)}
	+\tfrac{q^{18} E(169)^2 P(2) P(4)}{P(1) P(5) P(6)}
	\nonumber\\&\quad
	+\tfrac{q^{18} E(169)^2}{P(4)}
	+\tfrac{9 E(169)^2 P(4) P(6)}{q^7 P(1)^2 P(5)}
	+\tfrac{7 q^6 E(169)^2 P(5)}{P(2) P(3)}
	+\tfrac{8 q^6 E(169)^2}{P(1)}
	+\tfrac{3 q^6 E(169)^2 P(4)^2}{P(1) P(3) P(6)}
	\nonumber\\&\quad
	+\tfrac{3 q^{19} E(169)^2 P(2) P(3)}{P(1) P(4) P(6)}
	+\tfrac{12 q^7 E(1)^3 T(52, 13, 169)}{E(169) P(1)}
	+\tfrac{4 q^7 E(169)^2 P(3)^2}{P(1) P(2) P(5)}
	+\tfrac{7 q^7 E(169)^2 P(3) P(5)}{P(1) P(4)^2}
	\nonumber\\&\quad
	+\tfrac{7 q^7 E(169)^2 P(2) P(6)}{P(1) P(3) P(4)}
	+\tfrac{5 q^{20} E(169)^2}{P(5)}
	+\tfrac{11 E(169)^2 P(3) P(5)}{q^5 P(1)^2 P(4)}
	+\tfrac{12 q^8 E(169)^2 P(4)}{P(1) P(5)}
	+\tfrac{11 q^8 E(169)^2 P(6)}{P(2) P(4)}
	\nonumber\\&\quad
	+\tfrac{4 q^8 E(169)^2 P(4)}{P(2) P(3)}
	+\tfrac{11 q^{21} E(169)^2}{P(6)}
	+\tfrac{12 q^9 E(169)^2 P(4)}{P(1) P(6)}
	+\tfrac{7 q^9 E(169)^2 P(2) P(6)}{P(1) P(3) P(5)}
	+\tfrac{8 q^{22} E(169)^2 P(5)}{P(6)^2}
	\nonumber\\&\quad
	+\tfrac{11 q^{10} E(169)^2 P(2)}{P(1) P(3)}
	+\tfrac{2 q^{10} E(169)^2 P(6)}{P(2) P(5)}
	+\tfrac{8 q^{10} E(169)^2 P(3) P(5)}{P(1) P(4) P(6)}
	+\tfrac{2 q^{23} E(169)^2 P(4)}{P(5) P(6)}
	+\tfrac{9 E(1)^3 T(26, 13, 169)}{q^2 E(169) P(1)}
	\nonumber\\&\quad
	+\tfrac{7 E(169)^2 P(5)^2}{q^2 P(1) P(2) P(6)}
	+\tfrac{8 E(169)^2 P(2) P(4)}{q^2 P(1)^2 P(3)}
	+\tfrac{10 E(169)^2 P(4) P(6)}{q^2 P(1) P(2) P(5)}
	+\tfrac{10 q^{11} E(169)^2 P(3)}{P(1) P(5)}
	+\tfrac{9 q^{11} E(169)^2}{P(2)}
	\nonumber\\&\quad
	+\tfrac{E(169)^2 P(3) P(6)}{q P(1) P(2) P(4)}
	+\tfrac{5 q^{12} E(1)^3 T(78, 13, 169)}{E(169) P(1)}
	+\tfrac{6 q^{12} E(169)^2 P(5)}{P(2) P(6)}
	+\tfrac{8 q^{12} E(169)^2 P(2) P(4)}{P(1) P(3) P(5)}
.
\end{align}
We multiply (\ref{EqMod13VEq1}) by (\ref{EqMod13UEq2}),
collect terms, and reduce modulo $13$, to find that
\begin{align*}
	V(q)
	&\equiv
		\tfrac{12 q^{13} T(91, 13, 169)}{E(169) P(1)}
		+
		\tfrac{11 q^3 T(39, 13, 169)}{E(169) P(1)}
		+
		\tfrac{3  T(13, 13, 169)}{q^8 E(169) P(1)}
		+
		\tfrac{12 q^7 T(52, 13, 169)}{E(169) P(1)}
		+
		\tfrac{9  T(26, 13, 169)}{q^2 E(169) P(1)}
		+
		\tfrac{5 q^{12}  T(78, 13, 169)}{E(169) P(1)}
		\\&\quad		
		+
		B'_{13}(q)
	\pmod{13}
,
\end{align*}
where
\begin{align*}
	B'_{13}(q) &= 
		\frac{E(169)^4}{E(13)}
		\bigg(
		B'_{13,0}(q^{13})+qB'_{13,1}(q^{13})+q^2B'_{13,2}(q^{13})+q^3B'_{13,3}(q^{13})
		+q^4B'_{13,4}(q^{13})
		\\&\quad		
		+q^5B'_{13,5}(q^{13})
		+q^6B'_{13,6}(q^{13})
		+q^7B'_{13,7}(q^{13})+q^8B'_{13,8}(q^{13})+q^9B'_{13,9}(q^{13})
		+q^{10}B'_{13,10}(q^{13})
		\\&\quad		
		+q^{11}B'_{13,11}(q^{13})
		+q^{12}B'_{13,12}(q^{13})
		\bigg)
	,\\
	B'_{13,0}(q^{13})
	&=
		\tfrac{11 P(5)^2 P(6) P(3) }{P(1)}
		+\tfrac{3 P(6)^2 P(3)^3 }{P(1) P(2)}
		+\tfrac{P(5)^2 P(4)^2 }{P(1)}
		+\tfrac{12 P(6)^3 P(4) }{P(2)}
		+\tfrac{11 P(5)^3 P(6) }{P(2)}
		+\tfrac{7 P(5) P(6)^2 P(2) P(4) }{P(1) P(3)}
		\\&\quad	
		+\tfrac{7 P(5) P(6) P(2) P(3)^2 }{P(1)^2}		
		+\tfrac{4q^{13} P(6) P(2)^3 P(3)  }{P(1)^2}
		+\tfrac{9q^{13} P(5) P(6) P(2)^2  }{P(1)}
		+\tfrac{2q^{13} P(5)^2 P(6) P(2)  }{P(3)}
		\\&\quad
		+\tfrac{7q^{13} P(5) P(3)^2 P(4)  }{P(2)}
		+\tfrac{q^{13} P(5) P(6)^2 P(1)  }{P(2)}
		+\tfrac{4q^{13} P(6)^2 P(2)^3 P(4)  }{P(1) P(3)^2}
		+\tfrac{6q^{13} P(5) P(2)^2 P(4)^2  }{P(1) P(3)}
		\\&\quad
		+\tfrac{7q^{13} P(5)^2 P(2) P(3)^2  }{P(1) P(4)}
		+\tfrac{6q^{13} P(5) P(3) P(4)^3  }{P(6) P(2)}
		+\tfrac{11q^{13} P(5)^2 P(2) P(3) P(4)  }{P(6) P(1)}
		+\tfrac{4q^{13} P(5) P(6) P(1) P(4)^2  }{P(2) P(3)}
		\\&\quad
		+\tfrac{8q^{26} P(6)^2 P(2)^2  }{P(5)}
		+\tfrac{12q^{26} P(5)^2 P(2)^2  }{P(4)}
		+\tfrac{5q^{26} P(2) P(4)^3  }{P(6)}
		+\tfrac{5q^{26} P(5)^3 P(2) P(3)  }{P(6)^2}
		+{\scriptstyle 6q^{26} P(2) P(3) P(4) }
		\\&\quad
		+{\scriptstyle 5q^{26} P(5) P(1) P(4) }
		+\tfrac{q^{39} P(2)^2 P(3)^2  }{P(6)}
		+\tfrac{4q^{39} P(1) P(3)^3  }{P(5)}
		+\tfrac{7q^{39} P(6) P(1) P(2)^2  }{P(4)}
		+\tfrac{7q^{39} P(5) P(1) P(2) P(3)^2 }{P(4)^2}
		\\&\quad
		+\tfrac{5 P(1)^2 P(2) P(3) q^52 }{P(5)}
	,\\
	B'_{13,1}(q^{13})
	&=
		\tfrac{12 P(5)^2 P(6) P(2) P(4) }{P(1) P(3)}
		+\tfrac{10 P(6) P(4)^3 }{P(1)}
		+\tfrac{11 P(5)^3 P(3) }{P(1)}
		+\tfrac{3 P(5) P(6) P(2)^2 P(4) }{P(1)^2}
		+\tfrac{9 P(5) P(6)^3 P(3) }{P(2) P(4)}
		\\&\quad		
		+\tfrac{7 P(6)^2 P(3) P(4) }{P(1)}
		+\tfrac{q^{13} P(6) P(2)^4 P(4) }{P(1)^2 P(3)}
		+\tfrac{8 q^{13} P(6)^2 P(2)^2 P(4) }{P(5) P(1)}
		+\tfrac{12 q^{13} P(2) P(3) P(4)^2 }{P(1)}
		+\tfrac{4 q^{13} P(6)^2 P(2) P(4) }{P(3)}
		\\&\quad
		+\tfrac{9 q^{13} P(5)^2 P(2)^2 }{P(1)}
		+\tfrac{7 q^{13} P(6) P(2) P(3)^2 }{P(1)}
		+\tfrac{12 q^{13} P(5) P(4)^4 }{P(6) P(3)}
		+\tfrac{2 q^{13} P(5)^2 P(6) P(1) }{P(2)}
		+\tfrac{5 q^{13} P(5)^3 P(2) P(3)^2 }{P(6) P(1) P(4)}
		\\&\quad
		+\tfrac{2 q^{13} P(5)^2 P(1) P(4)^2 }{P(2) P(3)}
		+\tfrac{3 q^{13} P(5)^2 P(3)^2 P(4) }{P(6) P(2)}
		+\tfrac{3 q^{13} P(1) P(6) P(3) P(4)^2 }{P(2)^2}
		+{\scriptstyle 12 q^{13} P(5) P(4)^2}
		\\&\quad
		+{\scriptstyle 9 q^{13} P(5) P(6) P(3)}
		+\tfrac{9 q^{26} P(5) P(2) P(3) P(4) }{P(6)}
		+\tfrac{12 q^{26} P(1) P(5) P(6) P(2) }{P(3)}
		+{\scriptstyle 12 q^{26} P(6) P(2)^2}
		\\&\quad
		+\tfrac{8 q^{26} P(5)^2 P(1) P(4) }{P(6)}
		+\tfrac{6 q^{26} P(6)^2 P(1) P(3) }{P(5)}
		+\tfrac{10 q^{26} P(3)^3 P(4) }{P(5)}
		+\tfrac{10 q^{26} P(2)^3 P(4)^2 }{P(5) P(1)}
		+\tfrac{9 q^{26} P(2) P(5) P(3)^2 }{P(4)}
		\\&\quad
		+\tfrac{5 q^{39} P(1) P(2) P(3) P(4) }{P(5)}
		+\tfrac{11 q^{39} P(1)^2 P(6) P(3) }{P(4)}		
		+{\scriptstyle 4 q^{39} P(1)^2 P(4)}
		+\tfrac{11 q^{52} P(1)^2 P(2) P(3) }{P(6)}
	,\\
	B'_{13,2}(q^{13})
	&=
		\tfrac{6 P(5)^2 P(2)^2 P(4) }{P(1)^2}
		+\tfrac{6 P(5)^4 P(3) }{P(6) P(1)}
		+\tfrac{11 P(6)^2 P(3)^3 P(4) }{P(5) P(1) P(2)}
		+\tfrac{9 P(5) P(6)^2 P(3)^2 }{P(1) P(4)}
		+\tfrac{12 P(6)^2 P(2)^2 P(4)^2 }{P(5) P(1)^2}
		\\&\quad		
		+\tfrac{9 P(5) P(6) P(3) P(4) }{P(1)}
		+\tfrac{12 q^{13} P(5) P(6) P(2) P(4) }{P(3)}
		+\tfrac{6 q^{13} P(5)^2 P(6)^2 P(1) }{P(3) P(4)}
		+\tfrac{4 q^{13} P(1) P(5) P(3) P(4)^2 }{P(2)^2}
		\\&\quad
		+\tfrac{6 q^{13} P(6)^2 P(3) P(4) }{P(5)}
		+\tfrac{2 q^{13} P(6)^2 P(1) P(4) }{P(2)}
		+\tfrac{4 q^{13} P(2)^2 P(4)^3 }{P(1) P(3)}
		+\tfrac{10 q^{13} P(6) P(2)^2 P(4) }{P(1)}
		+\tfrac{12 q^{13} P(5) P(2) P(3)^2 }{P(1)}
		\\&\quad
		+\tfrac{q^{13} P(5)^2 P(4)^2 }{P(6)}
		+\tfrac{2 q^{13} P(3)^2 P(4)^2 }{P(2)}
		+\tfrac{q^{13} P(5) P(6)^2 P(2) }{P(4)}
		+{\scriptstyle 2 q^{13} P(5)^2 P(3)}
		+\tfrac{4 q^{26} P(2)^3 P(3) }{P(1)}
		\\&\quad
		+\tfrac{11 q^{26} P(2) P(3) P(4)^2 }{P(5)}
		+{\scriptstyle 12 q^{26} P(5) P(2)^2}
		+{\scriptstyle 6 q^{26} P(1) P(4)^2}
		+{\scriptstyle 4 q^{26} P(1) P(3) P(6)}
		+\tfrac{7 q^{39} P(1) P(2) P(3) P(4) }{P(6)}
		\\&\quad
		+\tfrac{7 q^{39} P(6) P(1) P(2)^2 }{P(5)}
		+\tfrac{8 q^{52} P(5) P(1)^2 P(2) P(3) }{P(6)^2}
	,\\
	B'_{13,3}(q^{13})
	&=
		\tfrac{4 P(5)^2 P(6)^2 }{P(3)}
		+\tfrac{7 P(5)^3 P(4) }{P(2)}
		+\tfrac{7 P(6)^3 P(3) }{P(2)}
		+\tfrac{10 P(6)^2 P(4)^2 }{P(2)}
		+\tfrac{4 P(5) P(6)^2 P(2) }{P(1)}
		+\tfrac{8 P(5)^2 P(3) P(4) }{P(1)}
		\\&\quad		
		+\tfrac{10 P(6) P(3)^3 P(4) }{P(1) P(2)}
		+\tfrac{6 P(5) P(6) P(2) P(4)^2 }{P(1) P(3)}
		+\tfrac{3 P(5) P(2) P(3)^2 P(4) }{P(1)^2}
		+\tfrac{12 P(5)^2 P(6) P(3)^2 }{P(1) P(4)}
		\\&\quad
		+\tfrac{9 q^{13} P(5) P(6)^2 P(2)^2 }{P(3)^2}
		+\tfrac{12 q^{13} P(5)^2 P(2) P(4) }{P(3)}		
		+\tfrac{7 q^{13} P(5)^3 P(3) }{P(6)}
		+\tfrac{3 q^{13} P(5) P(6) P(2)^2 P(3) }{P(1) P(4)}
		\\&\quad
		+\tfrac{9 q^{13} P(6) P(2) P(3)^2 P(4) }{P(5) P(1)}
		+\tfrac{12 q^{13} P(4)^3+q^13 P(5)^2 P(6) P(2) }{P(4)}
		+\tfrac{5 q^{13} P(6)^2 P(2)^3 }{P(1) P(3)}
		+\tfrac{2 q^{13} P(5) P(2)^2 P(4) }{P(1)}
		\\&\quad
		+\tfrac{3 q^{13} P(1) P(5) P(4)^3 }{P(2) P(3)}
		+\tfrac{2 q^{13} P(5) P(3)^2 P(4)^2 }{P(6) P(2)}
		+\tfrac{3 q^{13} P(1) P(5) P(6) P(4) }{P(2)}
		+{\scriptstyle 5 q^{13} P(6) P(3) P(4)}
		\\&\quad
		+\tfrac{q^{26} P(2) P(3) P(4)^2 }{P(6)}
		+\tfrac{5 q^{26} P(6)^2 P(1) P(2) }{P(4)}
		+\tfrac{5 q^{26} P(5) P(1) P(4)^2 }{P(6)}
		+\tfrac{9 q^{26} P(6) P(2)^2 P(4) }{P(5)}
		+{\scriptstyle 10 q^{26} P(1) P(3) P(5)}
		\\&\quad
		+{\scriptstyle 3 q^{26} P(2) P(3)^2}
		+\tfrac{2 q^{39} P(6) P(1)^2 P(3) }{P(5)}
		+\tfrac{7 q^{39} P(5) P(1) P(2) P(3) P(4) }{P(6)^2}
		+\tfrac{8 q^{39} P(5) P(1) P(2) P(3)^2 }{P(6) P(4)}
		\\&\quad
		+{\scriptstyle 11 q^{39} P(1) P(2)^2}
		+\tfrac{2 q^{52} P(1)^2 P(2) P(3) P(4) }{P(5) P(6)}
	,\\
	B'_{13,4}(q^{13})
	&=
		\tfrac{3 P(5) P(6)^2 P(2)^2 P(4) }{P(1) P(3)^2}
		+\tfrac{6 P(6) P(2) P(3)^2 P(4)^2 }{P(5) P(1)^2}
		+\tfrac{9 P(5) P(6) P(2)^2 P(3) }{P(1)^2}
		+\tfrac{4 P(5)^2 P(2) P(4)^2 }{P(1) P(3)}
		+\tfrac{9 P(6)^2 P(3)^2 }{P(1)}
		\\&\quad		
		+\tfrac{10 P(5) P(3)^3 P(4) }{P(1) P(2)}
		+\tfrac{12 P(5)^2 P(6) P(2) }{P(1)}
		+\tfrac{6 P(6) P(3) P(4)^2 }{P(1)}
		+\tfrac{3 P(5) P(6) P(4)^2 }{P(2)}
		+\tfrac{5 P(5) P(6)^2 P(3) }{P(2)}
		\\&\quad
		+\tfrac{12 q^{13} P(5) P(6) P(2)^3 }{P(1) P(3)}
		+\tfrac{q^{13} P(6) P(2)^2 P(4)^2 }{P(5) P(1)}
		+\tfrac{11 q^{13} P(5)^2 P(2)^2 P(4) }{P(6) P(1)}
		+\tfrac{4 q^{13} P(5)^2 P(2)^2 P(3) }{P(1) P(4)}
		\\&\quad
		+\tfrac{4 q^{13} P(6) P(3)^3 P(4) }{P(5) P(2)}
		+\tfrac{11 q^{13} P(2) P(6) P(4)^2 }{P(3)}
		+\tfrac{8 q^{13} P(5)^3 P(2) }{P(4)}
		+\tfrac{7 q^{13} P(5) P(6) P(3)^2 }{P(4)}
		+\tfrac{11 q^{13} P(5)^2 P(1) P(4) }{P(2)}
		\\&\quad
		+\tfrac{12 q^{13} P(2) P(3)^2 P(4) }{P(1)}
		+\tfrac{10 q^{13} P(5) P(6)^2 P(1) }{P(3)}
		+{\scriptstyle 4 q^{13} P(5) P(3) P(4)}
		+{\scriptstyle 5 q^{13} P(6)^2 P(2)}
		+\tfrac{3 q^{26} P(1) P(5) P(6) P(2) }{P(4)}
		\\&\quad
		+\tfrac{7 q^{26} P(6) P(1) P(3) P(4) }{P(5)}
		+\tfrac{7 q^{26} P(5)^2 P(1) P(3) }{P(6)}
		+\tfrac{11 q^{26} P(5) P(2) P(3)^2 }{P(6)}
		+\tfrac{6 q^{26} P(5)^2 P(1) P(4)^2 }{P(6)^2}
		\\&\quad
		+{\scriptstyle 12 q^{26} P(2)^2 P(4)}
		+\tfrac{10 q^{39} P(1) P(2) P(3)^2 }{P(5)}
		+\tfrac{5 q^{39} P(1) P(2) P(3) P(4)^2 }{P(5) P(6)}	
		+{\scriptstyle 9 q^{39} P(1)^2 P(3)}
	,\\
	B'_{13,5}(q^{13})
	&=
		\tfrac{10 P(5)^2 P(6) P(2) P(4) }{q^{13} P(1)^2}		
		+\tfrac{10 P(5) P(6)^3 }{P(4)}
		+\tfrac{P(5)^3 P(2) }{P(1)}
		+\tfrac{8 P(5) P(6)^2 P(4) }{P(3)}
		+\tfrac{5 P(5) P(6) P(3)^2 }{P(1)}
		\\&\quad		
		+\tfrac{4 P(5)^2 P(6) P(3) }{P(2)}
		+\tfrac{9 P(6) P(3)^2 P(4)^2 }{P(2)^2}
		+\tfrac{9 P(5)^2 P(2)^2 P(3) }{P(1)^2}
		+\tfrac{2 P(6)^2 P(2) P(4) }{P(1)}
		+\tfrac{9 P(6) P(2) P(3)^3 }{P(1)^2}
		\\&\quad
		+\tfrac{7 P(5) P(3) P(4)^2 }{P(1)}
		+\tfrac{4 P(5) P(6) P(2)^3 P(4) }{P(1)^2 P(3)}
		+\tfrac{5 q^{13} P(6)^2 P(2)^3 P(4) }{P(5) P(1) P(3)}
		+\tfrac{9 q^{13} P(5)^3 P(2)^2 P(3) }{P(6) P(1) P(4)}
		+\tfrac{q^{13} P(3)^3 P(4) }{P(2)}
		\\&\quad
		+\tfrac{11 q^{13} P(6)^2 P(1) P(3) }{P(2)}
		+\tfrac{11 q^{13} P(6)^2 P(1) P(3) }{P(2)}
		+\tfrac{4 q^{13} P(2)^2 P(4)^2 }{P(1)}
		+\tfrac{3 q^{13} P(5) P(2) P(4)^2 }{P(3)}
		\\&\quad
		+\tfrac{11 q^{13} P(6) P(3) P(4)^2 }{P(5)}
		+\tfrac{7 q^{13} P(5)^2 P(3) P(4) }{P(6)}
		+\tfrac{12 q^{13} P(6) P(1) P(4)^2 }{P(2)}
		+\tfrac{5 q^{13} P(6)^2 P(3)^2 }{P(5)}
		+\tfrac{5 q^{13} P(5)^2 P(3)^2 }{P(4)}
		\\&\quad
		+\tfrac{4 q^{13} P(6) P(2)^2 P(3) }{P(1)}
		+{\scriptstyle 6 q^{13} P(5) P(6) P(2)}
		+\tfrac{8 q^{26} P(1) P(4)^3 }{P(6)}
		+\tfrac{2 q^{26} P(5) P(2)^2 P(4) }{P(6)}
		+\tfrac{9 q^{26} P(5) P(2)^2 P(3) }{P(4)}
		\\&\quad
		+\tfrac{12 q^{26} P(2) P(3)^2 P(4) }{P(5)}
		+\tfrac{q^{26} P(6) P(1) P(3)^2 }{P(4)}
		+{\scriptstyle 12 q^{26} P(1) P(3) P(4)}
		+\tfrac{8 q^{39} P(1) P(2)^2 P(4) }{P(5)}
		\\&\quad
		+\tfrac{6 q^{39} P(5) P(1)^2 P(3) }{P(6)}
	,\\
	B'_{13,6}(q^{13})
	&=
		\tfrac{4 P(5) P(6) P(3)^2 P(4) }{q^{13} P(1)^2}
		+\tfrac{9 P(6)^2 P(2) P(4)^2 }{q^{13} P(1)^2}
		+\tfrac{11 P(5) P(6)^2 P(2) P(3) }{P(1) P(4)}
		+\tfrac{11 P(5)^2 P(3)^2 }{P(1)}
		+\tfrac{P(6) P(4)^3 }{P(2)}
		\\&\quad		
		+\tfrac{7 P(5)^2 P(6)^2 }{P(4)}
		+\tfrac{P(5) P(6)^3 P(2) }{P(3)^2}
		+\tfrac{3 P(5)^2 P(6) P(4) }{P(3)}
		+\tfrac{2 P(5)^3 P(4)^2 }{P(6) P(2)}
		+\tfrac{11 P(3)^3 P(4)^2 }{P(1) P(2)}
		+\tfrac{8 P(6)^2 P(3) P(4) }{P(2)}
		\\&\quad
		+\tfrac{P(5) P(6)^2 P(1) P(4) }{P(2)^2}
		+\tfrac{4 P(6) P(2)^2 P(3) P(4) }{P(1)^2}
		+\tfrac{12 P(5)^3 P(2)^2 P(3) }{P(6) P(1)^2}
		+\tfrac{9 P(5) P(2) P(4)^3 }{P(1) P(3)}
		+\tfrac{11 P(6)^2 P(3)^2 P(4) }{P(5) P(1)}	
		\\&\quad
		+\tfrac{6 q^{13} P(6) P(2)^3 P(4) }{P(1) P(3)}
		+\tfrac{8 q^{13} P(5) P(6)^2 P(2)^2 }{P(3) P(4)}
		+\tfrac{3 q^{13} P(2) P(3)^2 P(4)^2 }{P(5) P(1)}
		+\tfrac{8 q^{13} P(5)^2 P(6) P(2) P(3) }{P(4)^2}
		\\&\quad
		+\tfrac{6 q^{13} P(5) P(6) P(1) P(3) }{P(2)}
		+\tfrac{2 q^{13} P(6)^3 P(1) }{P(4)}
		+\tfrac{q^{13} P(6)^2 P(2) P(4) }{P(5)}
		+\tfrac{8 q^{13} P(5) P(1) P(4)^2 }{P(2)}
		+\tfrac{4 q^{13} P(5) P(2)^2 P(3) }{P(1)}
		\\&\quad
		+{\scriptstyle 11 q^{13} P(6) P(3)^2}
		+{\scriptstyle 7 q^{13} P(3) P(4)^2}
		+\tfrac{3 q^{26} P(5) P(1) P(3) P(4) }{P(6)}
		+\tfrac{q^{26} P(2)^2 P(4)^2 }{P(5)}
		+{\scriptstyle 2 q^{26} P(6) P(1) P(2)}
		\\&\quad
		+\tfrac{5 q^{39} P(5) P(1)^2 P(2) }{P(4)}
		+\tfrac{2 q^{39} P(1) P(2)^2 P(4) }{P(6)}
		+\tfrac{6 q^{39} P(1) P(2)^2 P(3) }{P(4)}
	,\\
	B'_{13,7}(q^{13})
	&=
		\tfrac{12 P(5)^2 P(6)^2 }{q^{13} P(1)}
		+\tfrac{P(6)^2 P(3)^2 P(4)^2 }{q^{13} P(5) P(1)^2}
		+\tfrac{9 P(3) P(4)^3 }{P(1)}
		+\tfrac{5 P(5)^2 P(2) P(4) }{P(1)}
		+\tfrac{2 P(6) P(2)^3 P(4)^2 }{P(1)^2 P(3)}
		\\&\quad		
		+\tfrac{P(5) P(2)^2 P(3) P(4) }{P(1)^2}
		+\tfrac{12 P(6) P(3)^2 P(4) }{P(1)}
		+\tfrac{P(5)^3 P(3)^2 }{P(6) P(1)}
		+\tfrac{5 P(5) P(6)^2 P(2)^2 }{P(1) P(3)}
		+\tfrac{9 P(6)^2 P(2) P(4)^2 }{P(5) P(1)}
		\\&\quad
		+\tfrac{6 P(5) P(6) P(3) P(4) }{P(2)}
		+\tfrac{P(5) P(6) P(2)^2 P(3)^2 }{P(1)^2 P(4)}
		+\tfrac{4 P(5)^2 P(6) P(2) P(3) }{P(1) P(4)}
		+\tfrac{6 q^{13} P(2) P(4)^3 }{P(3)}
		+\tfrac{9 q^{13} P(5)^2 P(2)^2 P(3) }{P(6) P(1)}
		\\&\quad
		+\tfrac{3 q^{13} P(5) P(6) P(1) P(4) }{P(3)}
		+\tfrac{11 q^{13} P(5)^2 P(1) P(4)^2 }{P(6) P(2)}
		+\tfrac{9 q^{13} P(2) P(3)^3 }{P(1)}
		+\tfrac{2 q^{13} P(6)^2 P(2) P(3) }{P(4)}
		\\&\quad
		+\tfrac{3 q^{13} P(5) P(3) P(4)^2 }{P(6)}
		+\tfrac{3 q^{13} P(5) P(6)^2 P(1) }{P(4)}
		+\tfrac{2 q^{13} P(6)^2 P(1) P(3) P(4) }{P(5) P(2)}
		+{\scriptstyle 8 q^{13} P(5) P(3)^2}
		+{\scriptstyle 9 q^{13} P(6) P(2) P(4)}
		\\&\quad
		+\tfrac{2 q^{26} P(5)^2 P(1) P(3) P(4) }{P(6)^2}
		+\tfrac{2 q^{26} P(1) P(3) P(4)^2 }{P(5)}
		+\tfrac{8 q^{26} P(6) P(1) P(3)^2 }{P(5)}
		+\tfrac{5 q^{26} P(2)^2 P(4)^2 }{P(6)}
		+{\scriptstyle 2 q^{26} P(2)^2 P(3)}
		\\&\quad
		+{\scriptstyle 12 q^{26} P(5) P(1) P(2)}		
		+\tfrac{10 q^{39} P(1)^2 P(3) P(4) }{P(6)}
		+\tfrac{4 q^{39} P(5) P(1) P(2)^2 P(3) }{P(6) P(4)}
	,\\
	B'_{13,8}(q^{13})
	&=
		\tfrac{3 P(6)^3 P(4) }{q^{13} P(1)}
		+\tfrac{10 P(5)^2 P(6) P(2) P(3) }{q^{13} P(1)^2}
		+\tfrac{6 P(6) P(2)^2 P(3) P(4)^2 }{P(5) P(1)^2}
		+\tfrac{12 P(6) P(3)^4 P(4) }{P(5) P(1) P(2)}
		+\tfrac{10 P(5)^2 P(6) P(2)^2 }{P(1) P(3)}
		\\&\quad		
		+\tfrac{10 P(5)^2 P(2)^2 P(3)^2 }{P(1)^2 P(4)}
		+\tfrac{9 P(5)^3 P(2) P(4) }{P(6) P(1)}
		+\tfrac{3 P(5)^2 P(3) P(4) }{P(2)}
		+\tfrac{10 P(5) P(6) P(2)^3 }{P(1)^2}
		+\tfrac{3 P(6) P(2) P(4)^2 }{P(1)}
		\\&\quad
		+\tfrac{4 P(6)^2 P(2) P(3) }{P(1)}
		+\tfrac{3 P(5) P(6) P(4)^2 }{P(3)}
		+\tfrac{4 P(5) P(3)^2 P(4) }{P(1)}
		+\tfrac{5 P(5)^3 P(2) P(3) }{P(1) P(4)}
		+\tfrac{8 P(5) P(6) P(3)^3 }{P(1) P(4)}
		\\&\quad
		+\tfrac{10 P(6)^2 P(3) P(4)^2 }{P(5) P(2)}
		+\tfrac{11 P(5)^2 P(6)^2 P(1) }{P(2) P(3)}		
		+{\scriptstyle 9 P(5) P(6)^2}
		+\tfrac{8 q^{13} P(5) P(2) P(4)^3 }{P(6) P(3)}
		+\tfrac{4 q^{13} P(5) P(6) P(2) P(3) }{P(4)}
		\\&\quad
		+\tfrac{3 q^{13} P(6) P(1) P(3) P(4) }{P(2)}
		+\tfrac{2 q^{13} P(6) P(2)^3 P(4)^2 }{P(5) P(1) P(3)}
		+\tfrac{8 q^{13} P(6)^2 P(2)^2 }{P(3)}
		+\tfrac{2 q^{13} P(5)^2 P(3)^2 }{P(6)}
		+\tfrac{6 q^{13} P(6) P(3)^2 P(4) }{P(5)}
		\\&\quad
		+\tfrac{8 q^{13} P(2)^2 P(3) P(4) }{P(1)}
		+\tfrac{7 q^{13} P(5)^2 P(1) P(4) }{P(3)}
		+{\scriptstyle 2 q^{13} P(5) P(2) P(4)}
		+\tfrac{11 q^{26} P(5)^2 P(1) P(2) }{P(6)}
		+\tfrac{6 q^{26} P(1) P(3) P(4)^2 }{P(6)}
		\\&\quad
		+\tfrac{11 q^{26} P(5) P(2)^2 P(3) }{P(6)}
		+{\scriptstyle 12 q^{26} P(1) P(3)^2}
		+\tfrac{10 q^{39} P(1) P(2)^2 P(3) }{P(5)}
		+{\scriptstyle 4 q^{39} P(1)^2 P(2)}
	,\\
	B'_{13,9}(q^{13})
	&=
		\tfrac{3 P(6)^2 P(2) P(3) P(4) }{q^{13} P(1)^2}
		+\tfrac{3 P(5)^3 P(2) P(3) }{q^{13} P(1)^2}
		+\tfrac{7 P(5) P(6) P(3)^3 }{q^{13} P(1)^2}
		+\tfrac{11 P(5) P(6)^3 P(2) }{P(3) P(4)}
		+\tfrac{10 P(6) P(3)^2 P(4)^2 }{P(5) P(1)}
		\\&\quad		
		+\tfrac{3 P(5) P(6)^2 P(2) P(4) }{P(3)^2}
		+\tfrac{8 P(5) P(6) P(2) P(3) }{P(1)}
		+\tfrac{9 P(5)^2 P(2)^3 }{P(1)^2}
		+\tfrac{5 P(6)^2 P(3)^2 }{P(2)}
		+\tfrac{11 P(5)^2 P(6)^2 P(3) }{P(4)^2}
		\\&\quad
		+\tfrac{10 P(6) P(3) P(4)^2 }{P(2)}
		+\tfrac{9 P(6) P(2)^2 P(3)^2 }{P(1)^2}
		+\tfrac{P(5) P(2) P(4)^2 }{P(1)}
		+\tfrac{5 P(5)^3 P(3) P(4) }{P(6) P(2)}
		+\tfrac{3 P(5) P(6) P(1) P(4)^2 }{P(2)^2}
		\\&\quad
		+\tfrac{5 P(5)^2 P(3)^2 P(4) }{P(6) P(1)}
		+\tfrac{3 P(6)^2 P(2)^2 P(4) }{P(1) P(3)}		
		+{\scriptstyle 6 P(5)^2 P(6)}
		+\tfrac{q^{13} P(5) P(2)^2 P(3)^2 }{P(1) P(4)}
		+\tfrac{q^{13} P(5) P(1) P(4)^3 }{P(6) P(2)}
		\\&\quad
		+\tfrac{7 q^{13} P(2)^3 P(4)^2 }{P(1) P(3)}
		+\tfrac{5 q^{13} P(5) P(6) P(2)^2 }{P(3)}
		+\tfrac{8 q^{13} P(6) P(2) P(4)^2 }{P(5)}
		+\tfrac{11 q^{13} P(5)^2 P(2) P(4) }{P(6)}
		+\tfrac{5 q^{13} P(6)^2 P(2) P(3) }{P(5)}
		\\&\quad
		+\tfrac{10 q^{13} P(5)^2 P(2) P(3) }{P(4)}
		+\tfrac{8 q^{13} P(5) P(2)^2 P(3) P(4) }{P(6) P(1)}
		+\tfrac{8 q^{13} P(6) P(2)^3 }{P(1)}
		+\tfrac{5 q^{13} P(5) P(1) P(3) P(4) }{P(2)}
		+{\scriptstyle 7 q^{13} P(3)^2 P(4)}
		\\&\quad
		+{\scriptstyle 5 q^{13} P(6)^2 P(1)}
		+\tfrac{q^{26} P(2)^2 P(3) P(4) }{P(5)}
		+\tfrac{12 q^{26} P(5) P(1) P(3)^2 }{P(6)}
		+\tfrac{4 q^{26} P(6) P(1) P(2) P(3) }{P(4)}
		+{\scriptstyle q^{26} P(1) P(2) P(4)}
		\\&\quad
		+\tfrac{4 q^{39} P(1) P(2)^2 P(3) }{P(6)}
	,\\
	B'_{13,10}(q^{13})
	&=
		\tfrac{10 P(5)^2 P(6) P(4) }{q^{13} P(1)}
		+\tfrac{8 P(5) P(6) P(2) P(3) P(4) }{q^{13} P(1)^2}
		+\tfrac{8 P(5)^2 P(6)^2 P(3) }{q^{13} P(1) P(4)}
		+\tfrac{3 P(5) P(6) P(2)^2 P(4) }{P(1) P(3)}
		\\&\quad		
		+\tfrac{10 P(6)^2 P(2) P(3) P(4) }{P(5) P(1)}
		+\tfrac{11 P(5)^2 P(6) P(2) P(3)^2 }{P(1) P(4)^2}
		+\tfrac{10 P(5) P(6)^2 P(1) P(4) }{P(2) P(3)}
		+\tfrac{11 P(5) P(6)^2 P(2)^2 }{P(1) P(4)}
		\\&\quad
		+\tfrac{P(5)^2 P(2) P(4)^2 }{P(6) P(1)}
		+\tfrac{8 P(5) P(6)^3 P(1) }{P(2) P(4)}
		+\tfrac{12 P(6) P(2)^3 P(4) }{P(1)^2}
		+\tfrac{4 P(5) P(3) P(4)^2 }{P(2)}
		+\tfrac{10 P(5) P(6) P(3)^2 }{P(2)}
		\\&\quad
		+\tfrac{8 P(3)^2 P(4)^2 }{P(1)}
		+\tfrac{10 P(6) P(3)^3 }{P(1)}
		+{\scriptstyle P(5)^3}
		+{\scriptstyle 10 P(6)^2 P(4)}		
		+\tfrac{3 q^{13} P(5) P(1) P(4)^2 }{P(3)}
		+\tfrac{2 q^{13} P(5) P(3)^2 P(4) }{P(6)}
		\\&\quad
		+\tfrac{q^{13} P(5) P(2)^3 }{P(1)}
		+\tfrac{10 q^{13} P(5)^2 P(2)^2 P(3)^2 }{P(6) P(1) P(4)}
		+\tfrac{8 q^{13} P(5)^2 P(1) P(3) P(4) }{P(6) P(2)}
		+\tfrac{8 q^{13} P(2)^2 P(3) P(4)^2 }{P(5) P(1)}
		\\&\quad
		+{\scriptstyle 3 q^{13} P(6) P(2) P(3)}
		+{\scriptstyle 2 q^{13} P(5) P(6) P(1)}
		+{\scriptstyle 12 q^{13} P(2) P(4)^2}
		+\tfrac{10 q^{26} P(2)^2 P(3) P(4) }{P(6)}
		+\tfrac{3 q^{26} P(6) P(1) P(2)^2 }{P(3)}
		\\&\quad
		+\tfrac{3 q^{26} P(1)^2 P(3) P(4) }{P(2)}
		+\tfrac{4 q^{26} P(5) P(1) P(2) P(4) }{P(6)}
		+\tfrac{8 q^{26} P(5) P(1) P(2) P(3) }{P(4)}
		+\tfrac{3 q^{26} P(2)^2 P(3)^2 }{P(4)}
		\\&\quad
		+\tfrac{6 q^{39} P(6) P(1)^2 P(2) P(3) }{P(5) P(4)}
	,\\	
	B'_{13,11}(q^{13})
	&=
		\tfrac{6 P(6)^2 P(4)^2 }{q^{13} P(1)}
		+\tfrac{7 P(5)^3 P(4) }{q^{13} P(1)}
		+\tfrac{8 P(5) P(6) P(2)^2 P(4)^2 }{q^{13} P(1)^2 P(3)}
		+\tfrac{P(6)^2 P(2) P(3) P(4)^2 }{q^{13} P(5) P(1)^2}
		+\tfrac{8 P(5)^2 P(6) P(2) P(3)^2 }{q^{13} P(1)^2 P(4)}
		\\&\quad		
		+\tfrac{7 P(5)^2 P(6) P(1) P(4) }{P(2) P(3)}
		+\tfrac{6 P(5)^2 P(6) P(2)^2 }{P(1) P(4)}
		+\tfrac{9 P(6) P(2) P(3) P(4) }{P(1)}
		+\tfrac{6 P(5)^3 P(2) P(3)^2 }{P(1) P(4)^2}
		+\tfrac{9 P(5)^2 P(3) P(4)^2 }{P(6) P(2)}
		\\&\quad
		+\tfrac{P(6)^2 P(3)^2 P(4) }{P(5) P(2)}
		+\tfrac{10 P(5) P(2)^3 P(4) }{P(1)^2}
		+\tfrac{11 P(5) P(6)^2 P(3) }{P(4)}
		+\tfrac{4 P(2) P(4)^3 }{P(1)}
		+\tfrac{11 P(6)^3 P(2) }{P(3)}
		+\tfrac{8 P(5) P(4)^3 }{P(3)}
		\\&\quad
		+\tfrac{7 P(5) P(3)^3 }{P(1)}		
		+{\scriptstyle 2 P(5) P(6) P(4)}
		+\tfrac{7 q^{13} P(5) P(2) P(4)^2 }{P(6)}
		+\tfrac{11 q^{13} P(1) P(3) P(4)^2 }{P(2)}
		+\tfrac{10 q^{13} P(2)^2 P(3)^2 }{P(1)}
		\\&\quad
		+\tfrac{6 q^{13} P(3)^2 P(4)^2 }{P(5)}
		+\tfrac{9 q^{13} P(6) P(2)^3 P(4) }{P(5) P(1)}
		+{\scriptstyle 9 q^{13} P(5) P(2) P(3)}
		+{\scriptstyle 10 q^{13} P(5)^2 P(1)}
		+\tfrac{8 q^{26} P(1) P(3)^2 P(4) }{P(6)}
		\\&\quad
		+\tfrac{2 q^{26} P(6) P(1) P(2) P(3) }{P(5)}
		+{\scriptstyle 4 q^{26} P(2)^3}
		+\tfrac{q^{39} P(1)^2 P(2) P(3) }{P(4)}
		+\tfrac{q^{39} P(1) P(2)^2 P(3) P(4) }{P(5) P(6)}
	,\\
	B'_{13,12}(q^{13})
	&=
		\tfrac{P(5) P(6)^2 P(3) }{q^{13} P(1)}
		+\tfrac{4 P(6)^2 P(3)^2 P(4)^2 }{q^{13} P(5) P(1) P(2)}
		+\tfrac{11 P(6) P(2) P(3) P(4)^2 }{q^{13} P(1)^2}
		+\tfrac{2 P(5)^3 P(2) P(3)^2 }{q^{13} P(1)^2 P(4)}
		+\tfrac{8 P(5)^2 P(3)^2 P(4) }{q^{13} P(1) P(2)}
		\\&\quad		
		+\tfrac{8 P(5) P(6) P(2) P(3)^2 }{P(1) P(4)}
		+\tfrac{7 P(6) P(2)^3 P(4)^2 }{P(5) P(1)^2}
		+\tfrac{P(6) P(2)^2 P(4)^2 }{P(1) P(3)}
		+\tfrac{12 P(5) P(6) P(2) P(4)^2 }{P(3)^2}
		+\tfrac{7 P(5) P(2) P(3) P(4) }{P(1)}
		\\&\quad
		+\tfrac{3 P(6) P(3)^3 P(4) }{P(5) P(1)}
		+\tfrac{2 P(5) P(6)^2 P(2) }{P(3)}
		+\tfrac{3 P(5)^2 P(4)^3 }{P(6) P(3)}
		+\tfrac{3 P(5)^2 P(6) P(3) }{P(4)}
		+\tfrac{6 P(6) P(3)^2 P(4) }{P(2)}
		\\&\quad
		+\tfrac{12 P(5) P(1) P(4)^3 }{P(2)^2}
		+\tfrac{5 P(6)^3 P(1) }{P(2)}
		+\tfrac{7 P(6)^2 P(2)^2 }{P(1)}
		+{\scriptstyle 11 P(5)^2 P(4)	}
		+\tfrac{6 q^{13} P(5) P(2)^2 P(4) }{P(3)}
		+\tfrac{5 q^{13} P(5) P(6) P(2)^2 }{P(4)}
		\\&\quad		
		+\tfrac{10 q^{13} P(5)^2 P(2) P(3) }{P(6)}
		+\tfrac{11 q^{13} P(2)^3 P(4) }{P(1)}
		+\tfrac{7 q^{13} P(3)^2 P(4)^2 }{P(6)}
		+\tfrac{2 q^{13} P(5)^2 P(6) P(1) P(2) }{P(3) P(4)}
		\\&\quad
		+\tfrac{9 q^{13} P(5) P(1) P(3) P(4)^2 }{P(6) P(2)}
		+\tfrac{8 q^{13} P(6) P(2) P(3) P(4) }{P(5)}
		+{\scriptstyle 12 q^{13} P(6) P(1) P(4)}
		+\tfrac{3 q^{26} P(1) P(2) P(4)^2 }{P(6)}
		\\&\quad
		+\tfrac{9 q^{26} P(2)^2 P(3) P(4)^2 }{P(5) P(6)}
		+{\scriptstyle 4 q^{26} P(1) P(2) P(3)}
		+{\scriptstyle 7 q^{26} P(5) P(1)^2}
		+\tfrac{3 q^{39} P(1) P(2)^2 P(3)^2 }{P(6) P(4)}
	.	
\end{align*}
To finish the proof of (\ref{EqTheoremVMod13}) we must show that
$B'_{13}\equiv B_{13}\pmod{13}$. We do this by reducing each $B'_{13,i}$ modulo 
$13$ in the same way that we reduced the $A'_{13,i}$ in the proof of 
(\ref{EqTheoremUMod13}). Doing so gives that the $B'_{13,i}$ reduce modulo $13$
to the $B_{13,i}$ defined in Theorem \ref{TheoremMain}.

\end{proof}

\section{Remarks}

These functions satisfy additional congruences. For example, it appears that
\begin{align*}	
	u(9n) \equiv 0 \pmod{9}
	,\\	
	v(9n+1) \equiv 0 \pmod{9}
	,\\
	v(27n+1) \equiv 0 \pmod{27}
	.
\end{align*}
One could use the methods of this article to prove these congruences, however 
the difficulty in using these methods when $\ell$ is not a prime is that we can
not as easily reduce $E(1)^{-3}$ modulo $\ell$. In particular, with $\ell=9$,
it is not the case that $\frac{1}{E(1)^3}\equiv\frac{E(1)^6}{E(9)}$, but rather
we instead have from $\frac{E(1)^9}{E(3)^3}\equiv 1\pmod{9}$ that
\begin{align*}
	\frac{1}{E(1)^3}\equiv \frac{E(1)^6 E(3)^6}{E(9)^3}\pmod{9}.
\end{align*}
One would first have to find a short representation of the $9$-dissection
taken modulo $9$, or simply deal with the bothersome number of terms introduced by
expanding this with (\ref{EqECubedModL}). One could also verify the required 
terms are zero modulo $\ell$ by
viewing the eta quotients and generalized eta quotients as modular forms,
however that is also rather computational proof.

\bibliographystyle{abbrv}
\bibliography{partitionQuadruplesWithMod13Ref}

\begin{thebibliography}{10}

\bibitem{AndrewsBook}
G.~E. Andrews.
\newblock {\em The theory of partitions}.
\newblock Addison-Wesley Publishing Co., Reading, Mass.-London-Amsterdam, 1976.
\newblock Encyclopedia of Mathematics and its Applications, Vol. 2.

\bibitem{Andrews1}
G.~E. Andrews.
\newblock {\em {$q$}-series: their development and application in analysis,
  number theory, combinatorics, physics, and computer algebra}, volume~66 of
  {\em CBMS Regional Conference Series in Mathematics}.
\newblock Published for the Conference Board of the Mathematical Sciences,
  Washington, DC; by the American Mathematical Society, Providence, RI, 1986.

\bibitem{AS}
A.~O.~L. Atkin and P.~Swinnerton-Dyer.
\newblock Some properties of partitions.
\newblock {\em Proc. London Math. Soc. (3)}, 4:84--106, 1954.

\bibitem{Berndt}
B.~C. Berndt.
\newblock {\em Ramanujan's notebooks. {P}art {III}}.
\newblock Springer-Verlag, New York, 1991.

\bibitem{BilgiciEkin}
G.~Bilgici and A.~B. Ekin.
\newblock Some congruences for modulus 13 related to partition generating
  function.
\newblock {\em Ramanujan J.}, 33(2):197--218, 2014.

\bibitem{BLO1}
K.~Bringmann, J.~Lovejoy, and R.~Osburn.
\newblock Automorphic properties of generating functions for generalized rank
  moments and {D}urfee symbols.
\newblock {\em Int. Math. Res. Not. IMRN}, (2):238--260, 2010.

\bibitem{Chan}
S.~H. Chan.
\newblock Generalized {L}ambert series identities.
\newblock {\em Proc. London Math. Soc. (3)}, 91(3):598--622, 2005.

\bibitem{Ekin}
A.~B. Ekin.
\newblock Some properties of partitions in terms of crank.
\newblock {\em Trans. Amer. Math. Soc.}, 352(5):2145--2156, 2000.

\bibitem{HickersonMortenson}
D.~R. Hickerson and E.~T. Mortenson.
\newblock Hecke-type double sums, {A}ppell-{L}erch sums, and mock theta
  functions, {I}.
\newblock {\em Proc. Lond. Math. Soc. (3)}, 109(2):382--422, 2014.

\bibitem{Lewis}
R.~Lewis.
\newblock Relations between the rank and the crank modulo {$9$}.
\newblock {\em J. London Math. Soc. (2)}, 45(2):222--231, 1992.

\bibitem{LewisSantaGadea}
R.~Lewis and N.~Santa-Gadea.
\newblock On the rank and the crank modulo {$4$} and {$8$}.
\newblock {\em Trans. Amer. Math. Soc.}, 341(1):449--465, 1994.

\bibitem{ObrienThesis}
J.~N. O{'}Brien.
\newblock {\em Some properties of partitions, with special reference to primes
  other than 5,7 and 11}.
\newblock PhD thesis, Durham University, 1965.

\bibitem{MolkTannery}
J.~Tannery and J.~Molk.
\newblock {\em \'{E}l\'ements de la th\'eorie des fonctions elliptiques.
  {T}omes {I}, {II}. {C}alcul diff\'erentiel}.
\newblock Chelsea Publishing Co., Bronx, N.Y., 1972.
\newblock R{\'e}impression en un volume de l'{\'e}dition en deux volumes de
  1893 et 1896.

\bibitem{Zwegers}
S.~P. Zwegers.
\newblock {\em Mock theta functions}.
\newblock PhD thesis, Universiteit Utrecht, 2002.

\end{thebibliography}

\end{document}